\theoremstyle{plain}
\newtheorem{thm}{Theorem}[section]
\newtheorem{defn}[thm]{Definition}
\newtheorem{rem}[thm]{Remark}
\newtheorem{prop}[thm]{Proposition}
\theoremstyle{definition}
\newtheorem{exa}[thm]{Example}
\DeclarePairedDelimiter{\norm}{\lVert}{\rVert}
\newcommand{\f}{\frac}
\newcommand{\ra}{\rightarrow}
\newcommand{\beq}{\begin{equation}}
\newcommand{\eeq}{\end{equation}}
\newcommand{\beqa}{\begin{eqnarray}}
\newcommand{\eeqa}{\end{eqnarray}}
\newcommand{\bit}{\begin{itemize}}
\newcommand{\eit}{\end{itemize}}
\newcommand{\bedef}{\begin{defn}}
\newcommand{\edefn}{\end{defn}}
\newcommand{\bpro}{\begin{prop}}
\newcommand{\epro}{\end{prop}}
\newcommand{\Dx}{\Delta x}
\newcommand{\Dt}{\Delta t}
\newcommand{\xL}{{x_{p-\frac{1}{2}}}}
\newcommand{\xR}{{x_{p+\frac{1}{2}}}}
\newcommand{\ep}{\epsilon}
\begin{document}

\baselineskip = 1.8pc


\begin{center}
{\bf Semi-Lagrangian nodal discontinuous Galerkin method for the BGK Model}
\end{center}

\vspace{.1in}
\centerline{
Mingchang Ding\footnote{Computational Mathematics Science and Engineering, Michigan State University, East Lansing, MI 48824, USA. E-mail: dingmin2@msu.edu.},
Jing-Mei Qiu\footnote{Department of Mathematical Sciences, University of Delaware, Newark, DE, 19716. E-mail: jingqiu@udel.edu. Research of the first and second author is supported by NSF grant NSF-DMS-1834686 and NSF-DMS-1818924, Air Force Office of Scientific Research FA9550-18-1-0257.
},
Ruiwen Shu\footnote{
Department of Mathematics, University of Maryland, College Park, MD, 20742. E-mail: rshu@cscamm.umd.edu}
}

\bigskip

\centerline{\bf Abstract}

In this paper, we propose an efficient, high order accurate and asymptotic-preserving (AP) semi-Lagrangian (SL) method for the BGK model with constant or spatially dependent Knudsen number. The spatial discretization is performed by a mass conservative nodal discontinuous Galerkin (NDG) method, while the temporal discretization of the stiff relaxation term is realized by stiffly accurate diagonally implicit Runge-Kutta (DIRK) methods along characteristics. Extra order conditions are enforced in \cite{ding2021semi} for asymptotic accuracy (AA) property of DIRK methods when they are coupled with a semi-Lagrangian algorithm in solving the BGK model. 
A local maximum principle preserving (LMPP) limiter is added to control numerical oscillations in the transport step. Thanks to the SL and implicit nature of time discretization, the time stepping constraint is relaxed and it is much larger than that from an Eulerian framework with explicit treatment of the source term. Extensive numerical tests are presented to verify the high order AA, efficiency and shock capturing properties of the proposed schemes.

\vfill

\noindent {\bf Keywords:}
BGK model; semi-Lagrangian (SL) method; discontinuous Galerkin (DG) method;  diagonally implicit Runge-Kutta (DIRK) method; local maximum principle preserving (LMPP) limiter; asymptotic-preserving (AP); asymptotic accuracy (AA).

\newpage

\section{Introduction}

In this paper, we propose a semi-Lagrangian (SL) nodal discontinuous Galerkin (DG) solver for the BGK equation. The BGK model was introduced by Bhatnagar, Gross and Krook \cite{bhatnagar1954model} as a relaxation model for the fundamental Boltzmann equation \cite{cercignani1988boltzmann}, which describes the kinetic dynamic of rarefied gases with a probability distribution function. The challenges of designing efficient numerical schemes for the Boltzmann equation mainly come from its high dimensionality and complicated nonlinear collision operator. The BGK model gains interests since it has much lower computational cost, due to the relatively simple structure of the relaxation operator in replacement of the collision operator; while it simultaneously preserves several important physical properties, such as the conservation of macroscopic quantities and dissipation of entropy. 

In the kinetic theory, the rarefaction degree of a gas is often measured using the dimensionless Knudsen number defined as $\ep = \lambda/L$ with mean free path $\lambda$ and macroscopic characteristic length $L$. Accordingly, $\ep$ can be any positive number and possibly spatially dependent. More specifically, when the collision frequency between particles is dominant, $\lambda$ gets small and the BGK model is in the limiting fluid regime with $\ep \ll 1$. Conversely, we say that the BGK model is in the kinetic regime with $\ep = O(1)$ for rarefied gases. Similar to the Boltzmann equation, when the Knudsen number $\ep$ approaches $0$, the kinetic model can be adequately described by the compressible Euler equations about observable macroscopic quantities. Given the multi-scale nature of the BGK model, it is of great interests to design asymptotic-preserving (AP) schemes so that we have consistent and high order solvers for the limiting macroscopic system without the need to resolve the small $O(\ep)$ scale \cite{jin1999efficient}.

Owing to its attractive features, there have been many research works studying the BGK model theoretically \cite{perthame1989global} and numerically \cite{pieraccini2007implicit, santagati2011new, groppi2014high, boscarino2019high, hu2018asymptotic}. One popular numerical method \cite{pieraccini2007implicit, hu2018asymptotic} is designed with implicit-explicit (IMEX) time discretization methods in the Eulerian framework. In these methods, the non-stiff convection part is treated explicitly and the stiff relaxation term is handled implicitly. In this way, the time step size can be chosen independent of $\ep$ but it still suffers from the Courant-Friedrichs-Lewy (CFL) type restriction for the transport part. In order to further relax the stringent time step constraint, SL schemes are proposed \cite{santagati2011new, groppi2014high, boscarino2019high}. The SL method is often designed via tracing information along the characteristics, thus avoiding the CFL type time step limitation and gaining extra computational efficiency. It becomes popular in different application domains such as climate modeling \cite{lin1996multidimensional, staniforth1991semi} and plasma simulations \cite{sonnendrucker1999semi}. To achieve high order accuracy in space, the SL method can be coupled with a variety of spatial discretizations, such as the finite difference (FD) method with weighted essentially non-oscillatory (WENO) reconstruction \cite{qiu2011conservative, boscarino2019high, santagati2011new}, the spectral element method \cite{giraldo2003spectral} and the DG method \cite{guo2014conservative, cai2017high, rossmanith2011positivity}. Compared with the DG method,  the FD method works with point values and offers better flexibility in performing integration in the velocity space. Yet it is much harder to achieve mass conservation for FD methods, leading to significant loss or gain of mass for under-resolved simulations \cite{santagati2011new, groppi2014high}. A mass conservative SL FD method is designed in \cite{boscarino2019high} by imposing an additional correction step, but it is subject to stability restrictions on the CFL number. On the other hand, the DG scheme \cite{cockburn2000development, cockburn2001runge} is well known for its $h$-$p$ adaptivity, flexibility in resolving problems with complex structures and high parallel efficiency. In order to take advantages of the flexibility in working with point values, we adopt the nodal DG (NDG) discretization where the solution is represented by grid points at Gaussian nodes on each DG element \cite{hesthaven2007nodal}. 

The focus of our paper is to develop a new class of mass conservative, asymptotically accurate SL NDG method coupled with diagonally implicit Runge-Kutta (DIRK) schemes along characteristics for the BGK model. We first propose a  mass conservative SL NDG method based on the moment-based SLDG scheme for linear transport equations \cite{cai2017high}. A new local maximum principle preserving (LMPP) limiter, is added to the SL NDG solver to control numerical oscillations without affecting high order spatial accuracy. High order temporal accuracy is achieved with stiffly accurate DIRK methods along the dynamic characteristic elements. The employment of stiffly accurate DIRK methods guarantee the AP property of the scheme in the limiting fluid regime. However, the asymptotic accuracy (AA) property does not hold in general. In fact, the numerical results in \cite{santagati2011new} indicate that a classical 3-stage third order DIRK (DIRK3) method \cite{calvo2001linearly} only achieves second order accuracy in the limiting fluid regime. In \cite{ding2021semi}, we perform theoretical analysis on the AA of the SL DIRK scheme for solving the BGK model and derive additional order conditions to be satisfied; we then construct several DIRK methods and analyze their stability properties. In this paper, we use a $4$-stage DIRK3 method proposed in \cite{ding2021semi} for consistent third order accuracy in both kinetic and fluid regimes. Due to the implicit trait of DIRK methods together with the SL nature, the time step size can be chosen independent of $\ep$ and found to be larger than that in an Eulerian framework.

The rest of the paper is organized as follows. In Section~\ref{sec:bgk_model}, we recall the BGK model with several important physical properties. Section~\ref{sec:scheme} is devoted to the proposed SL NDG-DIRK scheme. In this section, we first describe the SL NDG method together with the LMPP limiter for the pure linear transport part and then discuss the DIRK methods for the BGK relaxation operator. In Section~\ref{sec:test}, we demonstrate the high order accuracy, mass conservation and AP and AA properties of the schemes through several numerical experiments. Conclusions are given in Section~\ref{sec6}.

\section{The BGK model}
\label{sec:bgk_model}

The considered BGK model reads as
\begin{equation} \label{eq: bgk}
\partial_{t}f + \mathbf{v}\cdot \nabla _{\mathbf{x}}f =  \frac{1}{\ep}(M_U-f)
\end{equation}
where $f=f(\mathbf{x}, \mathbf{v}, t)$ is the probability distribution function of particles that depends on time $t > 0$, position $\mathbf{x}\in \Omega_x \subset \mathbb{R}^d$ and velocity $\mathbf{v} \in \mathbb{R}^d$ for $d\geq 1$. $M_U$ is  the local Maxwellian defined by
\begin{equation} 
\label{maxwellian}
M_U = M_U(\mathbf{x}, \mathbf{v}, t)=\frac{\rho(\mathbf{x}, t)}{(2\pi T(\mathbf{x}, t))^{d/2}}
\exp \left(-\frac{|\mathbf{v} - u(\mathbf{x}, t)|^2}{2T(\mathbf{x}, t)} \right).
\end{equation}
$\rho$, $u$, $T$ represent the macroscopic density, the mean velocity, and the temperature respectively. 
The macroscopic fields $U$ has the components of the density, momentum and energy, which are obtained by taking the first few moments of $f$:
\begin{equation}\label{eq_U_bgk}
U:=\left(
\rho,
\rho u,
E
\right)^\top = 
\langle f \phi \rangle
\end{equation}
with the vector of collision invariants
\[
\phi=\phi(\mathbf{v}):= \left( 1,\mathbf{v}, \frac12 |\mathbf{v}|^2 \right)^\top, \quad \langle g \rangle: = \int_{\mathbb{R}^d} g(\mathbf{v}) d\mathbf{v}.
\]
The total energy $E$ is related to $T$ through $E=\frac12\rho|u|^2 +\frac{d}{2}\rho T$. It is easy to check that $\langle M_U \phi  \rangle = U $. Hence 
\begin{equation} \label{eq:Mu_f}
\langle (M_U - f) \phi \rangle =0,
\end{equation}
namely the BGK operator satisfies the conservation of mass, momentum and energy. Moreover, it enjoys the entropy dissipation: $\langle (M_U-f) \log f\rangle \leq 0$.

\section{The SL NDG-DIRK scheme for the BGK model}
\label{sec:scheme}

We describe our algorithm on \eqref{eq: bgk} with 1D in physical space and 1D in velocity space only, while the extension to multi-dimensional problems is computationally intensive, yet in principle straightforward. We first introduce the SL NDG method with LMPP limiter for the transport part, then we introduce the time discretization along the material derivative using DIRK methods for the BGK relaxation operator, which is in the same spirit as that in \cite{groppi2014high}.


\subsection{The SL NDG method for the transport term}
\label{subsec:1D_linear}

We treat the linear transport term in \eqref{eq: bgk} with the SLDG method \cite{cai2017high} in a nodal form. We consider a 1D spatial domain $[x_a, x_b]$ discretized into $N_x$ elements: 
$x_a 	= x_{\frac{1}{2}} 	<x_{\frac{3}{2}} <	 \cdots < 	x_{N_x+\frac{1}{2}} = 	x_b, $
with $I_p = [\xL,\xR]$ denoting an element of length $\bigtriangleup x_{p}=\xR-\xL$ for $p=1, 2, \cdots N_x$. 
We let numerical solutions and test functions belong to the finite dimensional approximation space
\beq \label{eq2:3}
V^k_h =  \{v_h:v_h|_{I_p} 	\in P^{k}(I_p), p =1, 2, \cdots N_x \}
\eeq
where $P^k(I_p)$ denotes the set of polynomials of degree at most $k$ over $I_p$. Let $\Dt = t^{n+1}-t^n$ represent the time discretization step size and $f^n(x, v)\in V^k_h$ be the numerical solution at time $t^n$. Solutions of the DG method are often represented by modal values, i.e. coefficients for monomial or orthogonal polynomial basis; yet another representation of DG solutions is through its nodal values at Gaussian quadrature points with Lagrangian polynomial basis. The advantages of working with nodal values are the convenience to perform the integration~\eqref{eq_U_bgk} in phase space for a fixed $x$ and have a more convenient treatment of the spatially dependent $\ep(x)$.

We consider the model problem 
\beq
\label{eq:transport_only}
f_t + v f_x = 0, 
\eeq
and assume that its NDG solution at $t^n$ are  
\[
f^n_{p, i_p}(v)= f^n(x_{p, i_p}, v), \quad \ p=1, \cdots N_x, \ i_p = 1, \cdots k+1
\]
as function values at Gaussian quadrature points $\{x_{p, i_p}\}$ on each interval $I_p$ with velocity $v$. The subscription $p, i_p$ will be used in the same manner later. 
A straightforward way of updating NDG solution, is to directly evaluate the DG solution $f^n(x, v) \in V^k_h$ at upstream characteristic foot $x_{p, i_p} - v \Dt$ 
\beq
\label{eq: sldg1}
f^{n+1}_{p, i_p}(v) = f^n(x_{p, i_p} - v\Dt, v).
\eeq
Yet such a method does not preserve the total mass. Instead, we update NDG solution from $\{f^n_{p, i_p}(v)\}$ to $\{f^{n+1}_{p, i_p}(v)\}\ (\forall p, i_p)$ through the modal SLDG method \cite{rossmanith2011positivity, cai2017high}, summarized as {\bf Algorithm 1}:
\begin{enumerate}

\item [{\it Step 1}.] {\bf Nodal to modal at $t^n$.} With the given $k+1$ NDG values $\{f^n_{p, i_p}(v)\}$, the modal DG solution in the polynomial space can be represented as
\[
f^n(x, v) = \sum_{i_p=1}^{k+1} f^n_{p, i_p}(v) L_{p, i_p}(x), 
\] 
by collecting the coefficients for the Lagrangian basis function $L_{p, i_p}(x)$ at the corresponding $k+1$ Gaussian nodes on $I_p$.
\item [{\it Step 2}.] {\bf Update modal information at $t^{n+1}$.} We apply the modal SLDG method proposed in \cite{cai2017high} to update the DG solution $f^{n+1}(x, v)$. The main processes are briefly summarized as below and we refer to \cite{cai2017high} for more details.
	\begin{enumerate}
	
	\item[{\it (1)}] Consider the {\it adjoint problem} for the time dependent test function $\psi(x, t)$ satisfying	
                \beq	\label{adjoint_1D}
                \begin{cases}
		\psi_t + v \psi_x = 0,	\quad t\in [t^n, t^{n+1}];	\\ 
		\psi(x, t^{n+1}) = \Psi(x)  \in V^k_h.
                \end{cases}
                \eeq
		Then we have the weak formulation
		\beq	\label{eq:sldg_weak}
	 	\int_{I_p} f^{n+1}(x, v) \Psi(x)\ dx = \int_{I^{n+1,n}_p } f^n(x, v) \Psi^{n+1, n}(x) \ dx
	 	\eeq
where $I^{n+1,n}_p = [x^{n+1,n}_{p-\f{1}{2}}, x^{n+1,n}_{p+\f{1}{2}}]$ is the upstream interval located by tracking along characteristic curves emanating from $x_{p \pm \f{1}{2}}$ at $t^{n+1}$ backward in time to $t^n$, see Figure~\ref{schematic_1d}. $x^{n+1, n}_{p \pm \f{1}{2}}$ can be easily computed by $x_{p \pm \f{1}{2}} - v \Dt$. $\Psi^{n+1, n}(x)$ is the solution to the adjoint problem \eqref{adjoint_1D} on the upstream interval $I^{n+1, n}_p$ with $\Psi^{n+1, n}(x) = \psi(x, t^n) = \Psi(x + v\Dt)$.

	\item[{\it (2)}] {\it Integrating $\int_{I^{n+1,n}_p } f^n(x, v) \Psi^{n+1, n}(x) \ dx$ by summation over subintervals. }From Figure~\ref{schematic_1d}, we see there are two intersections $I^{n+1,n}_{p,1} = [x^{n+1, n}_{p-\f{1}{2}}, x_{p-\f{1}{2}}]$ and $I^{n+1,n}_{p, 2} = [x_{p-\f{1}{2}}, x^{n+1, n}_{p+\f{1}{2}}]$ between $I^{n+1, n}_p$ and the background Eulerian elements $I_{p-1}$ and $I_p$. Equivalently, we have $I^{n+1,n}_p = I^{n+1,n}_{p,1} \cup I^{n+1,n}_{p,2}$. The integration over the upstream cell $I^{n+1, n}_p$ is thus can be approximated by
	        	\beq
        		 \int_{I^{n+1,n}_p } f^n(x, v) \Psi^{n+1, n}(x) \ dx \approx \sum^2_{l=1} \int_{I^{n+1, n}_{p, l}} f^n(x, v) \Psi^{n+1, n}(x) \ dx.
        	 	\eeq
	in subinterval-by-subinterval style since $f^n(x, v)$ is discontinuous across cell boundaries. On each subinterval $I^{n+1, n}_{p, l} (l = 1, 2)$, $f^n(x, v) \Psi^{n+1, n}$ are continuous and can be computed exactly. Then the polynomial $f^{n+1}(x, v)$ is updated. For the ensuing discussion of the LMPP limiter, we assume there exists a polynomial $p_k(x)$ of degree $k$, approximating $f^{n+1}(x, v)$ over $I_p$. 

	\end{enumerate}

\item [{\it Step 3}.] {\bf LMPP limiter.} In order to control spurious oscillations near discontinuities, we apply a LMPP limiter based on a linear scaling similar to the one in \cite{zhang2010maximum} to get a modified $\tilde{p}_k(x)$ to $p_k(x)$:
\beq\label{eq:mpp_limiter}
\tilde{p}_k(x) = \theta (p_k(x) - \bar{p}) + \bar{p}, \quad \theta = \min \Big\{ \Big \vert \f{M^{n+1,n}_p - \bar{p}}{M' - \bar{p}} \Big \vert, \Big \vert \f{m^{n+1,n}_p - \bar{p}}{m' - \bar{p}} \Big \vert, 1 \Big\}
\eeq
where
\[
M' = \max \limits_{x\in I_p} p_k(x), \quad m' = \min \limits_{x\in I_p} p_k(x)
\]
and $\bar{p}$ is the cell average of the numerical solution $p_k(x)$ over $I_p$. The {\em local} upper/lower bounds $M^{n+1, n}_p/m^{n+1, n}_p$ are set to be within global maximum and minimum as in \cite{zhang2010maximum}. More specifically, we choose $M^{n+1, n}_p/m^{n+1, n}_p$ as the maximum and minimum of piecewise polynomials over all background Eulerian cells that cover $I^{n+1, n}_p$. See Figure~\ref{schematic_1d}. These choices of local upper/lower bounds not only preserve the MPP property globally, but also help control numerical oscillations. The cell average $\bar{p}$ is the zeroth moment of piecewise polynomials on upstream cells, thus we have $\bar{p} \in [m^{n+1,n}_p, M^{n+1,n}_p]$. It can be easily checked that the properties ${\it (a)}-{\it (c)}$ of $\tilde{p}_k(x)$ are satisfied for \eqref{eq:transport_only} with our choice of the local maximum/minimum values. 
	\begin{enumerate}
	
		\item[{\it (a)}]	{\it $(k+1)$-th Accuracy}: Proof can be done following the similar spirit to Lemma 2.4 in \cite{zhang2010maximum}. We numerically verify in Example~\ref{exa:limiter}.
		
		\item[{\it (b)}] {\it Conservation}: $\int_{I_p} \tilde{p}_k(x)\ dx = \int_{I_p} p_k(x)\ dx$.
		
		\item[{\it (c)}] {\it Local maximum principle preserving}: $\tilde{p}_k(x) \in [m^{n+1, n}_p, M^{n+1, n}_p]$.			\end{enumerate}	

\item [{\it Step 4}.] {\bf Modal to nodal at $t^{n+1}$. }From the DG polynomials, we can evaluate the updated NDG solution $\{f^{n+1}_{p, i_p}(v)\}$ with $f^{n+1}_{p, i_p}(v) = \tilde{p}_k(x_{p, i_p})$.

\end{enumerate} 

From now on, we denote the above {\bf Algorithm 1}, i.e. the SL NDG update with LMPP limiter using local upper/lower bounds, for the model problem \eqref{eq:transport_only} by
\beq	\label{sldg_notation}
f^{n+1}_{p, i_p}(v) = \text{SL NDG}(v, \Dt)\{f^n_{\cdot, \cdot}(v)\}
\eeq
where $\{f^n_{\cdot, \cdot}(v)\} = \{f^n_{q, i_q}(v)\}\ (q = 1, \cdots N_x, i_q = 1, \cdots k+1)$ denote all the NDG values at previous time and the parameters $(v, \Dt)$ represent the time step size $\Dt$ and velocity $v$. 


\definecolor{ao}{rgb}{0.0, 0.5, 0.0}

\begin{figure}[h!] \footnotesize
\centering
\begin{tikzpicture}[scale = 0.9]
    \draw[white, fill = blue!3] (0,3) -- (0-2.,0)  -- (2.5-2,0) -- (2.5 ,3);
    \draw[black]                 (-3,0) node[left] { } -- (3,0)
                                        node[right]{\scriptsize$t^{n}$};
    \draw[black]                 (-3,3) node[left] { } -- (3,3)
                                        node[right]{\scriptsize$t^{n+1}$};
    \draw[thick] (0, 3-0.1) -- (0, 3+0.1) node[above] {{\color{blue}{\scriptsize$x_{p-\frac12}$}}};
    \draw[thick] (2.5, 3-0.1) -- (2.5, 3+0.1) node[above] {{\color{blue}{\scriptsize$x_{p+\frac12}$}}};
    \draw[blue] (1.5, 3) node[above] {\scriptsize $I_p$};
    \draw[thick] (-2.5, 3-0.1) -- (-2.5, 3+0.1) node[above] {};
    \draw[thick] (0, 0-0.1) -- (0, 0+0.1) node[above] { };
    \draw[thick] (2.5, 0-0.1) -- (2.5, 0+0.1) node[above] { };
    \draw[thick] (-2.5, 0-0.1) -- (-2.5, 0+0.1) node[above] { };
 \draw[-latex, dashed, blue]( 0, 3 ) -- ( 0-2,0) node[below = 2pt] { };
 \draw[-latex, dashed, blue]( 2.5, 3 ) -- ( 2.5-2,0) node[below = 2pt] { };
 \fill [blue] ( 0, 3 ) circle (1.6pt) node[right] {};
 \fill [blue] ( 2.5, 3 ) circle (1.6pt) node[right] {};

 \fill [blue] ( 0-2, 0 ) circle (1.6pt) node[above right = -.5] {\scriptsize$x_{p-\frac12}^{n+1,n}$};
 \fill [blue] ( 2.5-2, 0 ) circle (1.6pt) node[above right = 2.] {\scriptsize$x_{p+\frac12}^{n+1,n}$};
 
 \draw[blue] (0.0, 0) node[above = 0.1] {\scriptsize $I^{n+1, n}_p$};
\draw [decorate, color = blue, decoration = {brace, mirror, amplitude = 3pt}, xshift = 0pt, yshift=0pt]
(-2. ,0) -- (0,0) node [blue, midway, xshift = 0cm, yshift = -14pt]
{\scriptsize $I_{p,1}^{n+1,n}$};
\draw [decorate, color = blue, decoration = {brace, mirror, amplitude = 3pt}, xshift = 0pt, yshift = 0pt]
( 0 ,0) -- (0.5,0) node [blue, midway, xshift = 0cm, yshift = -14pt]
{\scriptsize$I_{p,2}^{n+1,n}$};
\draw[dashed, thick]( -2.5, 1.) node[left] {} -- ( -2.5, -1.) node[right] {};
\draw[dashed, thick]( 2.5, 1.) node[left] {} -- ( 2.5, -1.) node[right] {};
\draw [decorate, decoration = {brace, mirror, amplitude = 3pt}, xshift = 0pt, yshift = 0pt]
(-2.5 ,-1.)-- (2.5, -1.) node [blue, midway, xshift = 0cm, yshift = -14pt]
{$I_{p-1}\cup I_p$};
\end{tikzpicture}
\caption{Schematic illustration of the SL NDG formulation with LMPP limiter in 1D for $v > 0$. Upstream interval $I^{n+1,n}_p = [x^{n+1, n}_{p-\f{1}{2}}, x^{n+1, n}_{p+\f{1}{2}}] = I^{n+1,n}_{p,1} \cup I^{n+1,n}_{p,2}$. The local maximum/minimum values $M^{n+1, n}_{p}/m^{n+1, n}_{p}$ in LMPP limiter~\eqref{eq:mpp_limiter} are computed on $I_{p-1} \cup I_p$.}
\label{schematic_1d}
\end{figure}
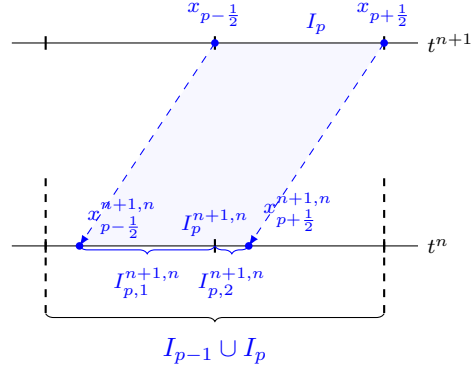


\begin{prop}(Mass conservation of SL NDG with LMPP for the transport term)
\label{prop:conser_trans}
The proposed SL NDG method with LMPP limiter as described in {\bf Algorithm 1} for the model problem~\eqref{eq:transport_only} has the following mass conservation property:
\beq \label{eq:conser_slndg}
\sum_{p, i_p} \Delta x_p w_{i_p}  f^{n+1}_{p, i_p}(v)=\sum_{p, i_p} \Delta x_p w_{i_p}  f^{n}_{p, i_p}(v),
\eeq 
where $\Delta x_p$ is the interval length of $I_p$ and $\{w_{i_p}\}^{k+1}_{p=1}$ are Gaussian quadrature weights corresponding to Gaussian quadrature points $\{x_{p,i_p}\}^{k+1}_{p=1}$ on $I_p$,  $p = 1, \cdots N_x$.
\begin{proof}
This is a direct consequence of two facts: one is that the SL modal DG method \cite{cai2017high} is mass conservative, and the second is the LMPP limiter maintains cell averages and total mass.
\end{proof}

\end{prop}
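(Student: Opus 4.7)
The plan is to translate the Gauss-quadrature sums in \eqref{eq:conser_slndg} into integrals of the underlying NDG polynomials, and then chain the two mass-conservation properties that Algorithm 1 inherits from its two components (the modal SLDG update and the LMPP limiter).

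First, because $f^n(\cdot,v)|_{I_p}\in P^k(I_p)$ and the $(k{+}1)$-point Gauss--Legendre rule $\{x_{p,i_p},w_{i_p}\}$ is exact on polynomials of degree at most $2k{+}1$, I would record the identity
\[
\Delta x_p\sum_{i_p=1}^{k+1} w_{i_p}\, f^n_{p,i_p}(v) \;=\; \int_{I_p} f^n(x,v)\,dx,
\]
together with its counterpart at $t^{n+1}$, where $f^{n+1}(\cdot,v)|_{I_p}=\tilde p_k\in P^k(I_p)$. Summing over $p$ reduces \eqref{eq:conser_slndg} to the integral form $\sum_p\int_{I_p} f^{n+1}(x,v)\,dx = \sum_p\int_{I_p} f^n(x,v)\,dx$.

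For this integral identity, I would choose the test function $\Psi\equiv 1\in V^k_h$ in the SLDG weak formulation \eqref{eq:sldg_weak}. The adjoint problem \eqref{adjoint_1D} then has the trivial solution $\Psi^{n+1,n}(x)\equiv 1$, so the unlimited polynomial $p_k$ produced in Step 2 satisfies
\[
\int_{I_p} p_k(x)\,dx \;=\; \int_{I^{n+1,n}_p} f^n(x,v)\,dx.
\]
Summing over $p$ and using that $\{I^{n+1,n}_p\}_p$ is a rigid translate of $\{I_p\}_p$ by $-v\Delta t$, so it tiles $[x_a,x_b]$ under periodic boundary conditions, gives $\sum_p\int_{I_p} p_k\,dx = \int_{x_a}^{x_b} f^n(x,v)\,dx$. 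Property (b) of the LMPP limiter, $\int_{I_p}\tilde p_k\,dx=\int_{I_p} p_k\,dx$, shows that Step 3 preserves cell averages elementwise and therefore does not alter this global sum. Applying the Gauss-exactness identity in reverse to $\tilde p_k\in P^k(I_p)$ recovers the left-hand side of \eqref{eq:conser_slndg}.

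The only delicate point is the summation-over-$p$ step, which relies on periodic (or matching) boundary conditions to ensure the upstream intervals tile the spatial domain; this is precisely the mass conservation of the modal SLDG scheme established in \cite{cai2017high}, and the rest is a short chaining of Gauss exactness with the cell-average preservation of the limiter.
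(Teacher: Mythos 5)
Your proof is correct and follows the same route as the paper, which simply cites the two facts you establish: mass conservation of the modal SLDG update (your $\Psi\equiv 1$ plus upstream-tiling argument, which is the argument in \cite{cai2017high}) and cell-average preservation of the LMPP limiter (property \emph{(b)}). The only addition is the Gauss-exactness bookkeeping linking nodal sums to integrals of the $P^k$ polynomials, which the paper leaves implicit.
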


\begin{rem}
In practice, in order to take advantage of the SLDG scheme that has been well developed and implemented in \cite{cai2017high}, we perform the transformations between nodal and modal values and the modal procedure is done with monomial basis. Note that the way of updating numerical solutions via SLDG method in {\it Step 2} can be intuitively interpreted as a composition of shifting of the background Eulerian cell $I_p$ at $t^{n+1}$ and moment projection on the upstream characteristic element $I^{n+1, n}_p$ at $t^n$.
\end{rem}


\subsection{A fully discretized SL NDG-DIRK method for the BGK equation}
\label{subsection:sldg_dirk}

We start from rewriting the BGK model~\eqref{eq: bgk} along characteristics
\beq \label{eq: bgk_sl}
\frac{df}{dt}\doteq \partial _{t}f+v\cdot \nabla _{x}f=  \frac{1}{\epsilon}(M_U-f), 
\eeq
where $\frac{df}{dt}$ is the material derivative along characteristics. $\ep$ could be either constant or spatially dependent. Assume a DIRK method has $s$ stages following the Butcher tableau
\begin{table}[ht]
\centering
\begin{tabular}{c|c}
{\bf c}	&	A     \\
\hline
		& ${\bf b}^T$
\end{tabular}
\end{table}\\
with invertible $A= (a_{i,j}) \in \mathbb{R}^{s \times s}$, intermediate stages ${\bf c} = [c_1, \cdots c_{s}]^T$, and quadrature weights ${\bf b}^T = [b_1, \cdots b_{s}]$. For the AP property, we consider only stiffly accurate (SA) DIRK method, i.e., $c_s = 1$ and $A(s, :)={\bf b}^T$ at the final time stage. Apply a SA DIRK method to \eqref{eq: bgk_sl}, the intermediate numerical solution $f^{(k)}(x, v) \approx f(x, v, t^{(k)})$ at each internal time stage $t^{(k)} = t^n + c_k \Delta t$, $k =1, \cdots s$, is given by:
\beq  \small
\label{eq: sch_SL}
f^{(k)}_{p, i_p}(v)  = \text{SL NDG}(v, c_k\Dt)\{f^n_{\cdot, \cdot}(v)\} + \Delta t \sum_{j=1}^{k}a_{k j}\text{SL NDG}(v, (c_k - c_j)\Dt)\left\{\f{1}{\ep_{\cdot, \cdot}}\left(M_U^{(j)}-f^{(j)}\right)_{\cdot, \cdot}(v)\right\}. 
\eeq
Due to the SA property, there is $f^{n+1}_{p, i_p}(v) = f^{(s)}_{p, i_p}(v)$. 

In the next section, we shall first introduce the formulation with the backward Euler time discretization and then give the generalized scheme with higher order DIRK methods.


\subsubsection{First order SL NDG scheme}

To properly describe the fully discretized scheme, we first introduce the phase space discretization for $v\in [-V, V]$ by a set of uniform quadrature nodes  
$
-V + \Delta v/2 = v_1 < \cdots< v_{q-1}< v_q<v_{q+1}< \cdots < v_{N_v} = V - \Delta v/2,
$
with $\Delta v = \frac{2V}{N_v}$. For the BGK equation, the main operation in $v$-directions is integration~\eqref{eq_U_bgk}.  
In particular, to obtain macroscopic moments $U^{n}_{p, i_p}$ of $f^{n}_{p, i_p}(v)$ at a Gaussian point $x_{p, i_p}$ over $I_p$, the mid point quadrature rule is applied, 
\beq \label{eq: v_disc}
U^{n}_{p, i_p} = \langle f^{n}_{p, i_p}(v)  \phi \rangle \approx \sum^{N_v}_{q = 1} f^{n}_{p, i_p}(v_q) \phi(v_q) \Delta v.
\eeq
This is is spectrally accurate for smooth solutions and with compact or periodic boundary conditions. The Maxwellian distribution $M_U^{n}$ at nodal Guassian points can be computed using \eqref{maxwellian} accordingly. Notice here the convenience of using nodal values (rather than the model information) of DG solutions in performing velocity integration and obtaining Maxwellian functions. 

Consider the first order backward Euler time discretization. $f^{n+1}_{p, i_p}$ can be updated following
\beq \label{eq_nodal_be}
f^{n+1}_{p, i_p}(v) = \text{SL NDG}(v, \Dt)\{f^n_{\cdot, \cdot}(v)\} + \f{\Dt}{\ep_{p, i_p}}\left(M^{n+1}_U - f^{n+1}\right)_{p, i_p}(v)
\eeq 					
where the macroscopic fields of $f^{n+1}_{p, i_p}(v)$ are needed to compute $M^{n+1}_U$. This nonlinearity can be mitigated with an explicit procedure by taking moments of \eqref{eq_nodal_be} \cite{pieraccini2007implicit, santagati2011new}, 
\begin{align}
\left< f^{n+1}_{p, i_p}(v) \phi \right>  
	&= \left< \text{SL NDG}(v, \Dt)\{f^n_{\cdot, \cdot}(v)\} \phi \right> + \f{\Dt}{\ep_{p, i_p}}\left< \left(M^{n+1}_U - f^{n+1}\right)_{p, i_p}(v) \phi \right>	 \nonumber \\
	&= \left< \text{SL NDG}(v, \Dt)\{f^n_{\cdot, \cdot}(v)\} \phi \right>, \label{eq:max_explicit}
\end{align}
where the term with relaxation operator vanishes due to \eqref{eq:Mu_f}. Then local Maxwellian $M^{n+1}_U(x_{p, i_p}, v_q)$ can be obtained using \eqref{maxwellian}. It was pointed out in \cite{santagati2011new, groppi2014high, boscarino2019high} that $M_U(x, v)$, computed as the continuous local Maxwellian via \eqref{maxwellian} 
using discrete macroscopic fields $U$ approximated by \eqref{eq: v_disc}, may not necessarily have the same moments as $f(x, v)$ when only small number of grid points in velocity space is used. This deviation will further cause the lack of conservation for the BGK relaxation term and it can be corrected by employing the discrete Maxwellian proposed in \cite{mieussens2000discrete, mieussens2000discrete_jcp}, where an unknown parameter for the discrete Maxwellian needs to be found by solving a nonlinear system. In this paper, we neglect this discrepancy and assume sufficient resolution in velocity directions. Below is the procedure we adopt for the backward Euler discretization.
\begin{enumerate}
\item Predict 
\beq
\label{eq: f*}
f^{*, n}_{p, i_p}(v_q) =  \text{SL NDG}(v_q, \Dt)\{f^n_{\cdot, \cdot}(v_q)\}.
\eeq
\item Calculate the macroscopic fields $U^{n+1}_{p, i_p} = \langle f^{*, n}_{p, i_p} \phi \rangle\ \text{using}\ \eqref{eq: v_disc}$.
\item Compute the local Maxwellian $M^{n+1}_U(x_{p, i_p}, v_q) = M_U[U^{n+1}_{p, i_p}]\ \text{from}\ \eqref{maxwellian}$.
\item Update the nodal value $f^{n+1}_{p, i_p}$ by rearranging \eqref{eq_nodal_be}
\beq \label{eq_nodal_be2}
f^{n+1}_{p, i_p}(v_q) = \f{f^{*, n}_{p, i_p}(v_q) + \f{\Dt}{\ep_{p, i_p}} M^{n+1}_U(x_{p, i_p}, v_q)}{\left(1+\f{\Dt}{\ep_{p, i_p}} \right) }, \quad \forall q.
\eeq						
\end{enumerate}

\begin{prop}(Positivity-preserving (PP) property of SL NDG-BE for the BGK model)
\label{prop:pp_be}
Consider the SL NDG scheme using piecewise $P^k$ polynomial as the solution space with the LMPP limiter, coupled with the first-order backward Euler scheme, for solving the BGK model~\eqref{eq: bgk}. The numerical solution $\{f^{n+1}_{p,i_p}(v_q)\}$ is positivity preserving.
\end{prop}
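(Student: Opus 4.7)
The plan is to exploit the special structure of the update formula \eqref{eq_nodal_be2} to reduce the claim to two positivity statements that can be verified separately. Rewriting \eqref{eq_nodal_be2} as
\[
f^{n+1}_{p, i_p}(v_q) = \f{1}{1+\Dt/\ep_{p, i_p}}\, f^{*, n}_{p, i_p}(v_q) + \f{\Dt/\ep_{p, i_p}}{1+\Dt/\ep_{p, i_p}}\, M^{n+1}_U(x_{p, i_p}, v_q),
\]
which is a convex combination with strictly positive weights, it suffices to show (i) $f^{*, n}_{p, i_p}(v_q) \geq 0$, and (ii) $M^{n+1}_U(x_{p, i_p}, v_q) \geq 0$.

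Statement (ii) is immediate from \eqref{maxwellian} as soon as the macroscopic density and temperature at $x_{p,i_p}$ are nonnegative and positive, respectively. Because of the moment identity \eqref{eq:max_explicit}, the macroscopic fields $U^{n+1}_{p,i_p}$ used to build $M^{n+1}_U$ are the discrete moments of $f^{*,n}_{p, i_p}$ through the quadrature \eqref{eq: v_disc}, so both the density and (after subtracting the kinetic energy) the temperature inherit positivity from (i). Hence the whole argument reduces to establishing (i).

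For (i), I would argue by induction on $n$, with the hypothesis that $f^n_{p, i_p}(v_q)\geq 0$ at every nodal point and for every discrete velocity. For a fixed $v_q$, the predicted value $f^{*,n}_{p, i_p}(v_q)$ is produced by the SL NDG step with LMPP limiter described in {\bf Algorithm 1}. Property $(c)$ of the limiter yields
\[
f^{*,n}_{p, i_p}(v_q) = \tilde{p}_k(x_{p, i_p}) \in [m^{n+1, n}_p, \, M^{n+1, n}_p],
\]
so the remaining point is to show that the local lower bound $m^{n+1, n}_p$, taken over the background Eulerian cells that cover the upstream interval $I^{n+1, n}_p$, is nonnegative. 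Interpreted at the nodal level, $m^{n+1, n}_p$ is a minimum of quantities of the form $f^n_{q, i_q}(v_q)$, so the induction hypothesis gives $m^{n+1, n}_p \geq 0$ and hence $f^{*, n}_{p, i_p}(v_q) \geq 0$. Combining with (ii) and the convex combination above closes the induction.

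The main obstacle is the gap between nodal nonnegativity and pointwise nonnegativity of the underlying DG polynomial: if $m^{n+1, n}_p$ is computed from the polynomial values (rather than the nodal values) of $f^n$ over the relevant Eulerian cells, a DG polynomial that is nonnegative at Gaussian nodes may still dip below zero in between, which would spoil the bound $m^{n+1, n}_p \geq 0$. This can be handled either by adopting the convention that the local bound is evaluated from nodal values, or by appending a Zhang--Shu-type positivity-preserving scaling to the polynomial $p_k(x)$ before the LMPP step; since such a scaling is compatible with conservation and local boundedness, the induction above goes through unchanged.
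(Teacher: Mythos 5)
Your proof follows essentially the same route as the paper's: the paper likewise rewrites \eqref{eq_nodal_be2} as a convex combination of $f^{*,n}_{p,i_p}(v_q)$ and $M^{n+1}_U(x_{p,i_p},v_q)$ with weights $\ep_{p,i_p}/(\ep_{p,i_p}+\Dt)$ and $\Dt/(\ep_{p,i_p}+\Dt)$, and then simply asserts that the SL NDG step with the LMPP limiter is positivity preserving, which is exactly the point you expand into an induction via property $(c)$ of the limiter. The nodal-versus-pointwise subtlety you flag --- that after the relaxation update the interpolating polynomial of nonnegative nodal values may dip below zero between Gaussian nodes, so the upstream local minima $m^{n+1,n}_p$ at the next step need not be nonnegative --- is a genuine issue that the paper's one-line assertion glosses over rather than resolves, so your added care (fixing the convention for the local bounds, or inserting a Zhang--Shu-type scaling) strengthens rather than departs from the published argument.
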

\begin{proof}
The SL NDG scheme with LMPP limiter is positivity preserving. In additional, from \eqref{eq_nodal_be2}, we have
\[
f^{n+1}_{p, i_p}(v_q) = \f{\ep_{p, i_p}}{\ep_{p, i_p} + \Dt}f^{*, n}_{p, i_p}(v_q) + \f{\Dt}{\ep_{p, i_p} + \Dt}M^{n+1}_U(x_{p, i_p}, v_q), \quad \forall q. 
\]
That is $f^{n+1}_{p, i_p}(v_q)$ is a convex combination of non-negative terms $f^{*, n}_{p, i_p}(v_q)$ and $M^{n+1}_U(x_{p, i_p}, v_q)$.

\end{proof}

\noindent

\subsubsection{High order SL NDG schemes}

In order to attain higher order accuracy in time, we employ high order DIRK methods. Examples of DIRK Butcher tableaus can be found in the Appendix, see Table~\ref{tab_dirk2} for a $2$-stage DIRK (DIRK2) method \cite{calvo2001linearly} and Table~\ref{tab:rw_9} for a $4$-stage DIRK3 method proposed in \cite{ding2021semi}. 

For the convenience of discussion, we introduce the following notation:
\[
I^{\tau_2, \tau_1}_p,	\quad f^{*, \tau_1};	\quad t^n \leq \tau_1 \leq \tau_2 \leq t^{n+1}.
\]
$\tau_1, \tau_2$ are the intermediate time stages. $I^{\tau_2, \tau_1}_p = [x^{\tau_2, \tau_1}_{p - \frac{1}{2}}, x^{\tau_2, \tau_1}_{p + \frac{1}{2}}]$ denotes the upstream characteristic element located by the backward characteristics tracing from the cell boundaries of $I_p$ at time $\tau_2$ backward in time to $\tau_1$, similar to those in \cite{ding2019semi}. For example, when $\tau_2 = t^{n+1}, \tau_1 = t^n$, $I^{n+1, n}_p$ is the upstream cell in \eqref{eq:sldg_weak}. See Figure~\ref{schematic_dirk} for $I^{(1), n}_p$ with $\tau_2 = t^{(1)}$ and $\tau_1 = t^n$. $f^{*, \tau_1}$ serves as a SL NDG prediction at time level $\tau_1$. For instance, $f^{*, n}_{p, i_p}$ in \eqref{eq: f*} stands for the solution after advection obtained via the SL NDG method. Our proposed SL NDG method coupled with DIRK methods is summarized as follows.
\begin{enumerate}

\item[{\it Step 1}] In the first time stage with $t^{(1)} = t^n + c_1 \Delta t$, as shown in Figure~\ref{schematic_dirk(a)}, the numerical solution $f^{(1)}_{p, i_p}(v_q)$ is solved from
	\beq \label{eq:dirk_stage1}
	f^{(1)}_{p, i_p} (v_q)= \text{SL NDG}(v_q, c_1\Dt)\{f^n_{\cdot, \cdot}(v_q)\} +  a_{11}\f{\Dt}{\ep_{p, i_p}}\left(M^{(1)}_U - f^{(1)}\right)_{p, i_p}(v_q),\quad \forall q,
	\eeq
similarly to \eqref{eq_nodal_be} using the first order backward Euler method but with $a_{11}\Delta t$.

\item[{\it Step 2}] For $k=2, \cdots s$, with the $k$-th internal stage $t^{(k)} = t^n+c_k\Delta t$, compute
        \beq \label{eq:dirk_stage2} \small
        \begin{split}
	f^{(k)}_{p,i_p}(v_q)  &=	
	 \underbrace{\text{SL NDG}(v_q, c_k\Dt)\{f^n_{\cdot, \cdot}(v_q)\}}_{\text{Term I}} \\
	 &  +  \Dt \sum_{j=1}^{k-1} a_{k j} \underbrace{\text{SL NDG}(v_q, (c_k - c_j)\Dt)\left\{\f{1}{\ep_{\cdot, \cdot}}\left(M^{(j)}_U-f^{(j)}\right)_{\cdot, \cdot}(v_q)\right\}}_{\text{Term II}} \\
	&+ a_{kk} \f{\Dt}{\ep_{p, i_p}} \left(M^{(k)}_U-f^{(k)}\right)_{p,i_p}(v_q),\quad \forall q.
        \end{split}
        \eeq
Here Term I and Term II are computed by applying the SL NDG strategy described in {\bf Algorithm 1} on upstream cells $I^{(k), n}_p$ and $I^{(k), (j)}_p$ respectively, see Figure~\ref{schematic_dirk(b)}. Let
	\beq
	f^{*, (k)}_{p, i_p}(v_q) = \text{Term I} + \Dt \sum\limits_{j=1}^{k-1} a_{kj}\text{Term II}.
	\eeq
The macroscopic fields of $f^{(k)}_{p,i_p}(v)$ can be obtained by taking the moments of $f^{*, (k)}_{p, i_p}(v_q)$ due to the conservation property of the BGK relaxation operator. That is, 
	\begin{equation} \label{eq:scheme2_dirk_relax}
	U^{(k)}_{p, i_p} = \langle f^{(k)}_{p,i_p}(v) \phi(v) \rangle = \langle f^{*, (k)}_{p, i_p}(v) \phi(v) \rangle. 
	\end{equation}
Then we can compute the local Maxwellian $M^{(k)}_U(x_{p, i_p},v_q)$ explicitly and the nodal value $f^{(k)}_{p, i_p} (v_q)$ is obtained by
	\[
	f^{(k)}_{p,i_p}(v_q)		=	\f{f^{*, (k)}_{p, i_p}(v_q)+ a_{kk} \f{\Dt}{\ep_{p, i_p}} M^{(k)}_U(x_{p, i_p}, v_q)}{1+\f{a_{kk} \Delta t}{\ep_{p, i_p}}}, \quad \forall q.
	\]
                     
\item[{\it Step 3}] Finally, $f^{n+1}_{p, i_p}(v_q) = f^{(s)}_{p, i_p}(v_q), \  \forall p, i_p, q$, due to the stiffly accurate property of DIRK methods.			
\end{enumerate}

\begin{rem}
Unfortunately, the PP property can not generally be achieved for high order DIRK methods. As addressed by Proposition 6.2 in \cite{gottlieb2001strong}, there does not exist unconditionally strong-stability-preserving (SSP) implicit RK schemes of order higher than one.
\end{rem}

\definecolor{officegreen}{rgb}{0.0, 0.5, 0.0}
\definecolor{darkpastelgreen}{rgb}{0.01, 0.75, 0.24}
\definecolor{burntorange}{rgb}{0.8, 0.33, 0.0}
\definecolor{armygreen}{rgb}{0.29, 0.33, 0.13}
\definecolor{applegreen}{rgb}{0.55, 0.71, 0.0}
\definecolor{ao}{rgb}{0.0, 0.5, 0.0}

\begin{figure}[h!]
\centering
\subfigure[\footnotesize{First time stage $t^{(1)}$}]{
\label{schematic_dirk(a)}
\begin{tikzpicture}[scale=1.0]
\draw[white,fill=blue!3] (0,1.2) -- (0-0.7,0)  -- (2.5-0.7,0) -- (2.5, 1.2);
\draw[black]                 (-3,0) node[left] { } -- (3,0)
                                    node[right]{\scriptsize$t^{n}$};

\draw[black]                 (-3,1.2) node[left] { } -- (3,1.2)
                                    node[right]{\scriptsize$t^{(1)}$};

\draw[black]                 (-3,3) node[left] { } -- (3,3)
                                    node[right]{\scriptsize$t^{n+1}$};
\draw[thick] (0, 0-0.1) -- (0, 0+0.1) 
				node[above] { };
\draw[thick] (2.5, 0-0.1) -- (2.5, 0+0.1) 
				node[above] { };
\draw[thick] (-2.5, 0-0.1) -- (-2.5, 0+0.1) 
				node[above] { };
\draw[thick] (0, 1.2-0.1) -- (0, 1.2+0.1) 
				node[above] { {\color{blue}{\scriptsize$x_{p-\frac{1}{2}}$}}};
\draw[thick] (2.5, 1.2-0.1) -- (2.5, 1.2+0.1) 
				node[above right] {{\color{blue}{\scriptsize$x_{p+\frac{1}{2}}$ }} };
\draw[thick] (-2.5, 1.2-0.1) -- (-2.5, 1.2+0.1) 
				node[above] { };
\draw[thick] (0, 3-0.1) -- (0, 3+0.1) 
				node[above] { };
\draw[thick] (2.5, 3-0.1) -- (2.5, 3+0.1) 
				node[above] { };
\draw[thick] (-2.5, 3-0.1) -- (-2.5, 3+0.1) 
				node[above] {};
 \draw[-latex,dashed,blue]( 0, 1.2 ) -- ( 0-0.7 ,0) 
 				node[below=2pt] { };
 \draw[-latex,dashed,blue]( 2.5, 1.2 ) -- ( 2.5-0.7 ,0) 
				node[below=2pt] { };
 \draw[-latex, ao]( 0+0.6, 1.2 ) -- ( 0-0.7+0.6 ,0) 
 				node[below=2pt] { };
 \fill [blue] ( 0, 1.2 ) circle (1.6pt) 
 				node[right] {};
 \fill [blue] ( 2.5, 1.2 ) circle (1.6pt) 
 				node[right] {};
 \fill [blue] ( 0-0.7, 0 ) circle (1.6pt) 
 				node[below left=-0.1 ] {\scriptsize$x_{p-\frac{1}{2}}^{(1),n}$};
 \fill [blue] ( 2.5-0.7, 0 ) circle (1.6pt) 
 				node[below right=-0.1] {\scriptsize$x_{p+\frac{1}{2}}^{(1),n}$};
 \fill [ao] ( 0+0.6, 1.2 ) circle (1.6pt) 
 				node[]{};
 \draw[-latex, ao]( 0+1, 1.7 ) 
 				node[right=-3pt,scale=1.0] {\scriptsize$f^{(1)}_{p,i_p}(v_q)$} 
				-- ( 0+0.6, 1.2) node[below=2pt] {};
 \fill [ao] ( -0.7+0.6, 0 ) circle (1.6pt) 
 				node[above left=-0.1 ] { };        
 \draw[-latex, ao]( 0., -0.5 )
 				node[right=-4pt,scale=1.0]{{\color{ao}{\scriptsize$x_{p, i_p} - c_1v_q \Delta t$}} }
			        -- ( -0.7+0.6, 0.) node[below=1pt] {};
 \draw[blue] (1.2-0.3, 0) node[above = 0.1] {\scriptsize $I^{(1), n}_p$};
\end{tikzpicture}
}
\subfigure[\footnotesize{Internal time stages $t^{(k)}, k = 2,\cdots s$}]{
\label{schematic_dirk(b)}
\begin{tikzpicture}[scale=1.0]
    \draw[white,fill=blue!3] (0,1.8) -- (0-1.5,0) -- (2.5-1.5,0) -- (2.5 ,1.8);
    \draw[black]                 (-3,0) node[left] { } -- (3,0)
                                        node[right]{\scriptsize$t^{n}$};

    \draw[black]                 (-3,0.8) node[left] { } -- (3,0.8)
                                        node[right]{\scriptsize$t^{(j)}$};

    \draw[black]                 (-3,1.8) node[left] { } -- (3,1.8)
                                        node[right]{\scriptsize$t^{(k)}$};

    \draw[black]                 (-3,3) node[left] { } -- (3,3)
                                        node[right]{\scriptsize$t^{n+1}$};
    \draw[thick] (0, 0-0.1) -- (0, 0+0.1) node[above] { };
    \draw[thick] (2.5, 0-0.1) -- (2.5, 0+0.1) node[above] { };
    \draw[thick] (-2.5, 0-0.1) -- (-2.5, 0+0.1) node[above] { };
    \draw[thick] (0, 0.8-0.1) -- (0, 0.8+0.1) node[above] { };
    \draw[thick] (2.5, 0.8-0.1) -- (2.5, 0.8+0.1) node[above] { };
    \draw[thick] (-2.5, 0.8-0.1) -- (-2.5, 0.8+0.1) node[above] { };
    \draw[thick] (0, 1.8-0.1) -- (0, 1.8+0.1) node[above] {{\color{blue}{\scriptsize$x_{p-\frac{1}{2}}$}}};
    \draw[thick] (2.5, 1.8-0.1) -- (2.5, 1.8+0.1) node[above right] {{\color{blue}{\scriptsize$x_{p+\frac{1}{2}}$}}};
    \draw[thick] (-2.5, 1.8-0.1) -- (-2.5, 1.8+0.1) node[above] {};
    \draw[thick] (0, 3-0.1) -- (0, 3+0.1) node[above] { };
    \draw[thick] (2.5, 3-0.1) -- (2.5, 3+0.1) node[above] { };
    \draw[thick] (-2.5, 3-0.1) -- (-2.5, 3+0.1) node[above] { };
 \draw[-latex,dashed,blue]( 0, 1.8 ) -- ( 0-1.5,0) node[below=2pt] { };
 \draw[-latex,dashed,blue]( 2.5, 1.8 ) -- ( 2.5-1.5,0) node[below=2pt] { };

 \draw[-latex ,ao]( 0+0.6, 1.8 ) -- ( 0-1.5+0.6,0) node[below=2pt] { };
 \fill [blue] ( 0, 1.8 ) circle (1.6pt) node[right] {};
 \fill [blue] ( 2.5, 1.8 ) circle (1.6pt) node[right] {};
  \fill [blue] ( 0-0.8, 0.8 ) circle (1.6pt) node[ below left=-3pt  ] {\scriptsize$x_{p-\frac{1}{2}}^{(k),(j)}$};
 \fill [blue] ( 2.5-0.8, 0.8 ) circle (1.6pt) node[ below right=-3pt] {\scriptsize$x_{p+\frac{1}{2}}^{(k),(j)}$};
 \fill [blue] ( 0-1.5, 0 ) circle (1.6pt) node[below left=-0.1 ] {\scriptsize $x_{p-\frac{1}{2}}^{(k),n}$};
 \fill [blue] ( 2.5-1.5, 0 ) circle (1.6pt) node[below right=-0.1] {\scriptsize $x_{p+\frac{1}{2}}^{(k),n}$};
 \fill [ao] ( 0+0.6, 1.8 ) circle (1.6pt) node[below right = -0.1] {};
 \fill [ao] ( 0-0.2 , 0.8 ) circle (1.6pt) node[above left=-0.1 ] { };
 \fill [ao] ( 0-1.5+0.6, 0 ) circle (1.6pt) node[above left=-0.1 ] { };
 \draw[-latex, ao]( 0+1, 2.3 )node[right=-3pt,scale=1.0]{\scriptsize $f^{(k)}_{p,i_p}(v_q)$}
        -- ( 0+0.6, 1.8) node[below=2pt] { };

 \draw[-latex, ao]( -0.3, 1.2 )node[left=-4pt,scale=1.0]{{\color{ao}{\scriptsize$x_{p, i_p} - (c_k-c_j) v_q\Delta t$}}}
        -- ( 0-0.2, 0.8) node[above=1pt] {};
                
 \draw[-latex, ao]( -0.8, -0.5 )node[right=-4pt,scale=1.0]{{\color{ao}{\scriptsize$x_{p, i_p} - c_k v_q\Delta t$}}}
        -- ( 0-1.5+0.6, 0.) node[below=1pt] {};
 \draw[blue] (1.2-0.1, 0.8) node[above = 0.1] {\scriptsize $I^{(k), (j)}_p$};
 \draw[blue] (1.2-1.0, 0) node[above = 0.1] {\scriptsize $I^{(k), n}_p$};
\end{tikzpicture}
}
\caption{Schematic illustration of 1D SL NDG-DIRK formulation for $v_q > 0$.}
\label{schematic_dirk}
\end{figure}


\subsubsection{DIRK discretization for \eqref{eq: bgk_sl} in Shu-Osher form}

Keeping physical and phase spaces continuous and assuming $a_{kk} > 0$, an alternative approach of performing the DIRK time discretization for \eqref{eq: bgk_sl} along characteristics is in the Shu-Osher form. For $k = 1, \cdots s$, 
\beq \label{eq:so} \small
f^{(k)}(x, v) = (1-\sum_{j=1}^{k-1} b_{k j})f^n(x - vc_k\Dt, v) + \sum_{j=1}^{k-1}b_{k j}f^{(j)}(x - v(c_k - c_j)\Dt, v) + a_{kk} \f{\Dt}{\ep}(M_U^{(k)} - f^{(k)})(x, v),
\eeq
where the coefficients $b_{kj}$ are given by the iterative relation
\beq \label{bkj}
b_{kj} = \frac{a_{kj}}{a_{jj}}-\sum_{l=j+1}^{k-1}\frac{a_{kl}b_{lj}}{a_{ll}},\quad k>j\ge 1.
\eeq
See \cite{ding2021semi} for the detailed derivation of \eqref{eq:so}. In \cite{ding2021semi}, we also perform the accuracy analysis of \eqref{eq:so} by conducting the Taylor expansion in the limiting fluid regime. We find that an extra order condition needs to be imposed in order to ensure the consistency of third order accuracy in both regimes. A family of $4$-stage DIRK3 methods are constructed. Meanwhile, the stability of the newly created DIRK3 methods is also studied via the Von Neumann analysis to a linear two-velocity kinetic model. According to the accuracy and stability analysis, we select DIRK3 method in Table~\ref{tab:rw_9} from \cite{ding2021semi}. Implementation-wise, when applying the SL NDG discretization to the transport terms in \eqref{eq:so}, we will have
\beq \label{eq: sch_SO} \small
\begin{split}
f^{(k)}_{p, i_p}(v) &= (1-\sum_{j=1}^{k-1} b_{k j})\text{SL NDG}(v, c_k\Dt)\{f^n_{\cdot, \cdot}(v)\} + \sum_{j=1}^{k-1}b_{k j}\text{SL NDG}(v, (c_k-c_j)\Dt)\{f^{(j)}_{\cdot, \cdot}(v)\} \\
&+ a_{kk}\f{\Dt}{\ep_{p, i_p}}\left(M^{(k)}_U - f^{(k)}\right)_{p, i_p}(v).
\end{split}
\eeq
Compared with \eqref{eq: sch_SL}, \eqref{eq: sch_SO} involves only one relaxation term at each intermediate time stage. Additionally, \eqref{eq: sch_SO} does not require the storage of the numerical values of $M^{(k)}_U$. However, we notice that \eqref{eq: sch_SO} is unstable when $\ep$ is large. The main difference between \eqref{eq: sch_SL} and \eqref{eq: sch_SO} in the SL setting can be best seen from the model problem \eqref{eq: bgk} taking $\ep \ra \infty$. That is, we consider the linear convection problem, 
\beq \label{so_linear}
f_t + f_x = 0.
\eeq
where we let $v = 1$ in \eqref{eq:transport_only} for simplicity. If we follow the scheme in \eqref{eq: sch_SL}, then when $\ep \ra \infty$ we have,
\[
{\it Scheme 1:} \quad	f^{n+1}_{p, i_p} = \text{SL NDG}(1, \Dt)\{f^n_{\cdot, \cdot}\}, 
\]
which is a nodal form of SLDG method \cite{cai2017high} and is known to be unconditionally stable. On the other hand, if we follow the scheme formulated from \eqref{eq: sch_SO}, then when $\ep \ra \infty$ we have for $k = 1, \cdots s$,
\[
{\it Scheme 2:}	\quad	
f^{(k)}_{p, i_p} = (1-\sum_{j=1}^{k-1} b_{k j})\text{SL NDG}(1, c_k\Dt)\{f^n_{\cdot, \cdot}\} + \sum_{j=1}^{k-1}b_{k j}\text{SL NDG}(1, (c_k-c_j)\Dt)\{f^{(j)}_{\cdot, \cdot}\},
\]
where the solution $f^{n+1}_{p, i_p}$ is updated using a linear combination of SL NDG acting on all the intermediate DIRK time stages. The stability of {\it Scheme 2} relies on the stability of quadrature rules employed here and is subject to stability constraint on time stepping size. In Figure~\ref{fig:stab_s1:s2}, we show the $L^1$ error vs. CFL varying from $0.5$ to $10.5$ for two schemes. From Figure~\ref{fig:stab_s1:dirk2}, we observe that the Scheme I is unconditionally stable and the error has a similar pattern as the one in \cite{qiu2011conservative}. From Figure~\ref{fig:stab_s2:dirk2}, numerical instability is observed. Therefore, for the numerical experiments in Section~\ref{sec:test}, we select formulation~\eqref{eq: sch_SL}.
\begin{figure}[htbp]
\centering
\subfigure[\footnotesize{\it Scheme 1}]{
\centering
\includegraphics[width=3.0in]{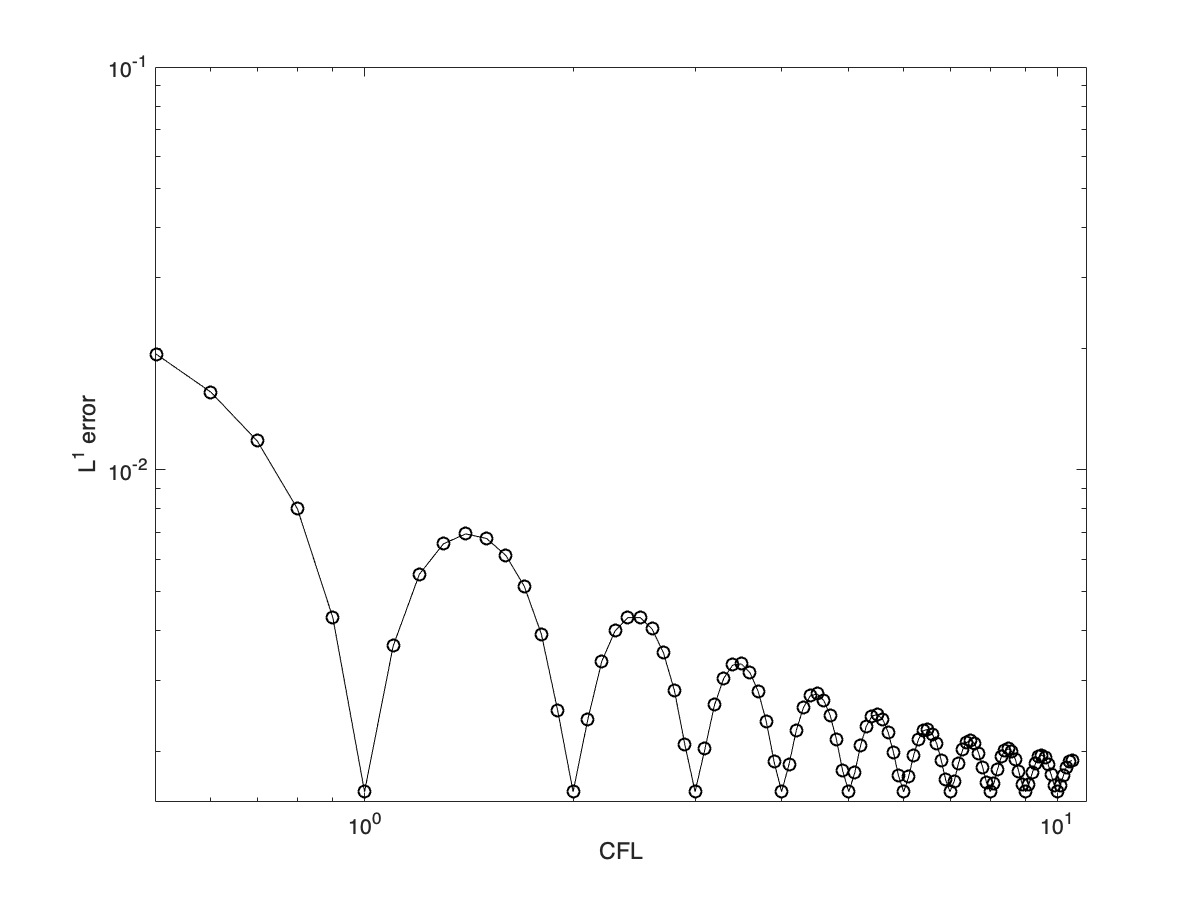}
\label{fig:stab_s1:dirk2}
}%
\subfigure[\footnotesize{\it Scheme 2}]{
\centering
\includegraphics[width=3.0in]{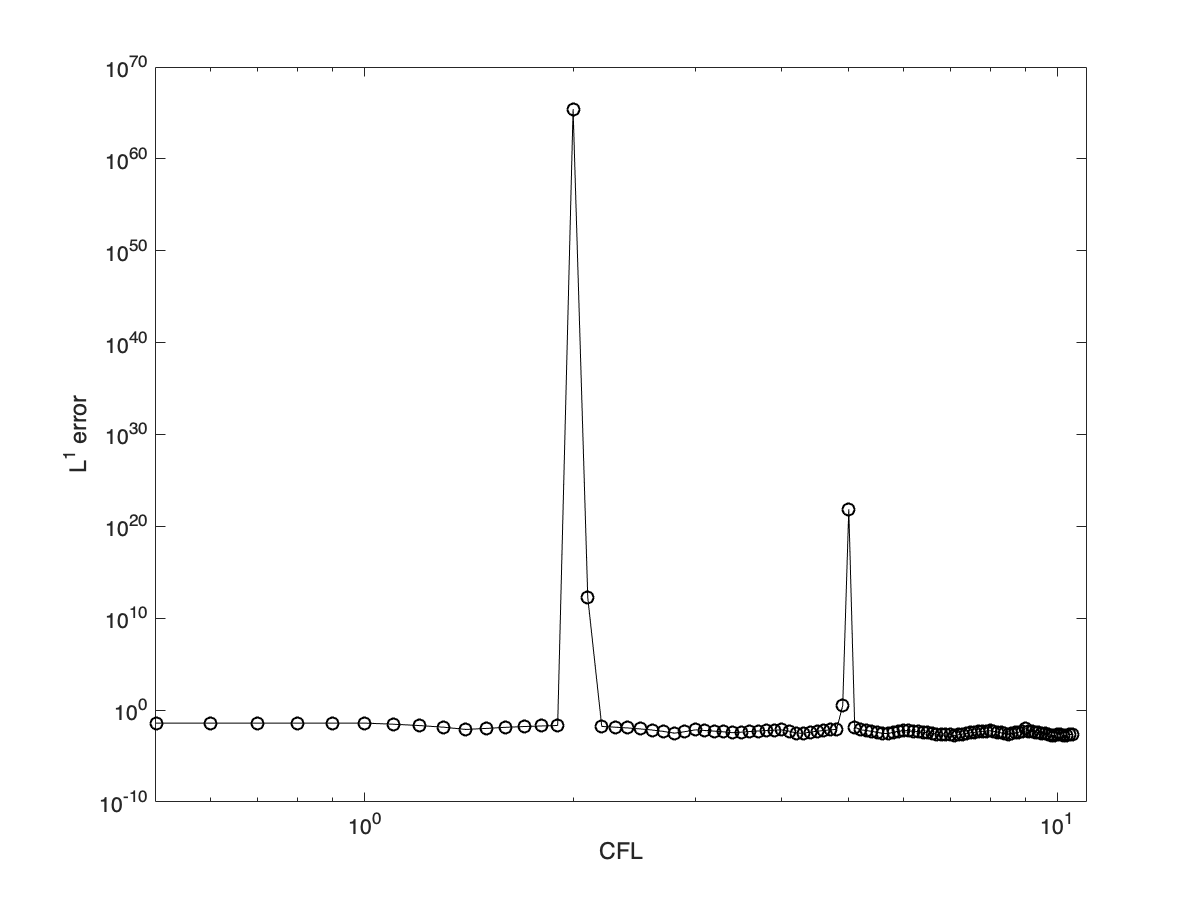}
\label{fig:stab_s2:dirk2}
}%
\caption{$L^1$ error versus CFL for~\eqref{so_linear} on $[0, 1]$ with periodic boundary condition and exact solution $f(x, t) = \sin(2\pi(x-t))$ at $t = 2.0$. Spatial discretization is the SL NDG scheme using $P^0$ polynomial space with $N_x = 640$, and temporal discretization is DIRK2. Left: {\it Scheme 1}. Right: {\it Scheme 2}.}
\label{fig:stab_s1:s2}
\end{figure}

\section{Numerical Tests}
\label{sec:test}
\setcounter{equation}{0}

We first present numerical experiments on the LMPP limiter regarding its order of accuracy and capability of controlling oscillations near discontinuities in Section \ref{limiter_test}. Then we verify both spatial and temporal order of accuracy of our scheme from a smooth problem in Section~\ref{accuracy_test}. In Section~\ref{shock_capture}, we illustrate the AP property for the limiting fluid regime and for the mixed regime problems using variable $\ep(x)$. 

Numerical experiments are performed on the velocity domain $v \in [-V, V ]$ with $V = 15$, except for Example~\ref{exa:variable_ep} where $V = 10$. The velocity space is discretized with uniformly distributed $N_v = 100$ grid points. We use a third order SL NDG scheme unless otherwise specified. Periodic boundary condition is used, except for Example~\ref{exa:shock} where free-flow boundary condition is used. The time stepping size is chosen following the CFL condition for the convection part: $\Dt = CFL\cdot \f{\Dx}{V}$, where $CFL$ is usually taken larger than $1$, i.e. beyond the stability constraint from an Eulerian method. 

\subsection{LMPP limiter} \label{limiter_test}

\begin{exa} \label{exa:limiter}
We apply the proposed SL NDG method with LMPP limiter in \eqref{eq:mpp_limiter} to solving the pure linear transport problem~\eqref{so_linear}
\[
f_t + f_x = 0
\]
on $[0, 2\pi]$ with initial value $f(x, 0) = \sin(x)$ and exact solution $f(x, t) = \sin(x-t)$. The $L^1$ and $L^{\infty}$ errors and the corresponding order of accuracy of SL NDG with $P^1$ and $P2$ solution spaces are summarized in Table~\ref{tab:test1_order}. The $L^{\infty}$ errors are computed with six Gauss quadrature points over each interval. We can see second and third order accuracy are maintained when the LMPP limiter is used for $P^1$ and $P^2$ cases.
\begin{table} [!htbp] \scriptsize
\centering
\begin{tabular}{|c |cc  cc | cc  cc|}
\hline
\multicolumn{1}{|c}{}&\multicolumn{4}{c|}{SL NDG without LMPP limiter}&\multicolumn{4}{|c|}{SL NDG with LMPP limiter}\\
\hline
\multicolumn{9}{|c|}{$P^1$}\\
\hline
$N_x$ & {$L^1$ error} & Order& {$L^{\infty}$ error} & Order&{$L^1$ error} & Order&{$L^{\infty}$ error} & Order\\
\hline
10&		9.19E-03 & 	&     3.43E-02 &	&     1.68E-02 &	&     7.30E-02 &  \\
20&		2.60E-03 &     1.82 &     1.13E-02 &     1.61 &	3.37E-03 &     2.32 &     1.86E-02 &     1.97  \\
40&		6.57E-04 &     1.98  &     2.95E-03 &     1.93 &	7.99E-04 &     2.08 &     5.29E-03 &     1.81  \\
80&		1.27E-04 &     2.37  &     4.14E-04 &     2.83 &	1.75E-04 &     2.19 &     1.49E-03 &     1.83  \\
160&		3.95E-05 &     1.68 &     1.78E-04 &     1.22 &	5.11E-05 &     1.77 &    5.15E-04 &     1.53  \\
320&		1.03E-05 &     1.94 &     4.70E-05 &     1.92 &	1.23E-05 &     2.05 &    1.52E-04 &     1.76  \\
\hline
\multicolumn{9}{|c|}{$P^2$}\\
\hline
$N_x$ & {$L^1$ error} & Order& {$L^{\infty}$ error} & Order&{$L^1$ error} & Order&{$L^{\infty}$ error} & Order\\
\hline
10&		4.23E-04 & 	&     2.68E-03 & 	&	4.61E-04 &	&     2.69E-03 &  \\
20&		5.88E-05 &     2.85 &     2.58E-04 &     3.37 & 	6.55E-05 &     2.81 &     2.58E-04 &     3.38  \\
40&		7.48E-06 &     2.98 &     3.16E-05 &     3.03 &	7.87E-06 &     3.06 &     3.16E-05 &     3.03  \\
80&		1.12E-06 &     2.74 &     2.35E-06 &     3.75 &	1.14E-06 &     2.79 &     2.35E-06 &     3.75  \\
160&		1.17E-07 &     3.26 &     5.72E-07 &     2.04 &	1.18E-07 &     3.27 &    5.72E-07 &     2.04  \\
320&		1.47E-08 &     2.99 &     6.20E-08 &     3.21 &	1.50E-08 &     2.98 &    6.20E-08 &     3.21  \\
\hline
\end{tabular}
\caption{$L^1$ and $L^{\infty}$ errors and orders for solving Example~\ref{exa:limiter} with initial condition~\eqref{mpp_plot} using SL NDG scheme without and with LMPP limiter at $t = 10.0$. $CFL = 2.2$.}
\label{tab:test1_order}
\end{table}

We also show the effect of the LMPP limiter with a discontinuous initial condition
\beq \label{mpp_plot}
f(x, 0)=\left\{
\begin{array}{lcr}
1,	&& \quad{-1.0	\leq x	\leq -0.5};	\\
0.5,	&& \quad{-0.5	\leq x	\leq 0.0};	\\
-0.5,	&& \quad{0.0	\leq x	\leq 0.5};	\\
-1,   &&\quad{0.5	\leq x	\leq 1.0}.	\\
\end{array} \right.
\eeq
We run the simulation up to $t = 100$ and plot the numerical solution of SL NDG with $P^2$ solution space in Figure~\ref{fig:mpp_plot}.  Oscillations near the discontinuities can be controlled very well when the LMPP limiter is used in Figure~\ref{fig:mpp_plotb} when compared with Figure~\ref{fig:mpp_plotc}. We also note that the global maximum principle preserving limiter designed in \cite{zhang2010maximum} can not control these local oscillations as well as the LMPP limiter~\eqref{eq:mpp_limiter}.

\begin{figure}[htbp]
\centering
\subfigure[with LMPP limiter]{
\centering
\includegraphics[width = 3.in]{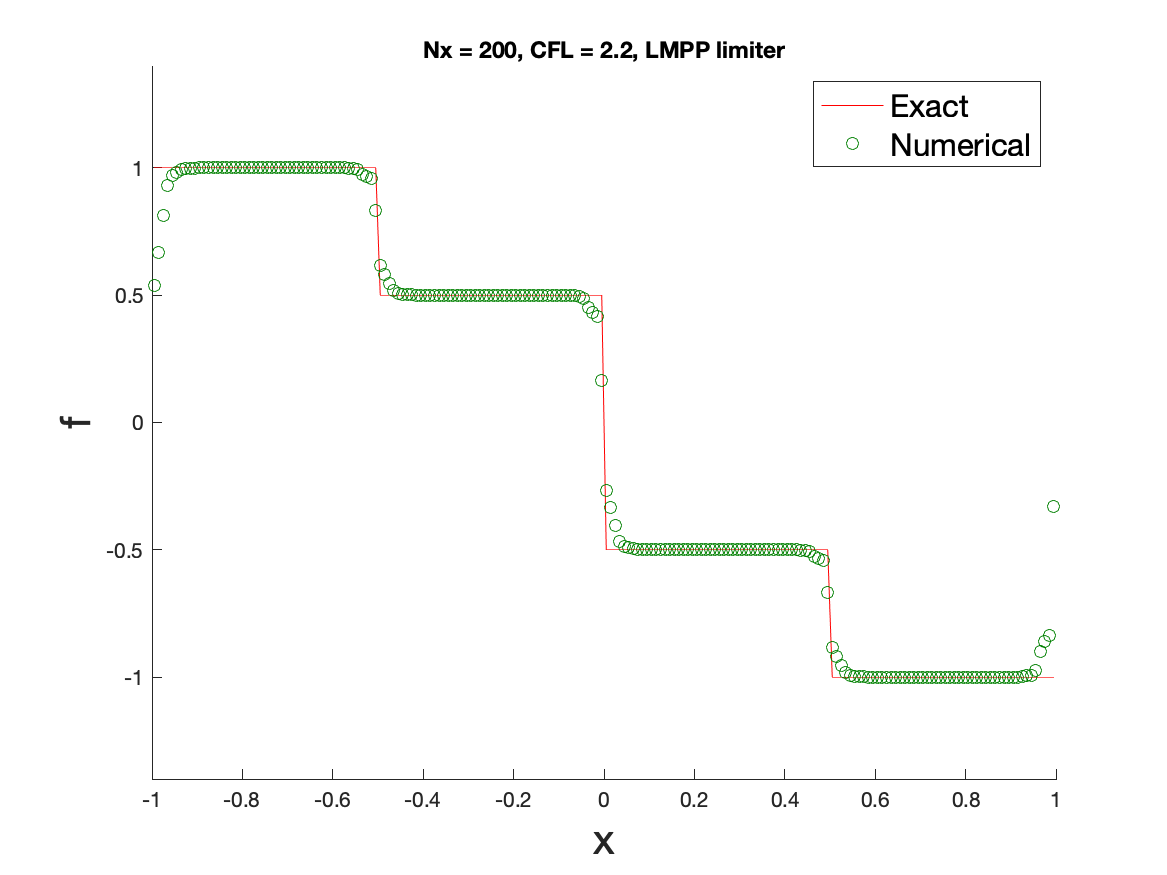}
\label{fig:mpp_plotb}
}
\subfigure[without limiter]{
\centering
\includegraphics[width = 3.in]{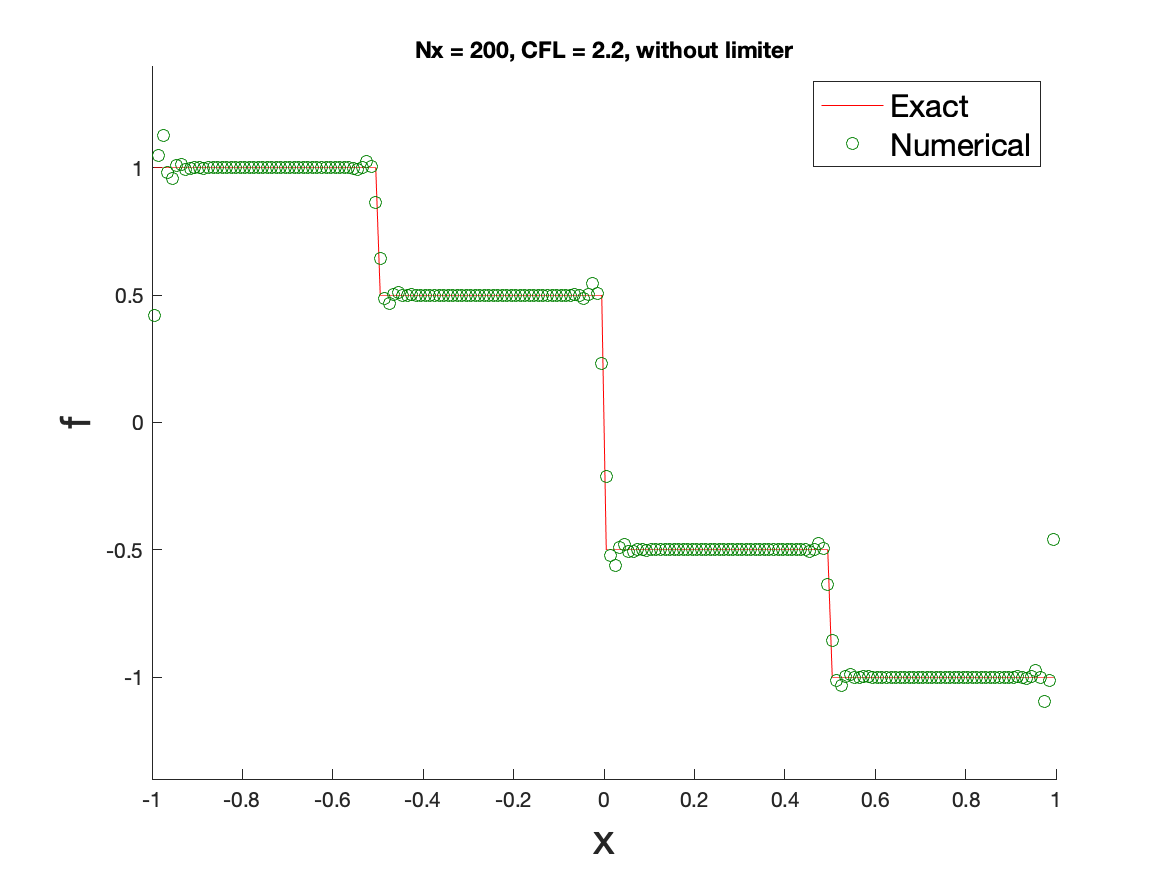}
\label{fig:mpp_plotc}
}
\caption{Example~\ref{exa:limiter} with discontinuous initial data~\eqref{mpp_plot} at $t = 100$. $N_x = 200, CFL = 2.2$. Solid red line: the exact solution; Green circles: cell averages of the $P^2$ SL NDG numerical solutions.}
\label{fig:mpp_plot}
\end{figure}
\end{exa}


\subsection{Accuracy test of the BGK model} \label{accuracy_test}

\begin{exa} \label{exa:consistent}
Consider the test proposed in \cite{pieraccini2007implicit} with the consistent initial distribution
\beq \label{test1}
f(x, v, 0) = \frac{\rho_0}{\sqrt{2\pi T_0}} \exp\left(-\frac{(v-u_0(x))^2}{2 T_0}\right),  \quad  x\in[-1,1]
\eeq
and initial velocity 
\beq \label{test1_init}
u_0 = \frac{1}{10}\left[\exp\left(-(10 x-1)^2\right) - 2 \exp\left(-(10 x+3)^2\right)\right].
\eeq
Initial density and temperature are uniform with constant values $\rho(x, 0) = \rho_0 = 1$ and $T(x, 0) = T_0 = 1$ respectively. The final time of the test is chosen as $t = 0.04$. Since the exact solution is not available, the numerical error is computed using a reference solution at a finer mesh $N_x/2$:
\[
\text{error}_{N_x} = \norm{f_{N_x} - f_{N_x/2}}
\]
where $\norm{\cdot}$ denotes $L^1, L^2$ or $L^{\infty}$ norms.

In Figure~\ref{fig:test1_spatial_order}, 
we show the $L^1$ errors and spatial orders of convergence for the $P^k$ SL NDG scheme with $k=0, 1, 2$. Knudsen numbers are taken to be $\ep = 10^{-2}, 10^{-3}, 10^{-6}$. To reduce the interference of the temporal error, we choose DIRK3 method in Table~\ref{tab:rw_9} and $CFL = 0.1$. The expected $(k+1)$-th order of accuracy are observed for all $\ep$.

\begin{figure}[htbp]
\centering
\subfigure[\scriptsize{$\ep = 10^{-2}$}]{
\centering
\includegraphics[width=2.0in]{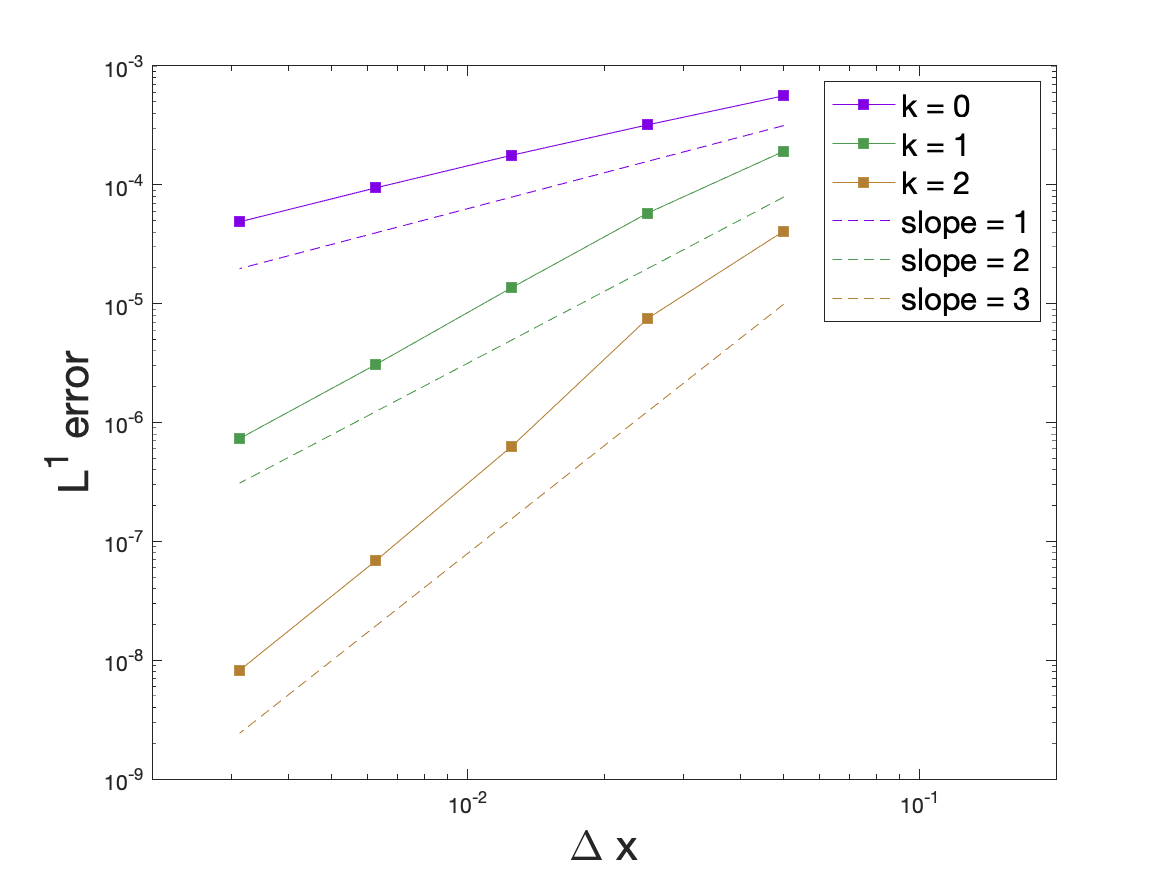}
}%
\subfigure[\scriptsize{$\ep = 10^{-3}$}]{
\centering
\includegraphics[width=2.0in]{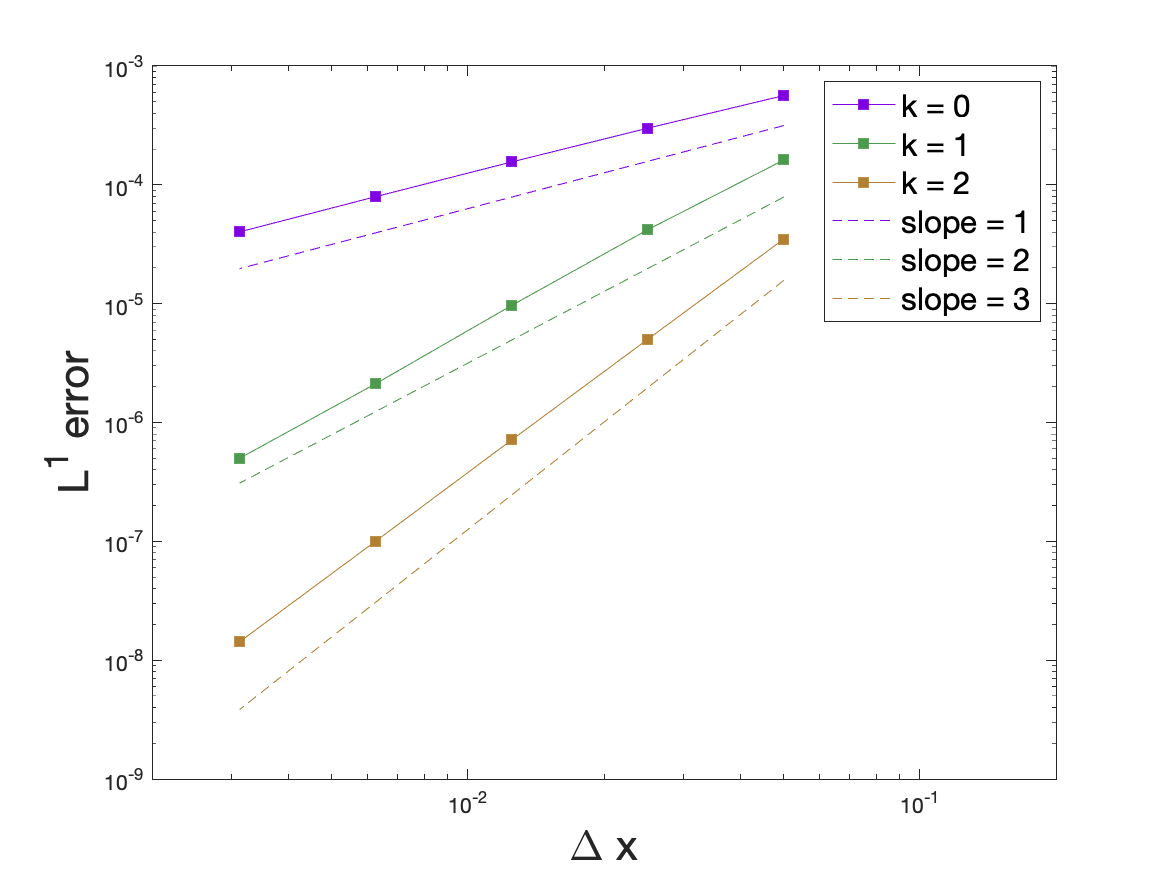}
}%
\subfigure[\scriptsize{$\ep = 10^{-6}$}]{
\centering
\includegraphics[width=2.0in]{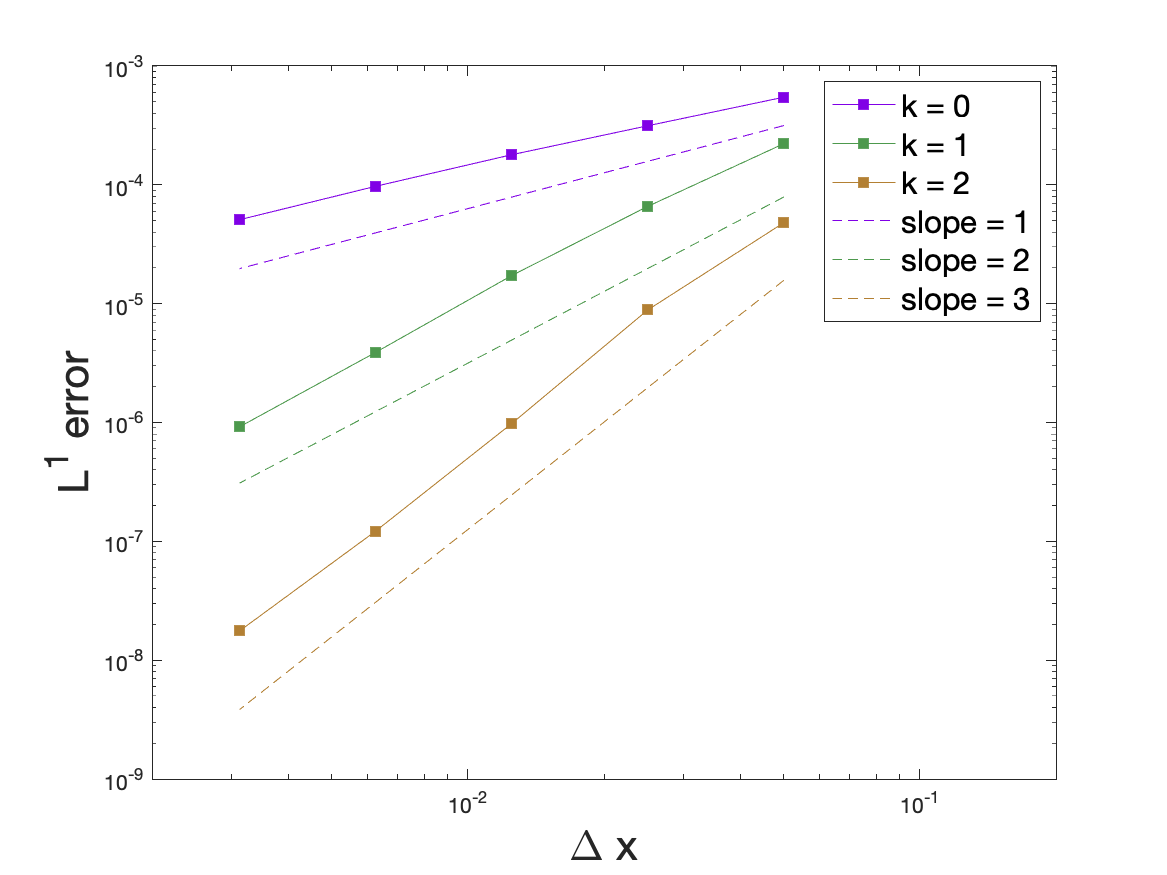}
}%
\caption{Spatial accuracy test for Example~\ref{exa:consistent} with consistent initial data using $CFL = 0.1\ \text{and}\ P^k\ \text{SL NDG with }k = 0, 1, 2$.  DIRK3 method in Table~\ref{tab:rw_9} and LMPP limiter in~\eqref{eq:mpp_limiter} are used. Line segments of slope 1, 2 and 3 are also provided as reference.}
\label{fig:test1_spatial_order}
\end{figure}

The temporal error of the proposed SL NDG methods using LMPP limiter in~\eqref{eq:mpp_limiter} coupled with all time discretizations in both regimes, are presented in Figure~\ref{fig:test1_temp_order}. Meanwhile, we also show the numerical behavior of using DIRK3 method in Table~\ref{tab:rw_9} for the BGK model. We observe that $CFL$ can be chosen as large as $10$ for all time integrators when LMPP limiter is used. This observation supports our claim that our scheme allows extra large $CFL$. When LMPP limiter is used, from the time discretization method perspective, we see that full third-order accuracy is achieved with the DIRK3 method in Table~\ref{tab:rw_9} when $CFL$ is sufficiently large (around $10$). Order reduction exists when $CFL$ is small for DIRK3 method in Table~\ref{tab:rw_9}. This loss in order phenomenon is subject to our future investigation.

\begin{figure}[htbp]
\centering
\subfigure[\scriptsize{$\ep = 10^{-2}$}]{
\centering
\includegraphics[width=3.0in]{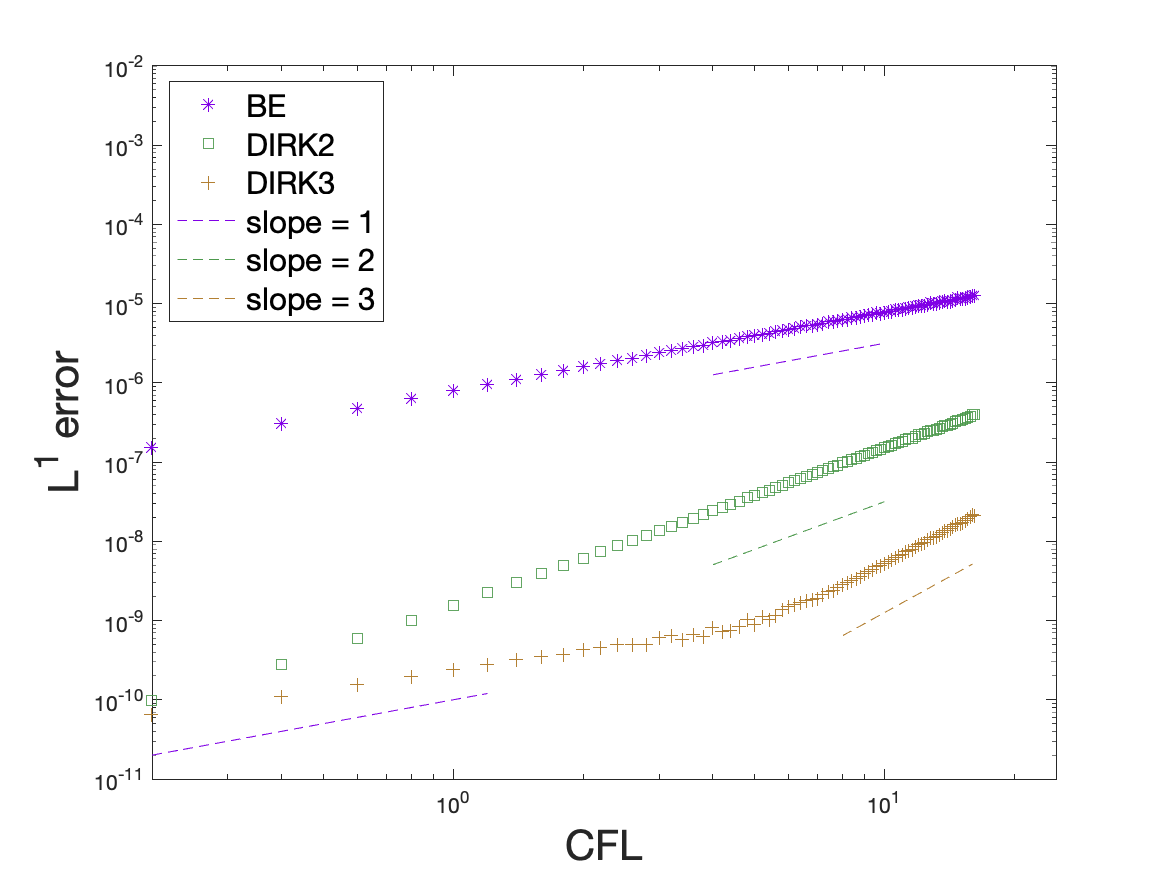}
\label{fig:test1_temp_ordera}
}
\subfigure[\scriptsize{$\ep = 10^{-6}$}]{
\centering
\includegraphics[width=3.0in]{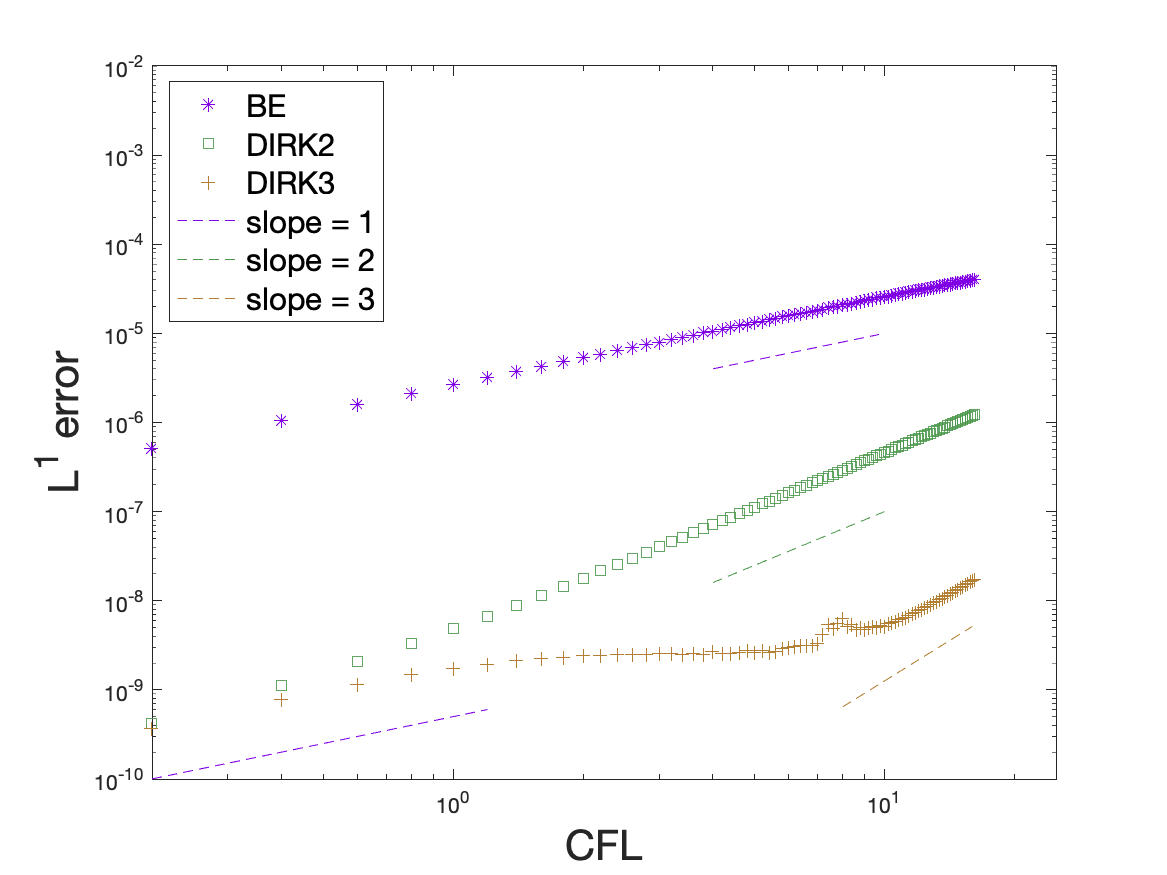}
\label{fig:test1_temp_orderb}
}
\caption{$L^1$ error versus varying $CFL$ using $P^2$ SL NDG with LMPP limiter on fixed mesh $N_x = 640$. $CFL = 0.01$ is used as the reference solution. From top to bottom: backward Euler, DIRK2, DIRK3 methods. Line segments of slope 1, 2 and 3 are also provided as reference. Left: $\ep = 10^{-2}$. Right: $\ep = 10^{-6}$.}
\label{fig:test1_temp_order}
\end{figure}

Table~\ref{tab:conser_consistent} shows us that our scheme preserves the conservation of the macroscopic fields $U$ well within a machine precision error when DIRK2 and DIRK3 methods are used, assuming that sufficiently many grid points are used in velocity space. Similar observation can also be made for other time discretizations.

\begin{table} [!htbp] \scriptsize
\centering
\bigskip
\begin{tabular}{|c | c | c | c | c | c | c| }
\hline
\multicolumn{4}{|c|}{$\ep = 10^{-2}$}&\multicolumn{3}{|c|}{$\ep = 10^{-6}$}\\
\hline
\multicolumn{1}{|c|}{$N_v$}&\multicolumn{1}{|c|}{$\rho$}&\multicolumn{1}{|c|}{$\rho u$}&\multicolumn{1}{|c|}{$E$}&\multicolumn{1}{|c|}{$\rho$}&\multicolumn{1}{|c|}{$\rho u$}&\multicolumn{1}{|c|}{$E$}\\
\hline
\multicolumn{7}{|c|}{DIRK2}\\
\hline
    30 &     4.68E-08 &     1.95E-08 &     8.93E-07 &2.36E-07 &     1.13E-07 &     4.50E-06 \\
   100 &     1.35E-14 &     1.01E-15 &     5.68E-15 &4.05E-12 &     4.42E-15 &     1.96E-12 \\
\hline
\multicolumn{7}{|c|}{DIRK3}\\
\hline
    30 &     4.77E-08 &     1.99E-08 &     9.11E-07 &2.75E-07 &     1.33E-07 &     5.24E-06 \\
   100 &     1.35E-14 &     9.49E-16 &     6.39E-15 &3.16E-12 &     1.13E-15 &     1.54E-12 \\
\hline
\end{tabular}
\caption{Conservation test of the macroscopic fields $U$ for Example~\ref{exa:consistent} with varying $N_v$. $CFL = 4.0, N_x = 80\ \text{using}\ P^2\ \text{SL NDG with LMPP limiter}$.}
\label{tab:conser_consistent}
\end{table}

\end{exa}


\begin{exa} \label{exa:inconsistent}
For the inconsistent initial data, we use the test in \cite{hu2018asymptotic},
\beq \label{test3}
f(x, v, 0) = \frac{\tilde \rho}{\sqrt{2 \pi \tilde T}} \left[0.5 \exp\left(-\frac{(v - \tilde u)^2}{2 \tilde T}\right) + 0.3 \exp\left(-\frac{(v + 0.5\tilde u)^2}{2 \tilde T}\right)\right],	\quad x\in[-1,1]
\eeq
where
\[
\tilde u(x) = 1.0,		\quad	\tilde \rho(x) = 1.0 + 0.2\sin(\pi x),	\quad	\tilde T(x) = \frac{1}{\tilde \rho(x)}.
\]
$f(x, v, 0)$ is linear combination of two Maxwellian distributions centered around different functions, $\tilde u$ and $-0.5\tilde u$. Final simulation time is chosen as $t=0.1$. $L^1$ errors and orders of accuracy of using backward Euler, DIRK2 and DIRK3 methods for $P^2$ SL NDG with LMPP limiter are presented in Figure~\ref{fig:test2_order}. We see expected accuracy behavior in both kinetic and fluid regimes. When $\ep = 10^{-6}$, our scheme is reduced to first order with inconsistent initial data. \footnote{If the initial data is not well-prepared, then \eqref{eq: sch_SL} may reduce to first order. This is similar to the situation of IMEX schemes of type CK. See Theorem 3.6 in \cite{dimarco2013asymptotic} and the discussion afterwards.}

\begin{figure}[htbp]
\centering
\subfigure[\scriptsize{$\ep = 10^{-2}$}]{
\centering
\includegraphics[width=2.0in]{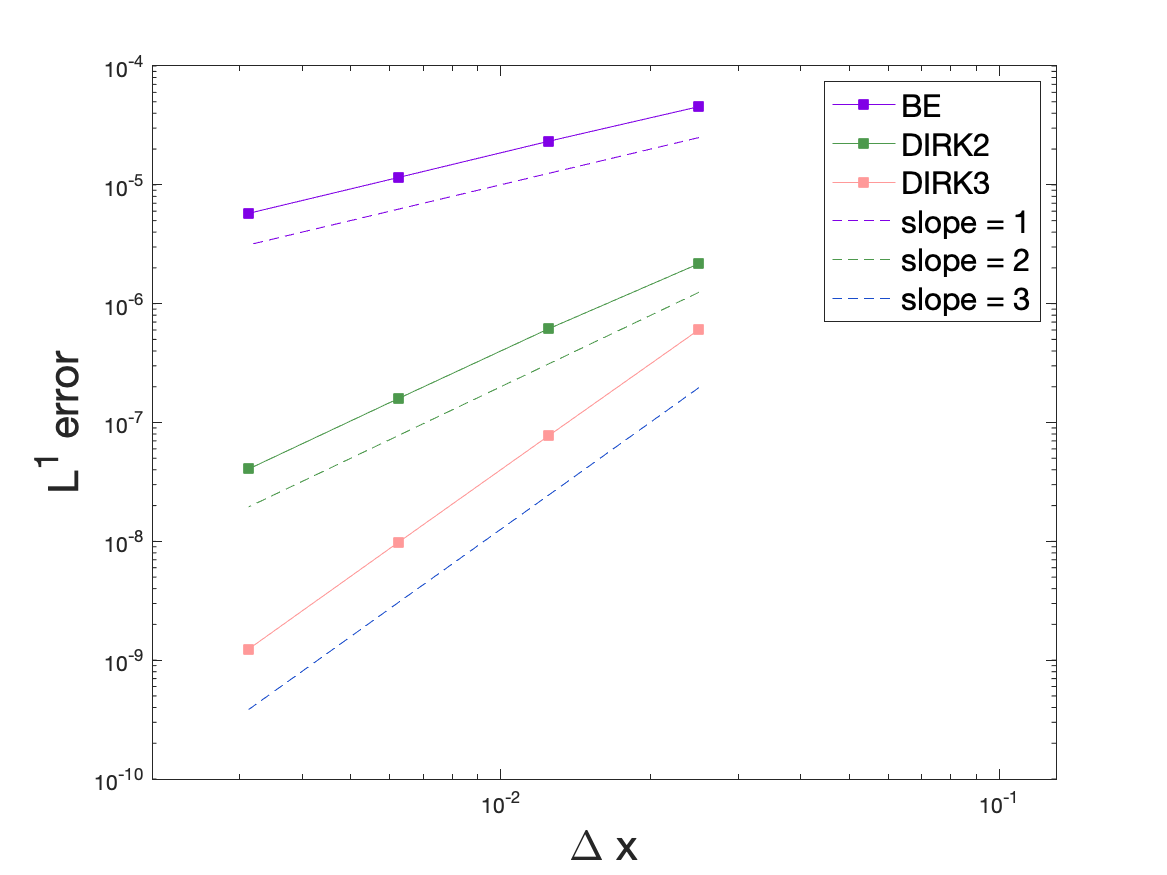}
}%
\subfigure[\scriptsize{$\ep = 10^{-3}$}]{
\centering
\includegraphics[width=2.0in]{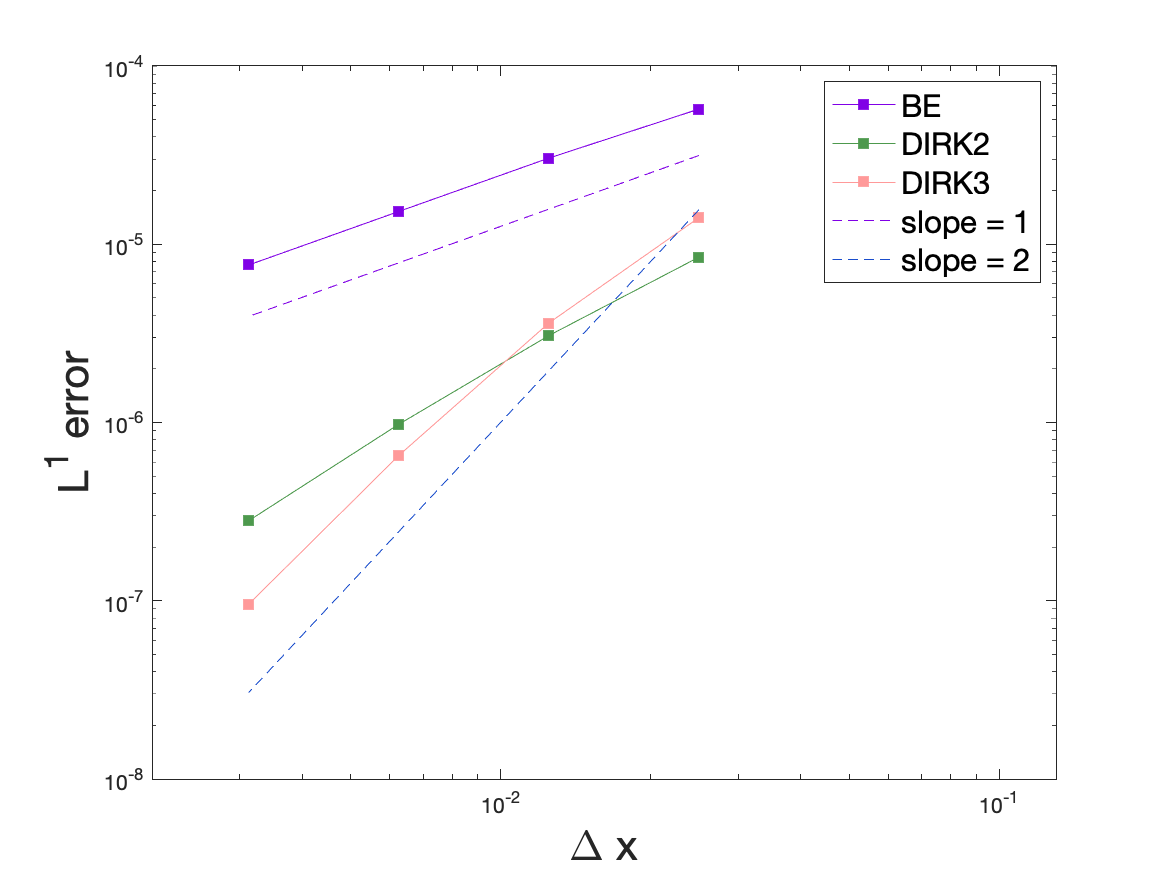}
}%
\subfigure[\scriptsize{$\ep = 10^{-6}$}]{
\centering
\includegraphics[width=2.0in]{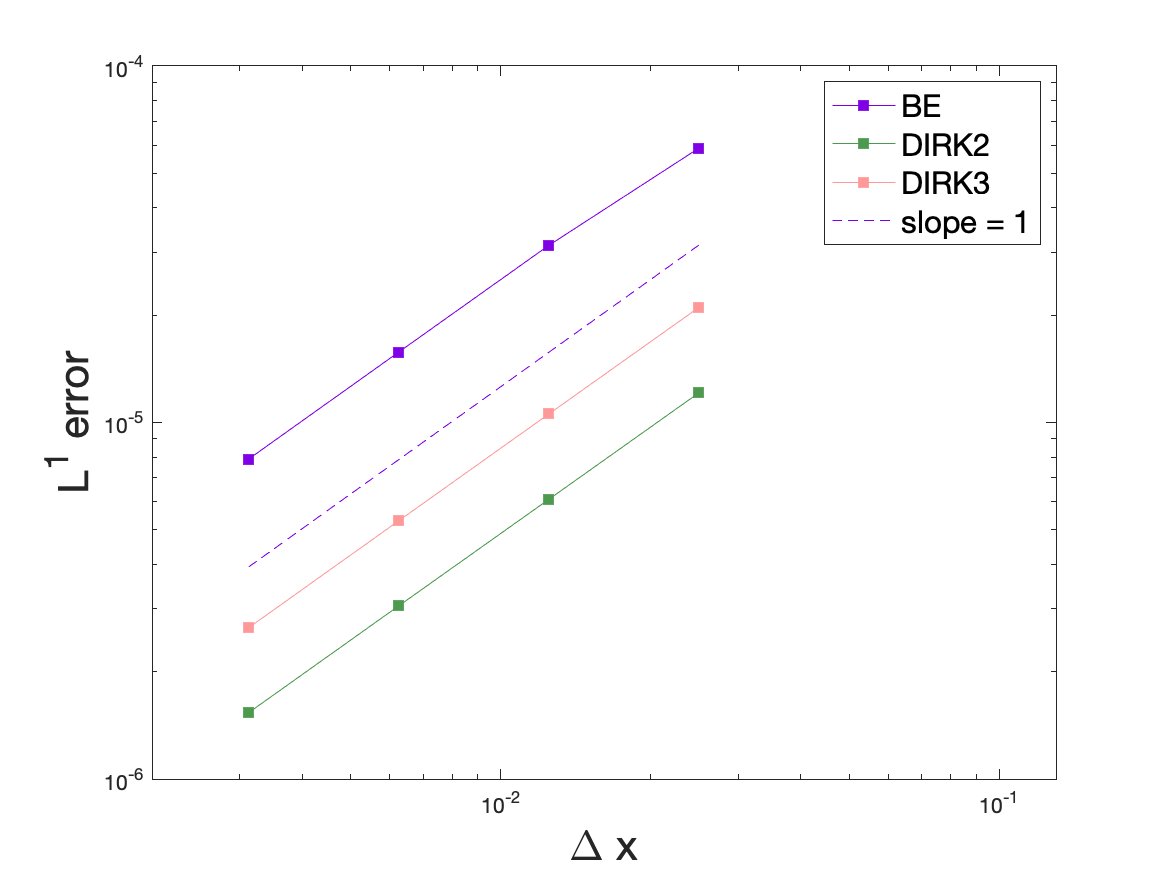}
}%
\caption{Accuracy test for Example~\ref{exa:inconsistent} with inconsistent initial data using $CFL =4.0\ \text{and}\ P^2\ \text{SL NDG with LMPP limiter}$. Line segments of slope 1, 2 and 3 are also provided as reference. From left to right: $\ep = 10^{-2}, 10^{-3}\ \text{and}\ 10^{-6}$.}
\label{fig:test2_order}
\end{figure}

\end{exa}



\subsection{AP property} \label{shock_capture}

\begin{exa}	\label{exa:shock}
Consider the following initial discontinuous distribution used in \cite{pieraccini2007implicit},
\beq \label{test2}
f(x,v,0)=\left\{
\begin{array}{rcl}
\frac{\rho_L}{\sqrt{2\pi T_L}} \cdot \exp(-\frac{(u_L-v)^2}{2 T_L}) & & {0\leq x	\leq 0.5}\\
\frac{\rho_R}{\sqrt{2\pi T_R}} \cdot \exp(-\frac{(u_R-v)^2}{2 T_R}) & & {0.5\leq x	\leq 1}
\end{array} \right.
\eeq
with $(\rho_L, u_L, T_L) = (2.25, 0, 1.125)$ and $(\rho_R, u_R, T_R) = (3/7, 0, 1/6)$. This initial data has discontinuity in physical space. 
In order to check if our scheme is able to capture the Euler limit, we use $\ep = 10^{-6}$ and $P^2$ SL NDG method with $N_x = 200$. We also assume the free-flow boundary condition and do the simulation up to the final time $t=0.16$ with $CFL = 2.3$. In Figure~\ref{fig:shock}, we see the shock and rarefaction wave are captured well when using backward Euler and DIRK3 method. Numerical portraits for DIRK2 agrees with the ones for DIRK3 method.

\begin{figure}[htbp]
\centering
\includegraphics[width=3.0in]{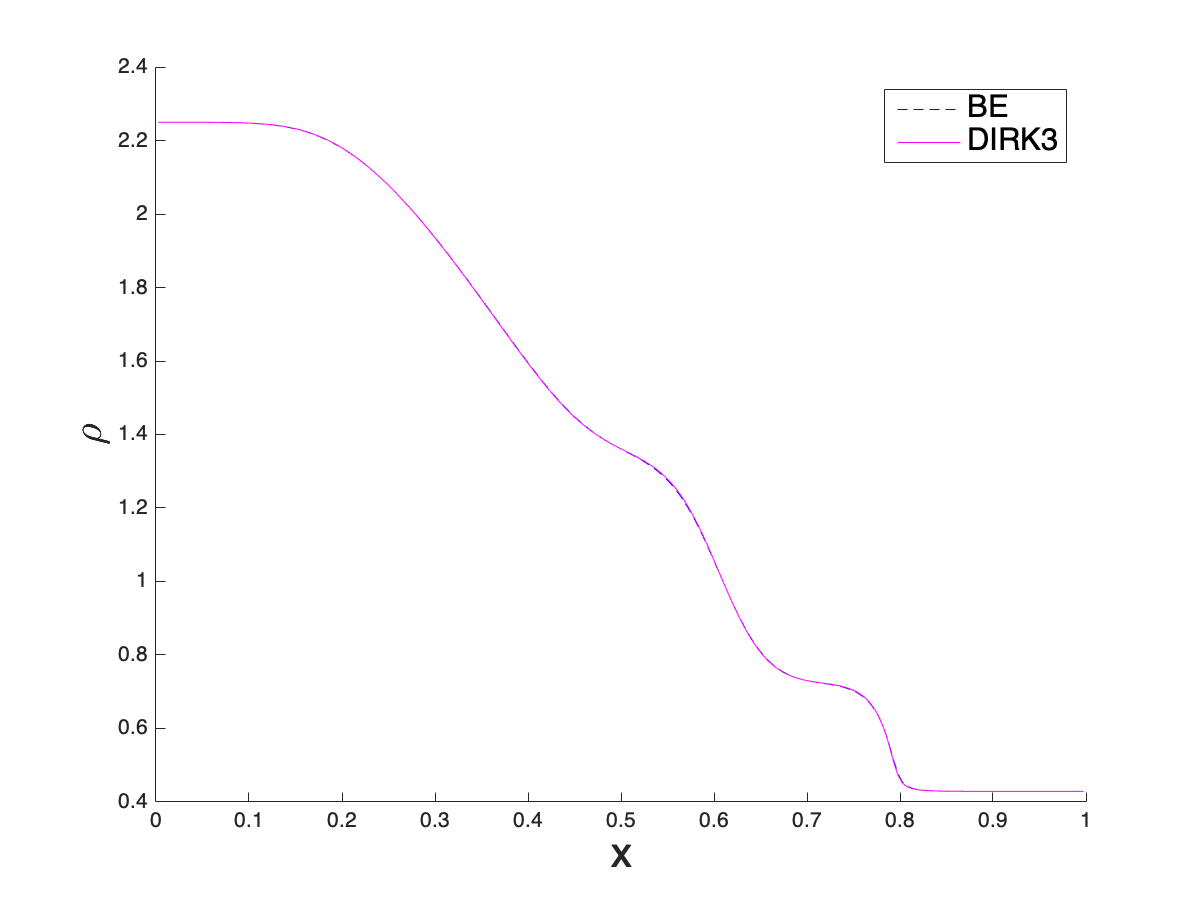}
\includegraphics[width=3.0in]{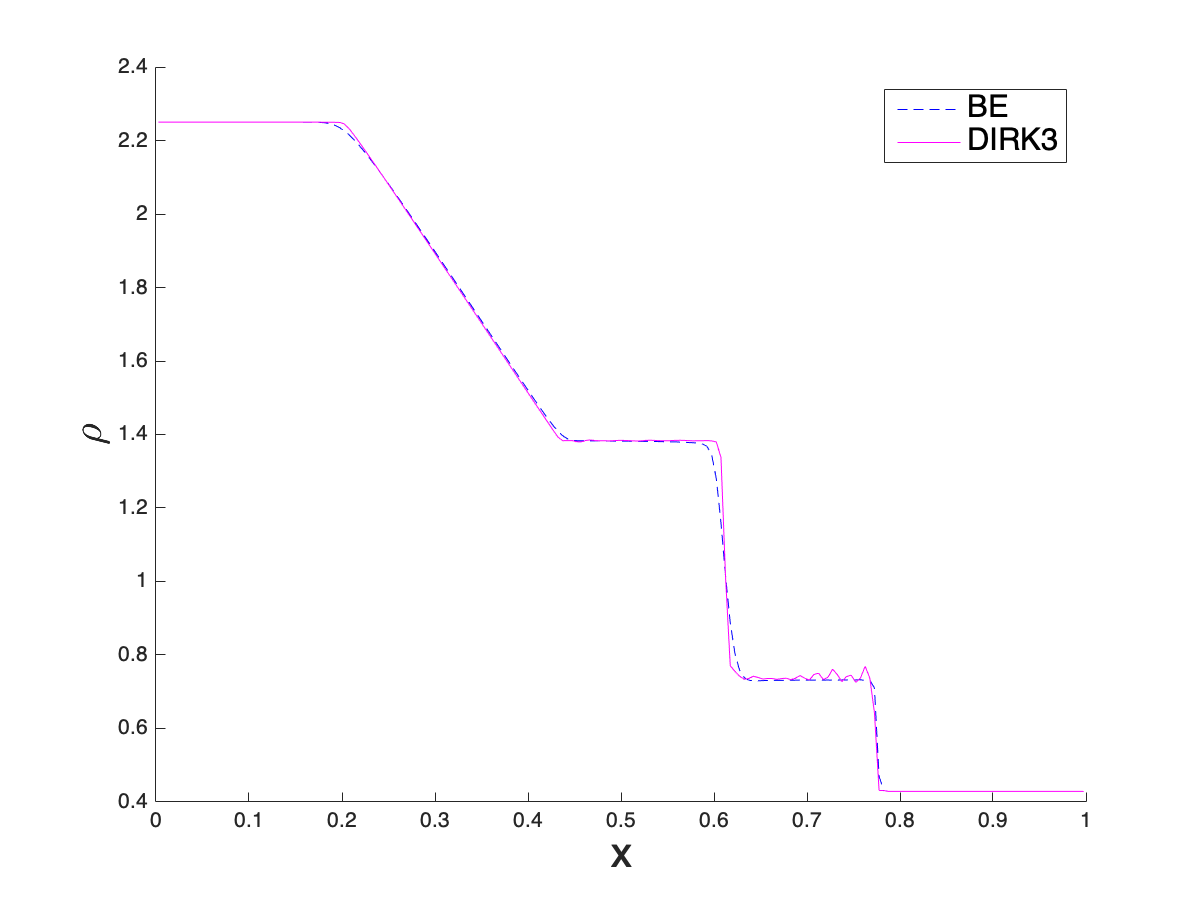}

\includegraphics[width=3.0in]{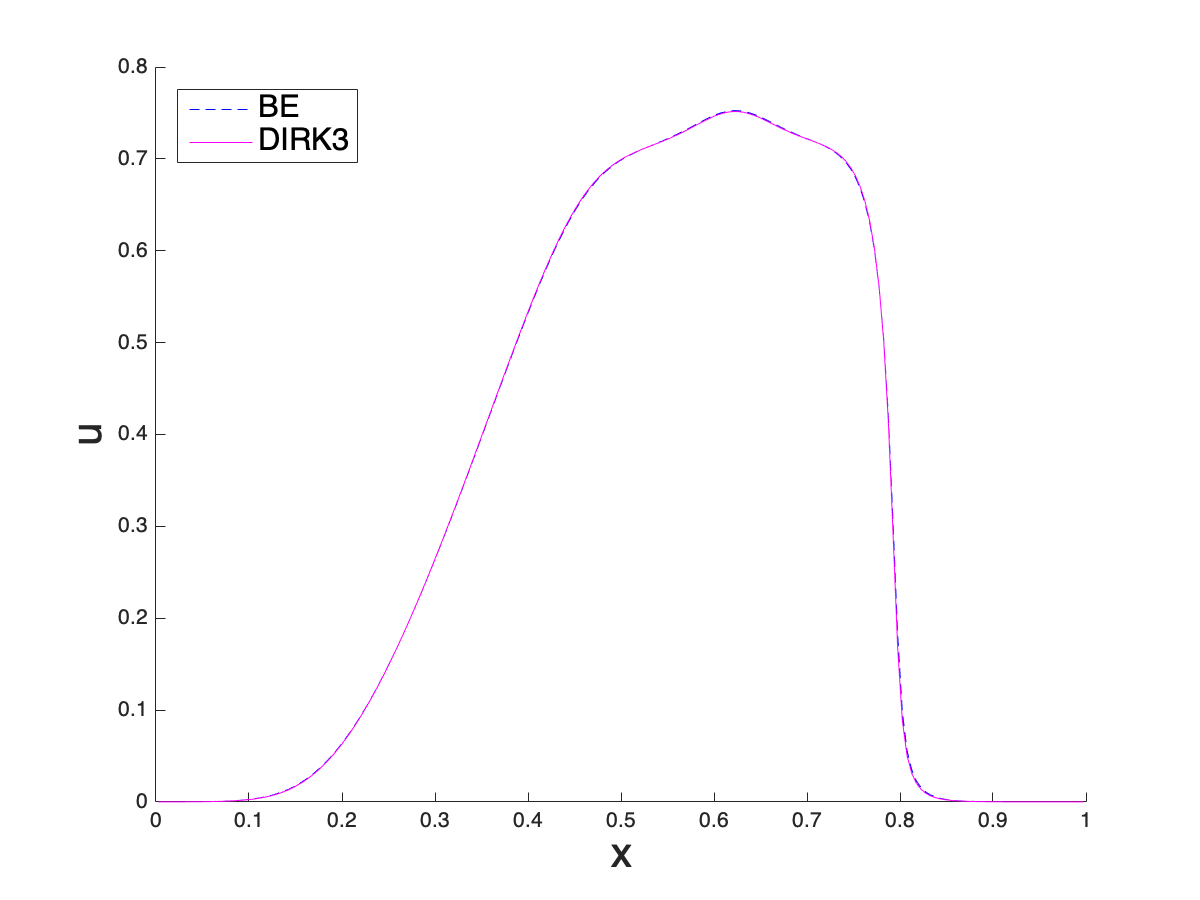}
\includegraphics[width=3.0in]{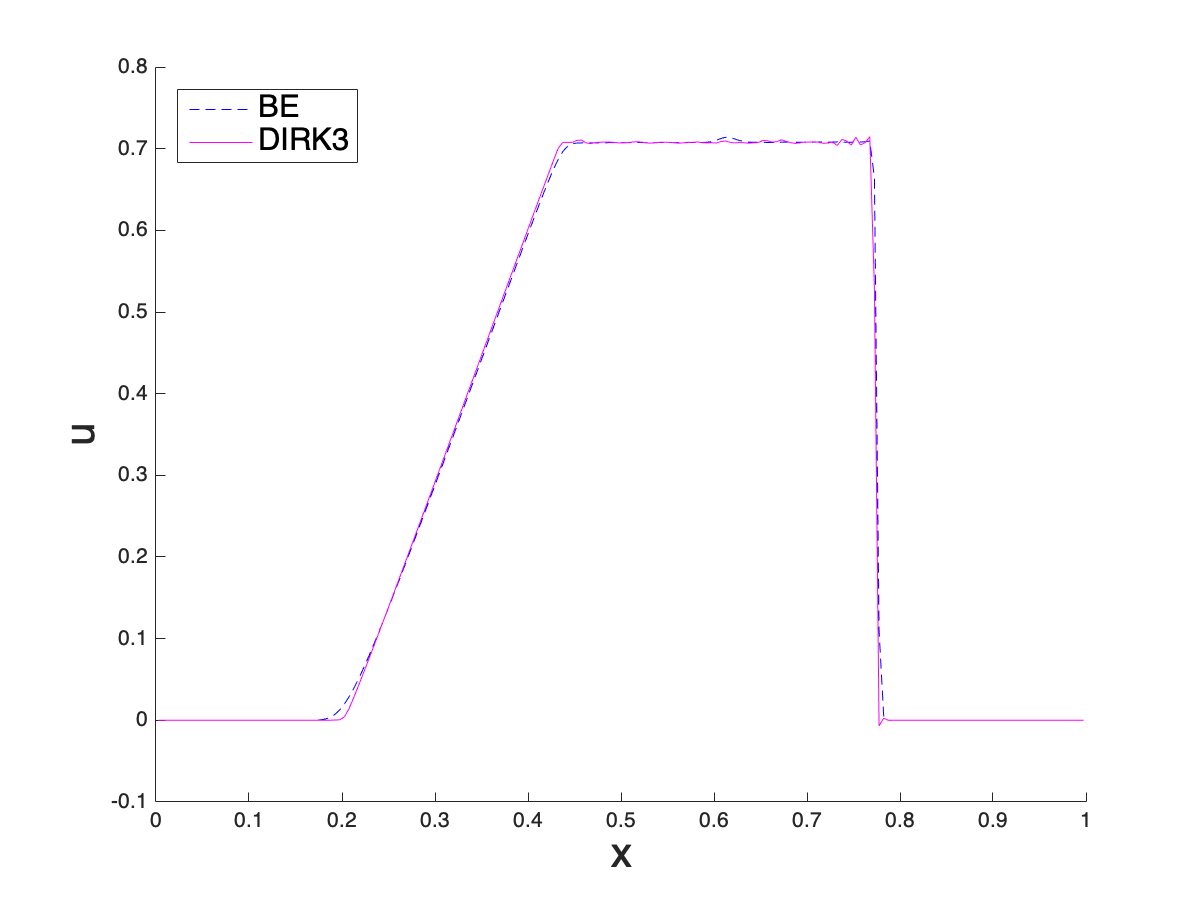}

\includegraphics[width=3.0in]{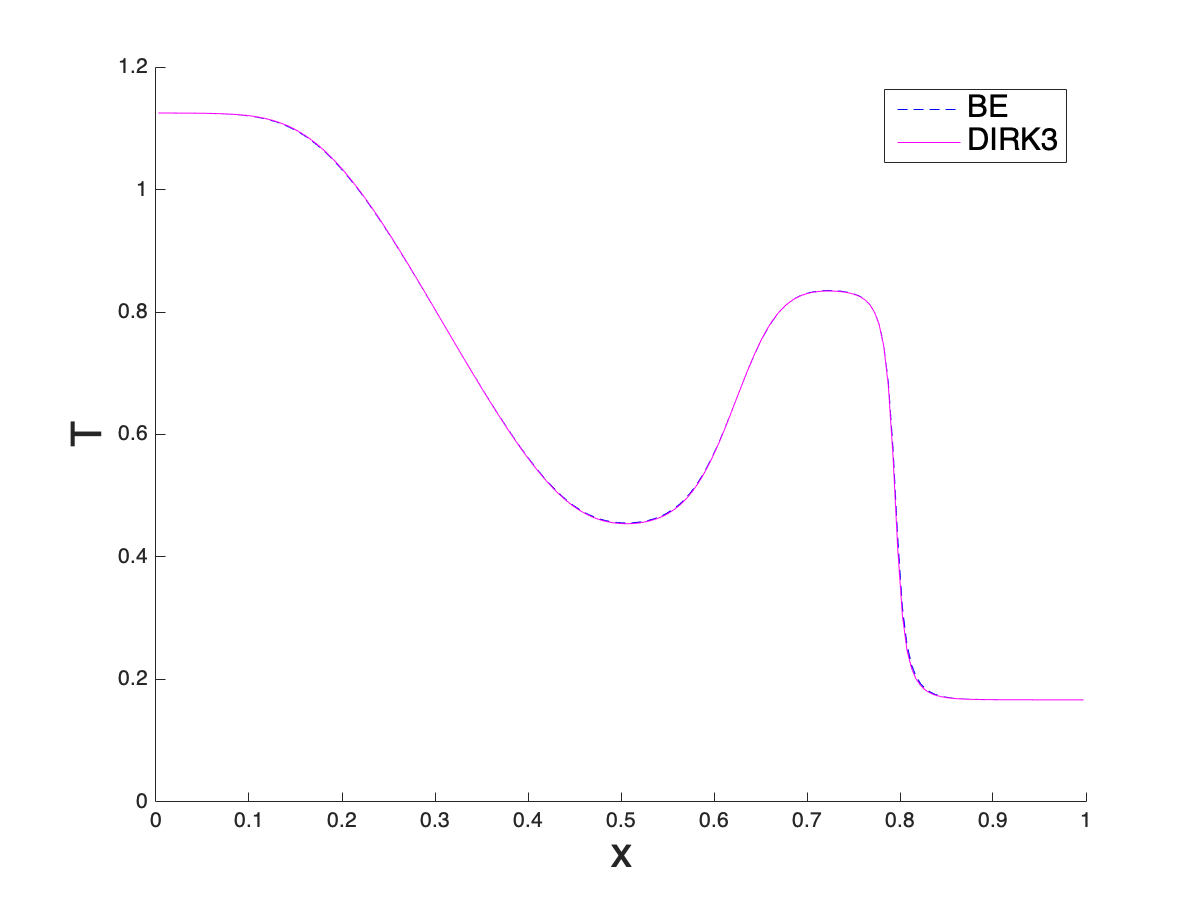}
\includegraphics[width=3.0in]{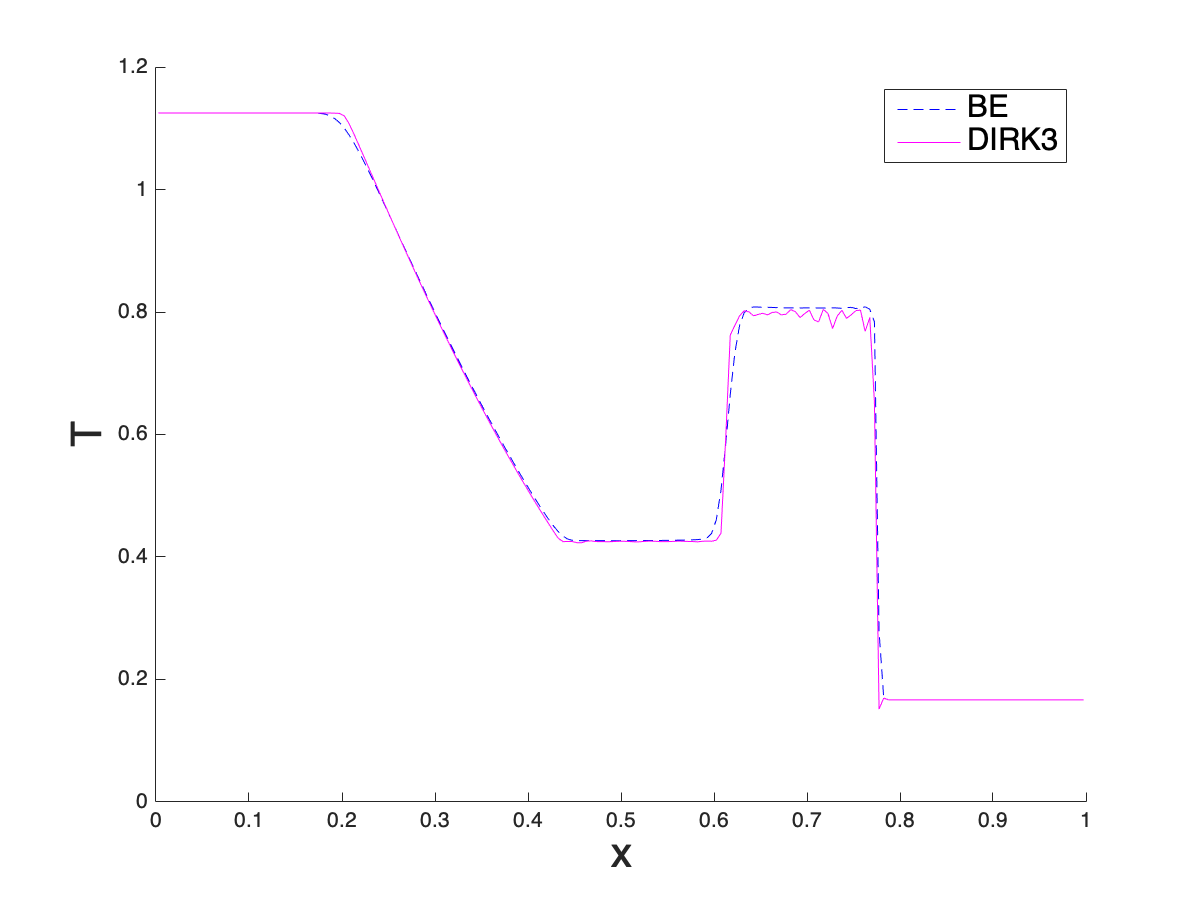}
\caption{Physical profiles for Example~\ref{exa:shock} at final time $t=0.16$ with $CFL = 2.3$, using $P^2$ SL NDG method on $N_x = 200$. LMPP limiter~\eqref{eq:mpp_limiter} is applied. Left: $\ep = 10^{-2}$. Right: $\ep = 10^{-6}$. From the top to bottom: density $\rho$, mean velocity $v$ and temperature $T$. Blue dashed line: backward Euler; red solid line: DIRK3.}
\label{fig:shock}
\end{figure}

\end{exa}


\begin{exa} \label{exa:variable_ep}
Finally, we consider an example in \cite{xiong2017hierarchical} with a variable $\ep(x)$
\beq \label{eq:var_ep}
\ep(x) = 10^{-6} + \f{1}{2}\left( \tanh(1-a_0 x) + \tanh(1+a_0 x) \right)
\eeq
and $a_0$ to be chosen. The inconsistent initial data is given as
\[
f(x,v,0) = \frac{\tilde \rho}{2\sqrt{2\pi \tilde T}} \left[\exp\left(-\frac{(v-\tilde u)^2}{2 \tilde T}\right) + \exp\left(-\frac{(v+0.5\tilde u)^2}{2 \tilde T}\right)\right],	\quad x\in[-0.5,0.5]
\]
with
\[
\tilde \rho(x) = 1+0.875\sin(2\pi x),	\quad \tilde T(x) = 0.5+0.4\sin(2\pi x),	\quad	\tilde u(x) = 0.75.
\]
From \eqref{eq_U_bgk}, we have the initial macroscopic vairables
\[
\rho^0 = \tilde \rho(x),	\quad T^0 = \tilde T(x)^2 + \tilde u(x)^2,	\quad u^0 = 0.
\]
When $a_0 = 11$ or $40$, see Figure~\ref{fig:epsilon}, the problem is in a mixed regime: in the middle portion of $[-0.5, 0.5]$, the problem is in the kinetic regime since $\ep(x) = O(1)$; while in the left and right portions, the problem is in the fluid regime since $\ep(x) \approx 10^{-6}$. We can also see that $a_0 = 11$ gives a wider peak of $\ep(x)$.

$CFL = 4.0$ is used for all of the following tests. In Figure~\ref{fig:a11}, we choose $a_0 = 11$ and show the distribution of density $\rho$, velocity $v$ and temperature $T$ at time $t = 0.1, 0.3, 0.45$ with $N_x = 40$. We compare our results with a reference solution computed by the hierarchical high order NDG3-IMEX scheme in \cite{xiong2017hierarchical} with $N_x = 200$ and $N_v = 200$. The performances of DIRK2 method are comparable with those given by DIRK3 method. It is clear that the results of DIRK3 method match the reference solutions much better than backward Euler method, while discontinuities can be observed in the solution for all methods. In Table~\ref{tab:test4_spatial_order}, we also show the $L^1$ errors and order of accuracy at a short time $t = 0.001$. For backward Euler and DIRK2 methods, first and second order of accuracy can be observed clearly. While there is loss of accuracy on refined meshes due to the mixed regimes, which is beyond the scope of this paper. In Table~\ref{tab:conser_epx}, we see our proposed scheme is mass conservative for the mixed regime problem when $N_v$ is large.

We also test with $a_0 = 40$ which gives narrower peak of $\ep(x)$. From Figure~\ref{fig:a40}, we see the discontinuities are again well observed. Again, results by DIRK3 method are closer to the reference solutions than those by backward Euler.

\begin{figure}[htbp]
\centering
\includegraphics[width=2.5in]{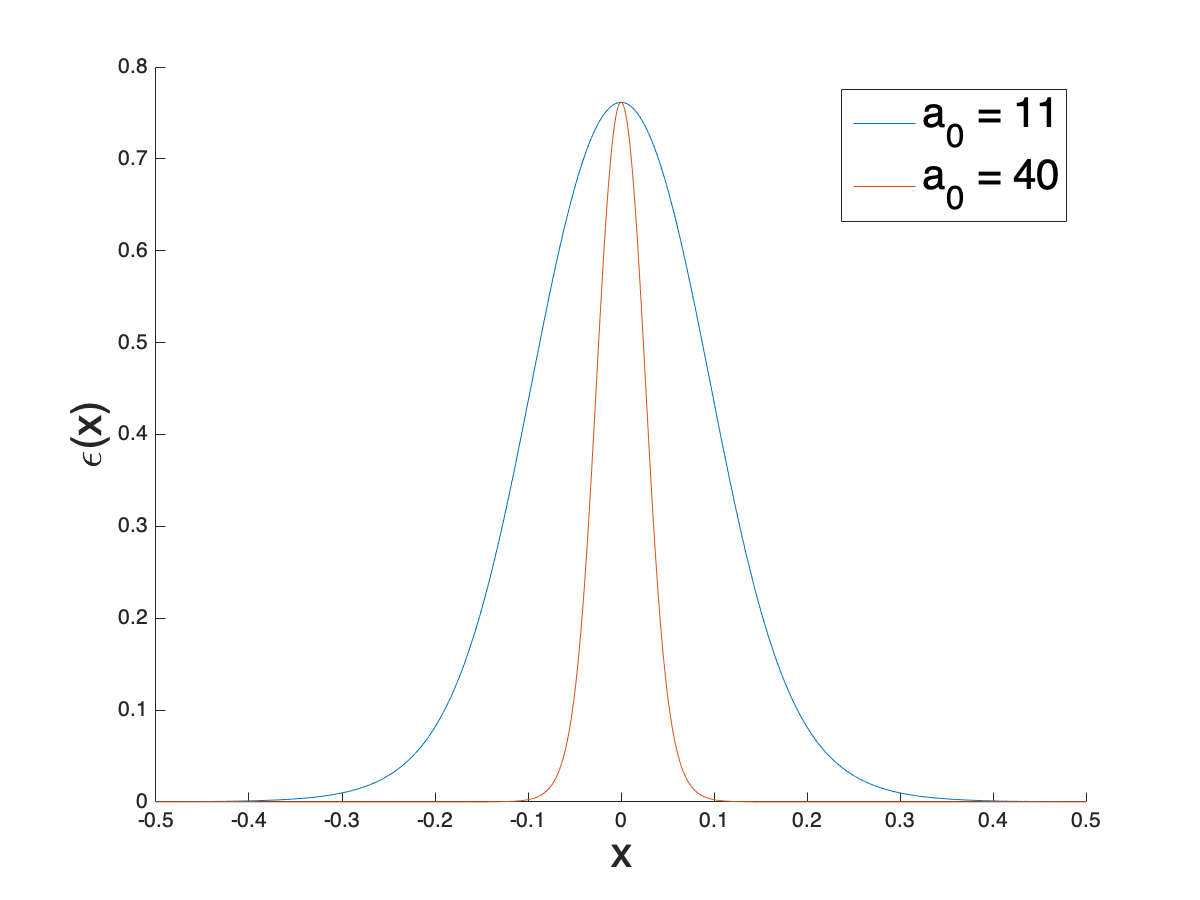}
\caption{Variable $\ep(x)$ for Example~\ref{exa:variable_ep}.}
\label{fig:epsilon}
\end{figure}

\begin{figure}[htbp]
\centering
\includegraphics[width=2.1in]{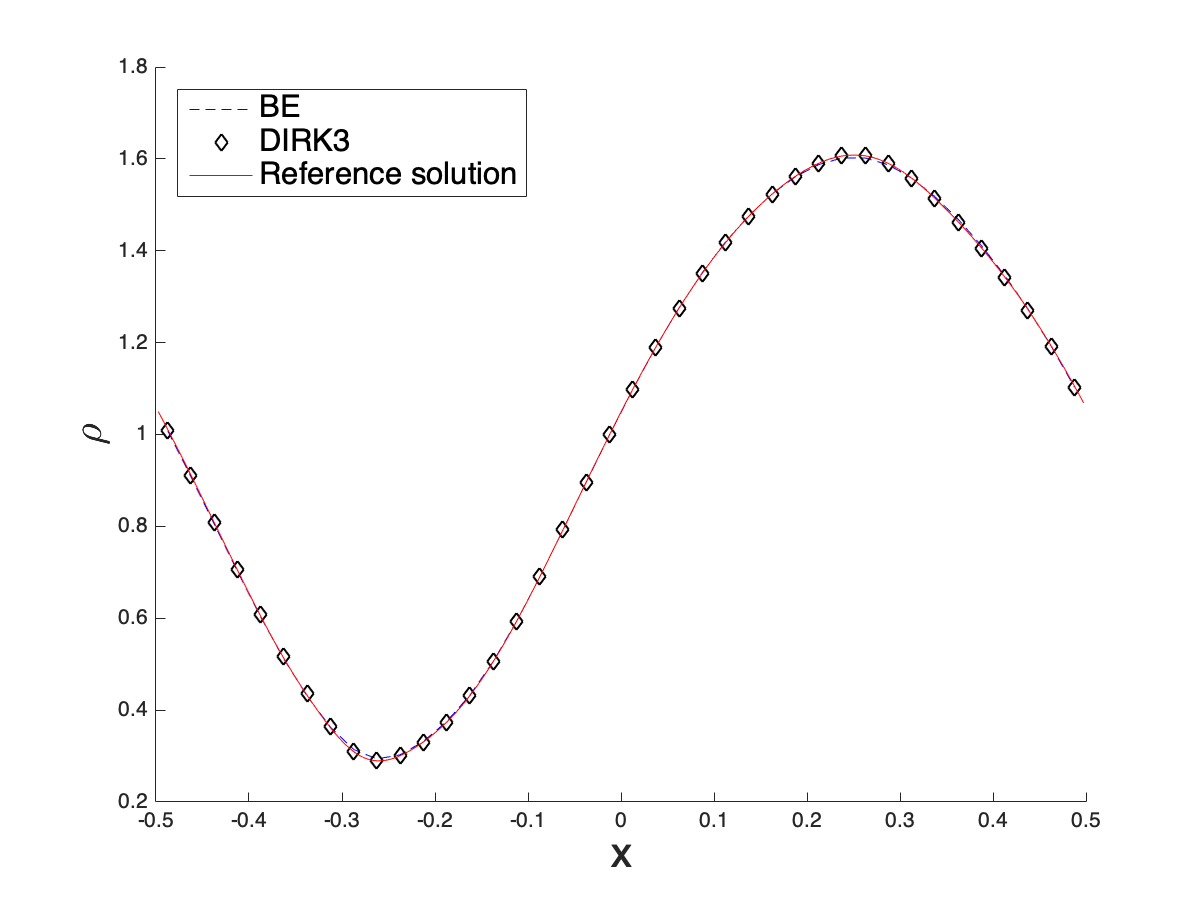}
\includegraphics[width=2.1in]{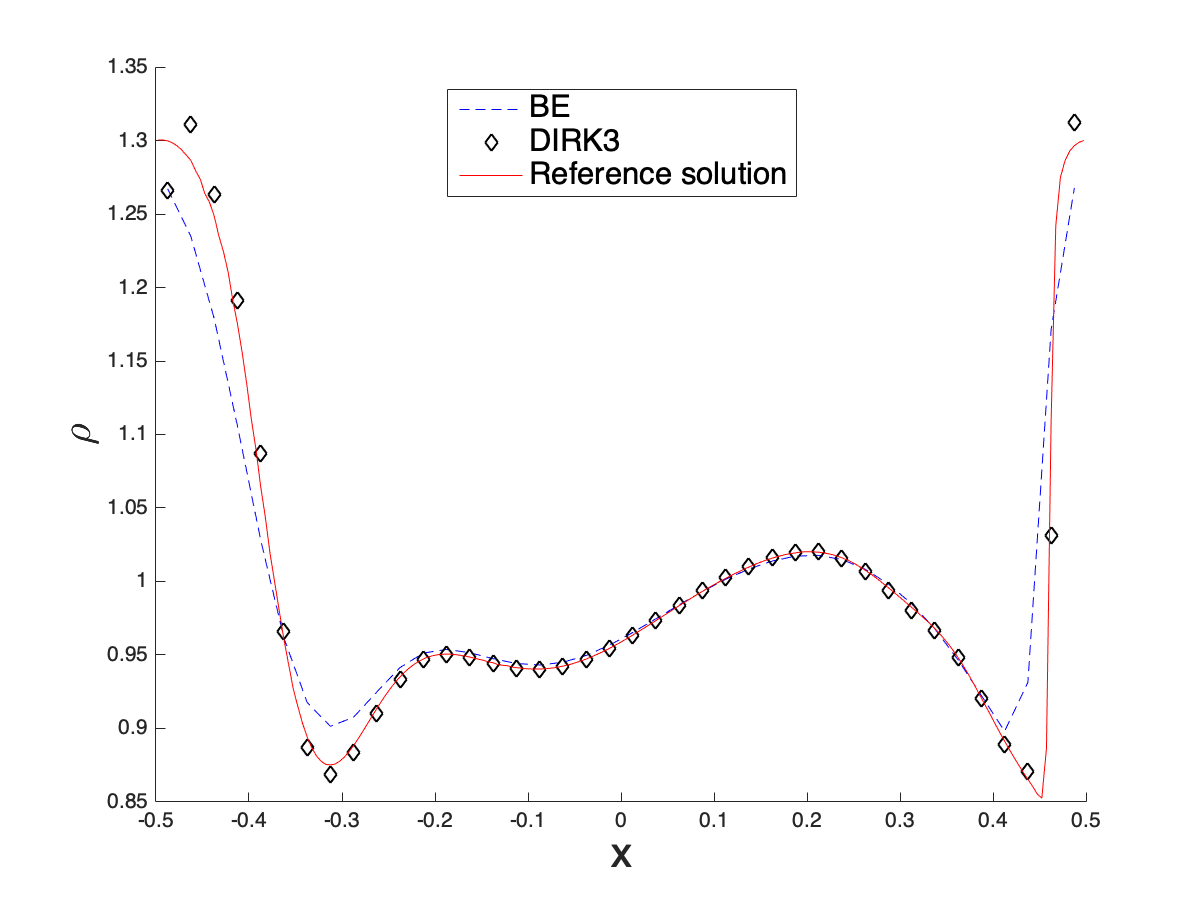}
\includegraphics[width=2.1in]{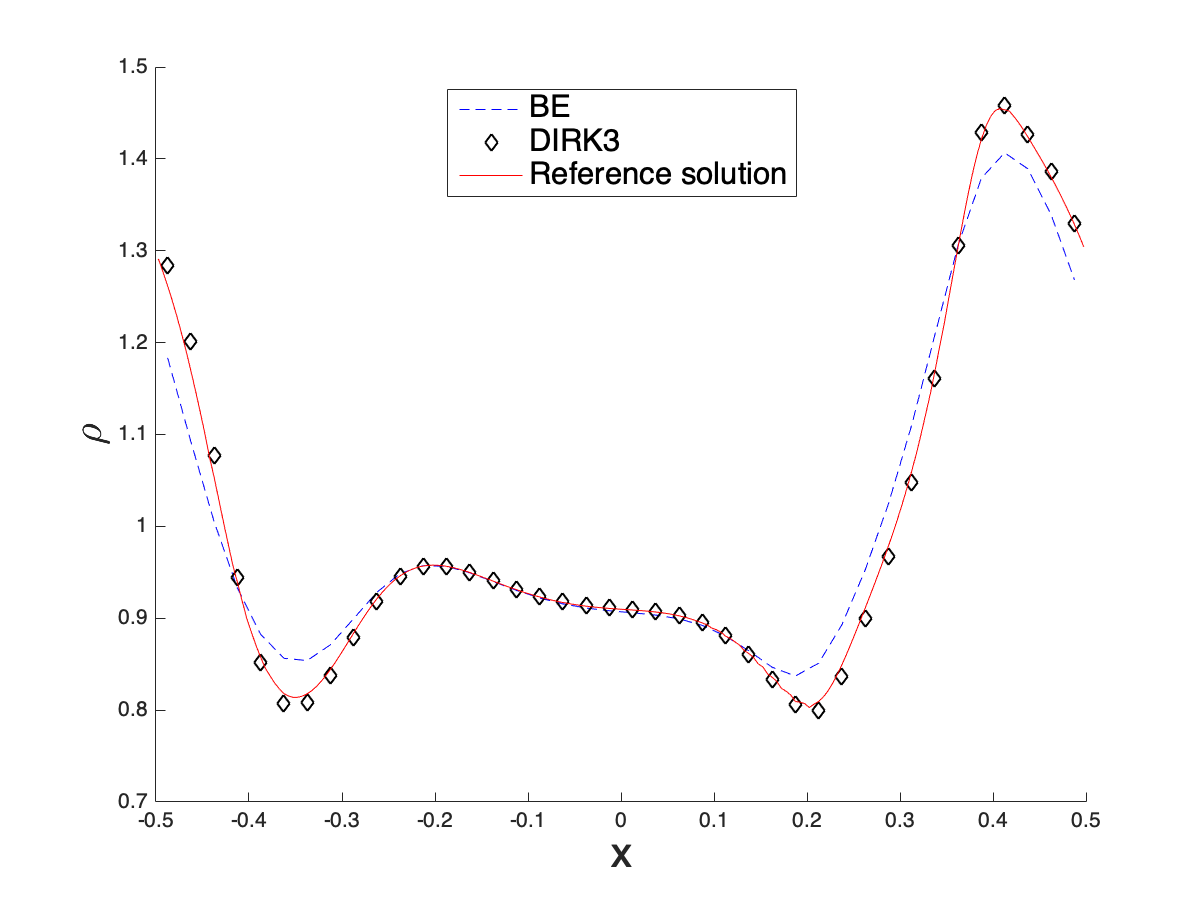}

\vspace{0.5cm}
\includegraphics[width=2.1in]{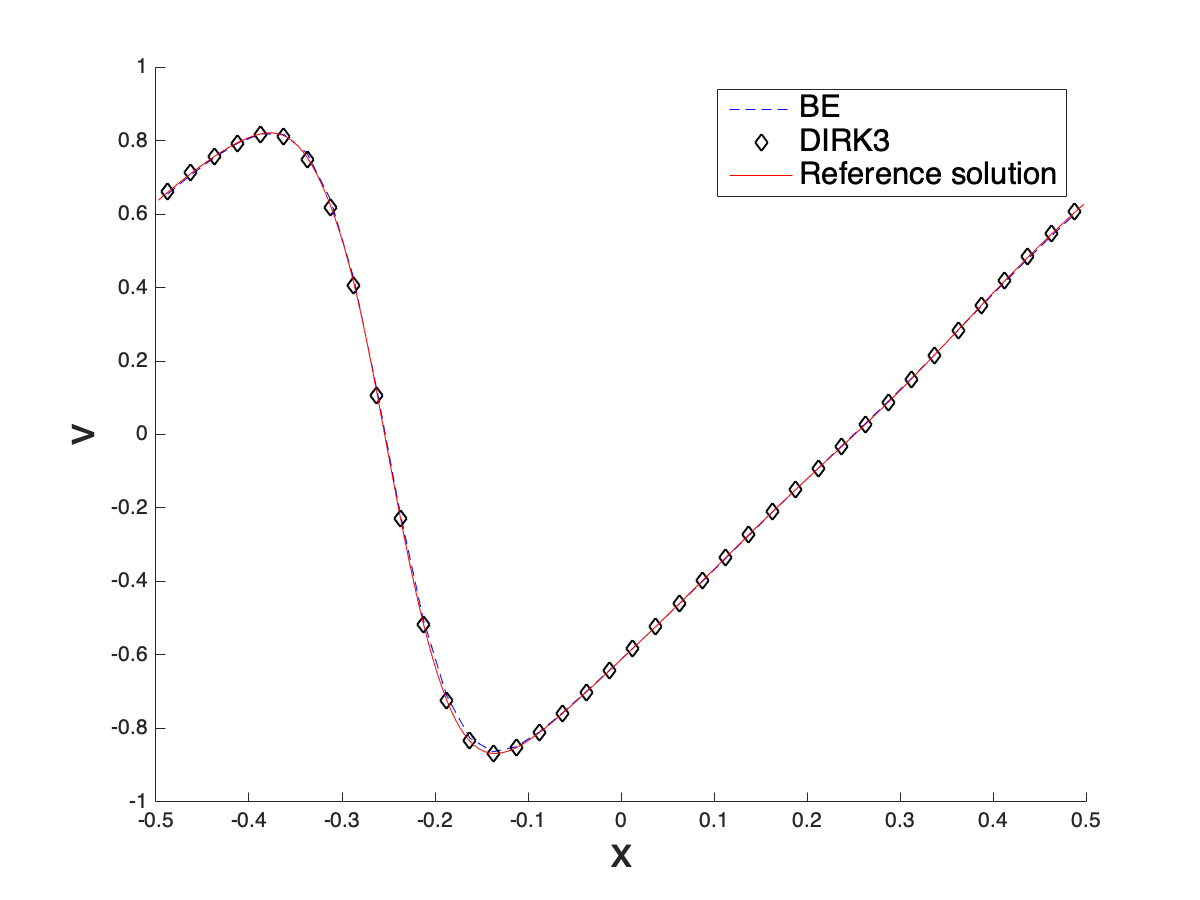}
\includegraphics[width=2.1in]{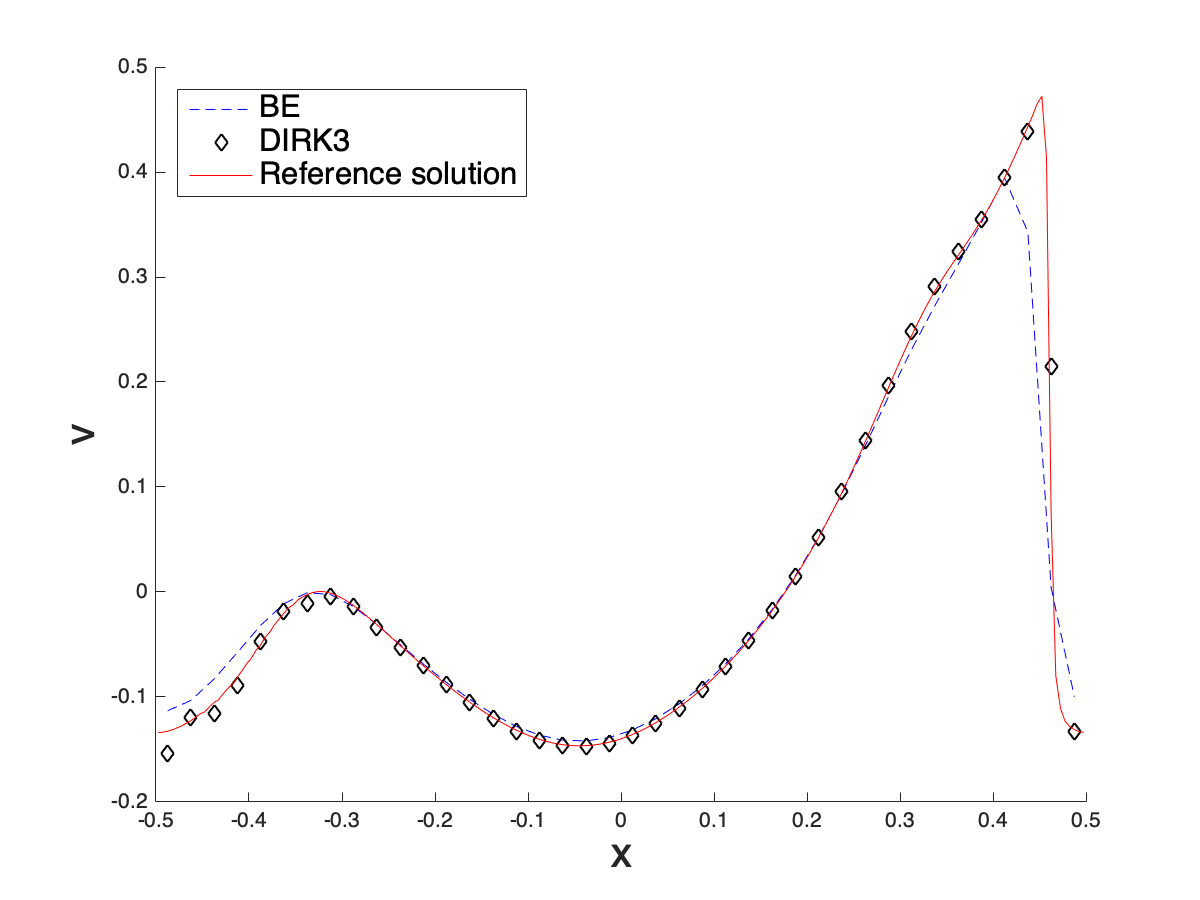}
\includegraphics[width=2.1in]{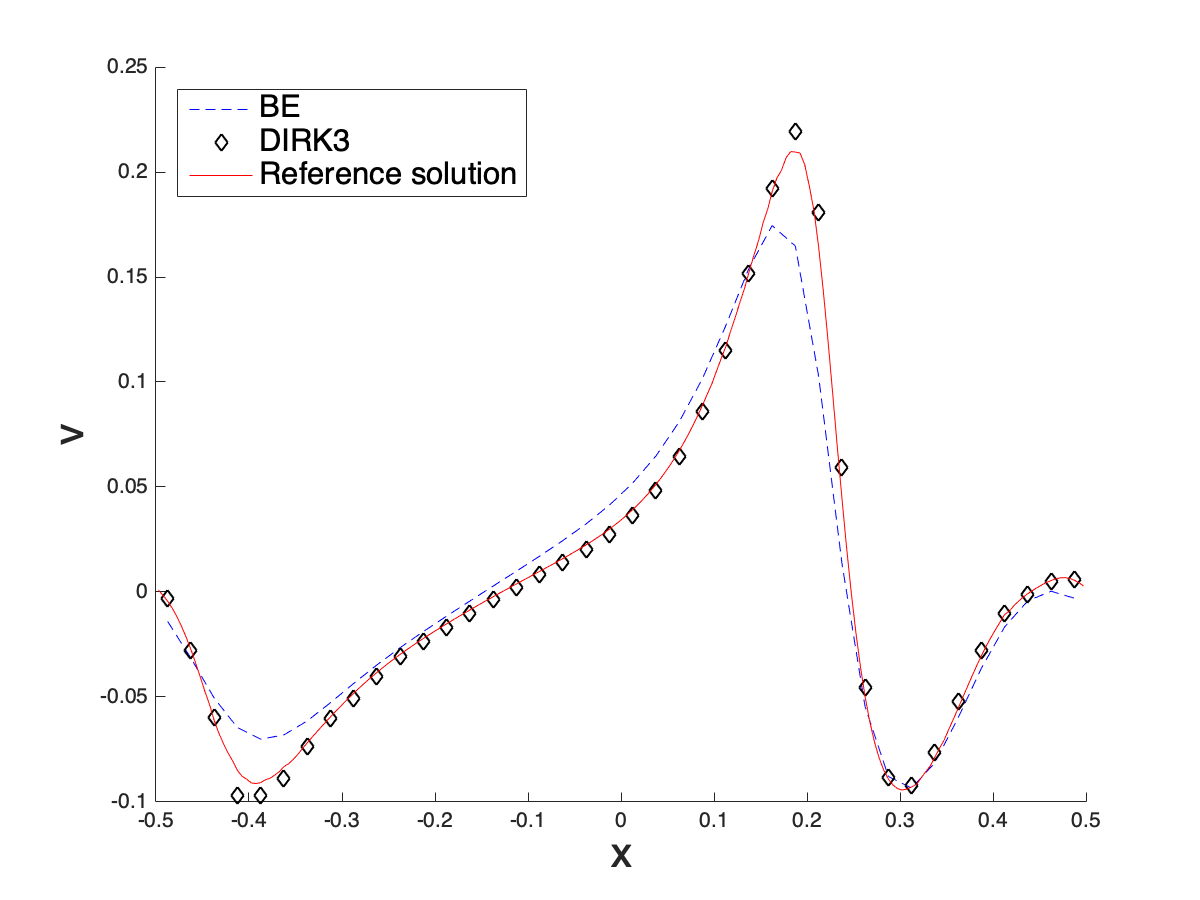}

\vspace{0.5cm}
\includegraphics[width=2.1in]{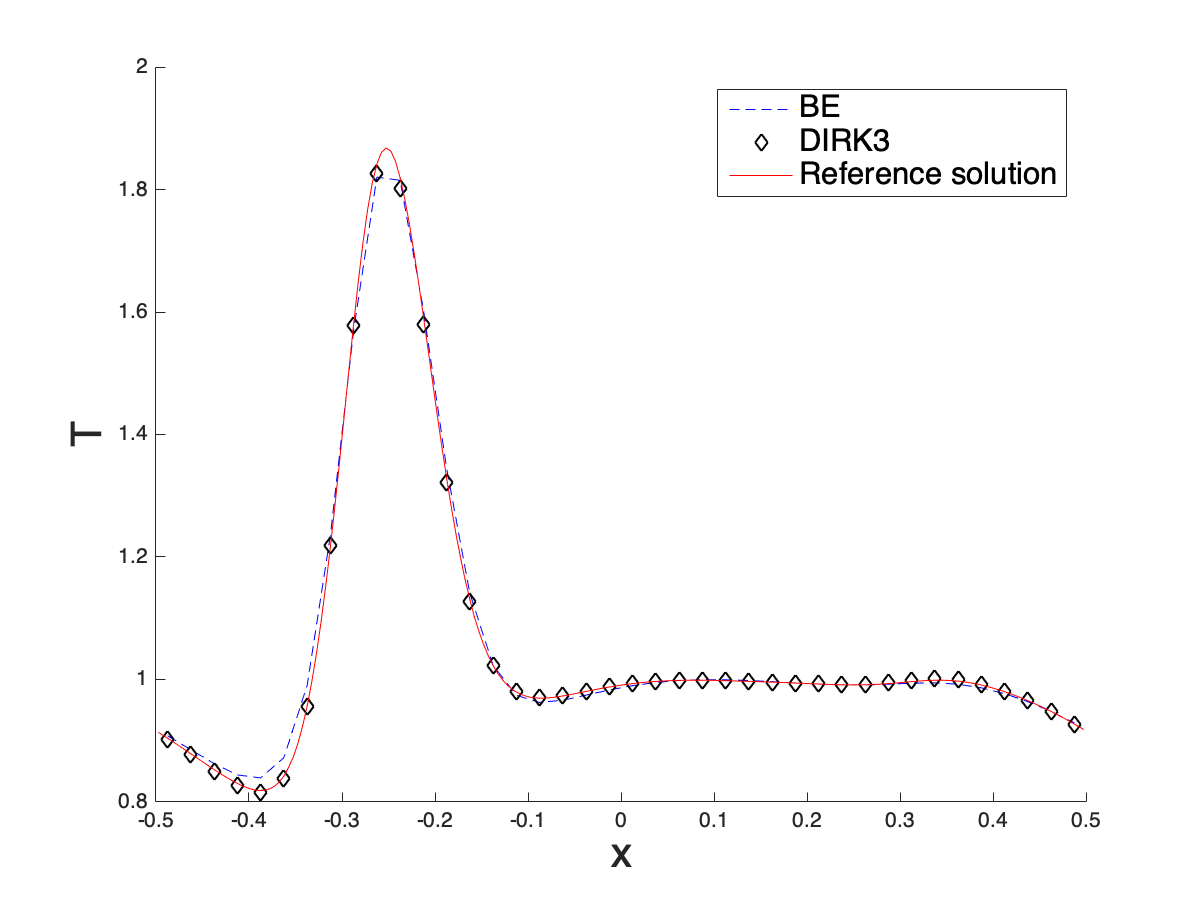}
\includegraphics[width=2.1in]{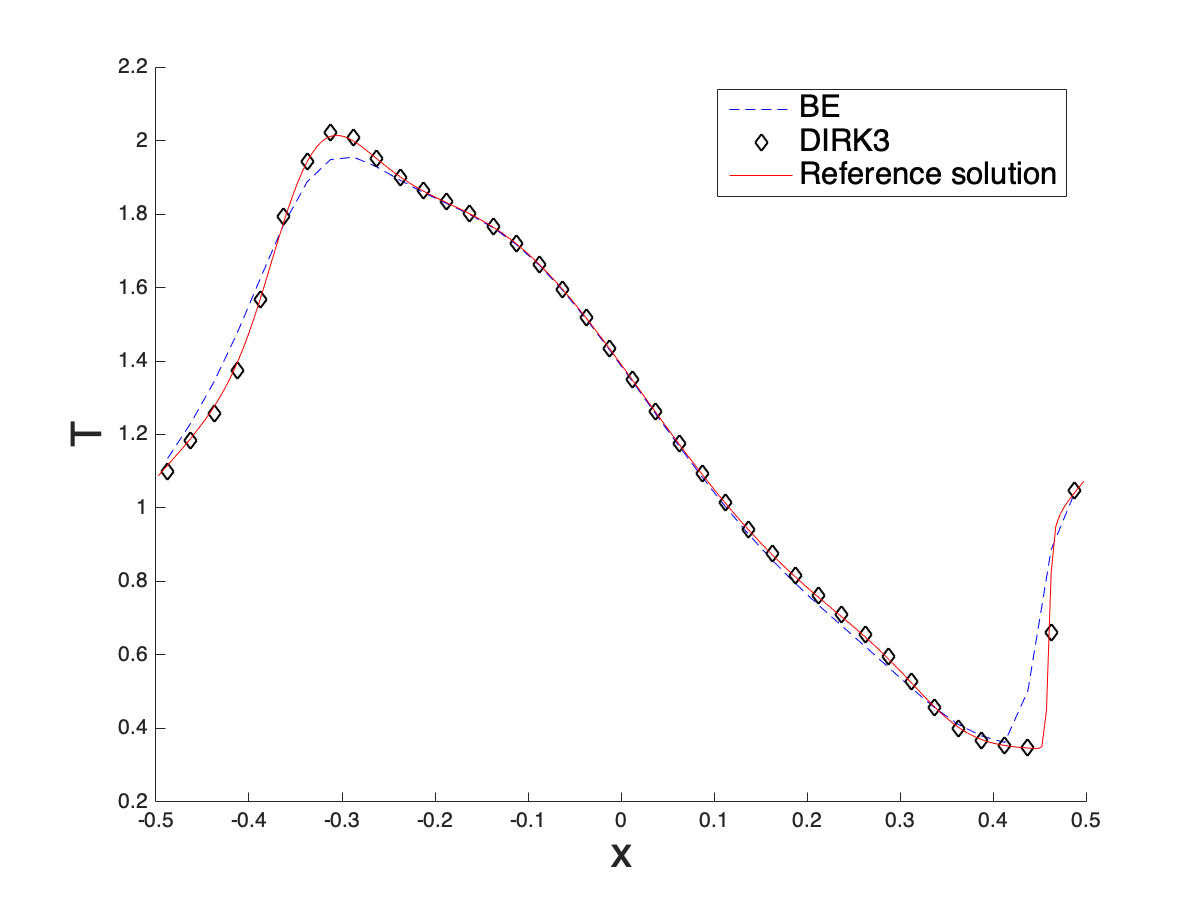}
\includegraphics[width=2.1in]{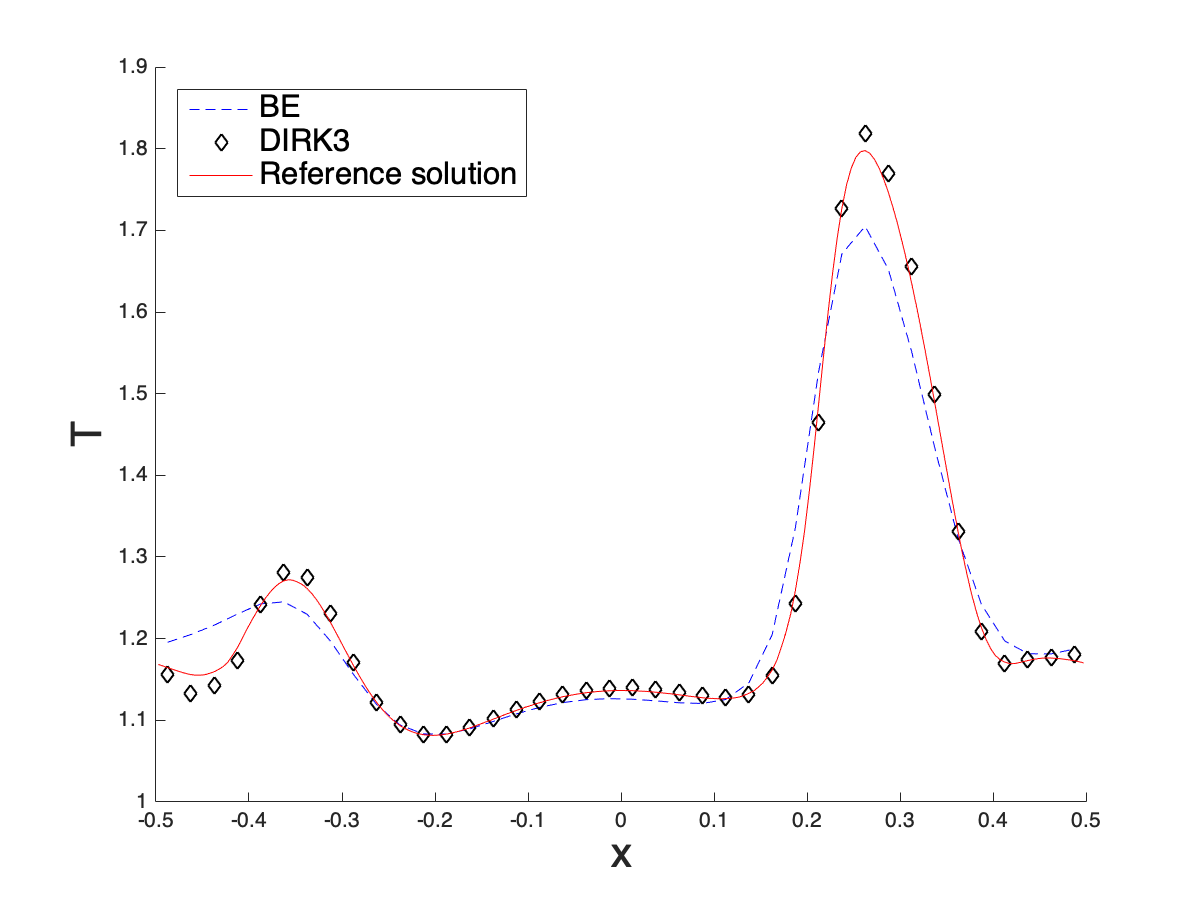}
\caption{Mixed regime problem with $\ep(x)$ in \eqref{eq:var_ep} with $a_0 = 11$. $N_x = 40$. $CFL = 4.0$. $P^2$ SL NDG method with LMPP limiter~\eqref{eq:mpp_limiter} is applied. Solid line: reference solution computed by the hierarchical NDG3-IMEX scheme with $N_x = 200$ and $N_v = 200$. From left to right: simulation time $t = 0.1, 0.3, 0.45$. From top to bottom: the density $\rho$, mean velocity $v$ and temperature $T$.}
\label{fig:a11}
\end{figure}

\begin{table} [!htbp]\scriptsize
\centering
\begin{tabular}{|c |cc | cc | cc|}
\hline
\multicolumn{1}{|c|}{$V^2_h$}&\multicolumn{2}{|c|}{BE}&\multicolumn{2}{|c|}{DIRK2}&\multicolumn{2}{|c|}{DIRK3}\\
\hline
 $N_x$ & {$L^1$ error} & Order&{$L^1$ error} & Order&{$L^1$ error} & Order\\
\hline
40 &     3.66E-04 & &     7.10E-05 & &  2.71E-05 & \\     
80 &     1.85E-04 &     0.98 &1.37E-05 &     2.38 &3.57E-06 &     2.93	\\
160 &     9.22E-05 &     1.01 &3.15E-06 &     2.12 &5.63E-07 &     2.67	\\
320 &     4.58E-05 &     1.01 &7.88E-07 &     2.00 &1.29E-07 &     2.13	\\
640 &     2.28E-05 &     1.01 &2.11E-07 &     1.90 &4.42E-08 &     1.54	\\
\hline
\end{tabular}
\caption{$L^1$ errors and orders of the mixed regime problem with $\ep(x)$ in \eqref{eq:var_ep} and $a_0 = 11$ at $t = 0.001$. $P^2$ SL NDG method and $CFL = 0.1$ are used.}
\label{tab:test4_spatial_order}
\end{table}

\begin{table} [!htbp] \scriptsize
\centering
\bigskip
\begin{tabular}{|c | c | c | c | c | c | c| }
\hline
\multicolumn{4}{|c|}{DIRK2}&\multicolumn{3}{|c|}{DIRK3}\\
\hline
\multicolumn{1}{|c|}{$N_v$}&\multicolumn{1}{|c|}{$\rho$}&\multicolumn{1}{|c|}{$\rho u$}&\multicolumn{1}{|c|}{$E$}&\multicolumn{1}{|c|}{$\rho$}&\multicolumn{1}{|c|}{$\rho u$}&\multicolumn{1}{|c|}{$E$}\\
\hline
    30 &     2.38E-04 &     2.59E-14 &     2.58E-04 & 2.38E-04 &     2.76E-14 &     2.58E-04 \\
   100 &     3.09E-14 &     9.79E-15 &     2.82E-14 &3.20E-14 &     9.65E-15 &     2.81E-14 \\
\hline
\end{tabular}
\caption{Conservation test of the macroscopic fields $U$ for the mixed regime problem~\ref{exa:variable_ep} with $a_0 = 11$ using varying $N_v$ at $t = 0.1$. $CFL = 4.0, N_x = 80\ \text{and}\ P^2$ SL NDG method with LMPP limiter~\eqref{eq:mpp_limiter} are used.}
\label{tab:conser_epx}
\end{table}

\begin{figure}[htbp]
\centering
\includegraphics[width=2.1in]{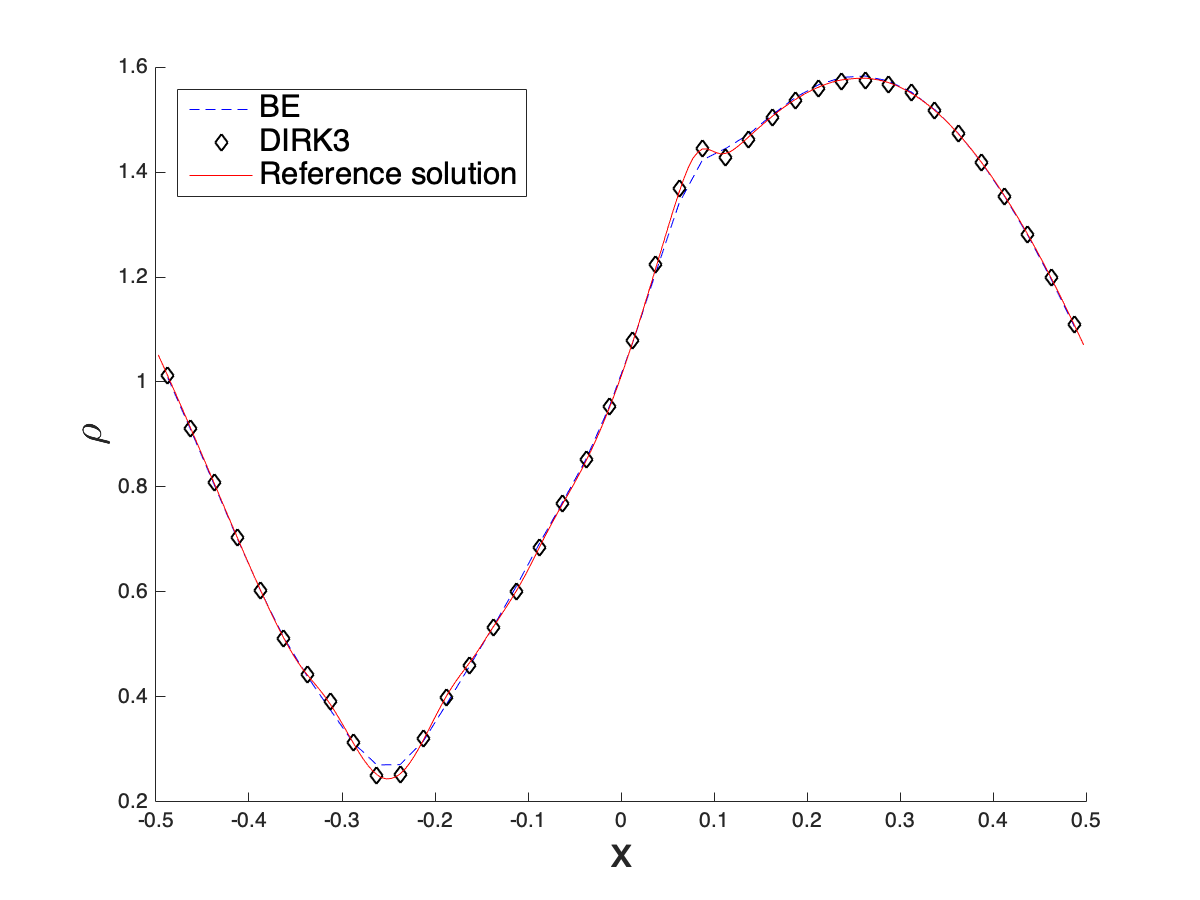}
\includegraphics[width=2.1in]{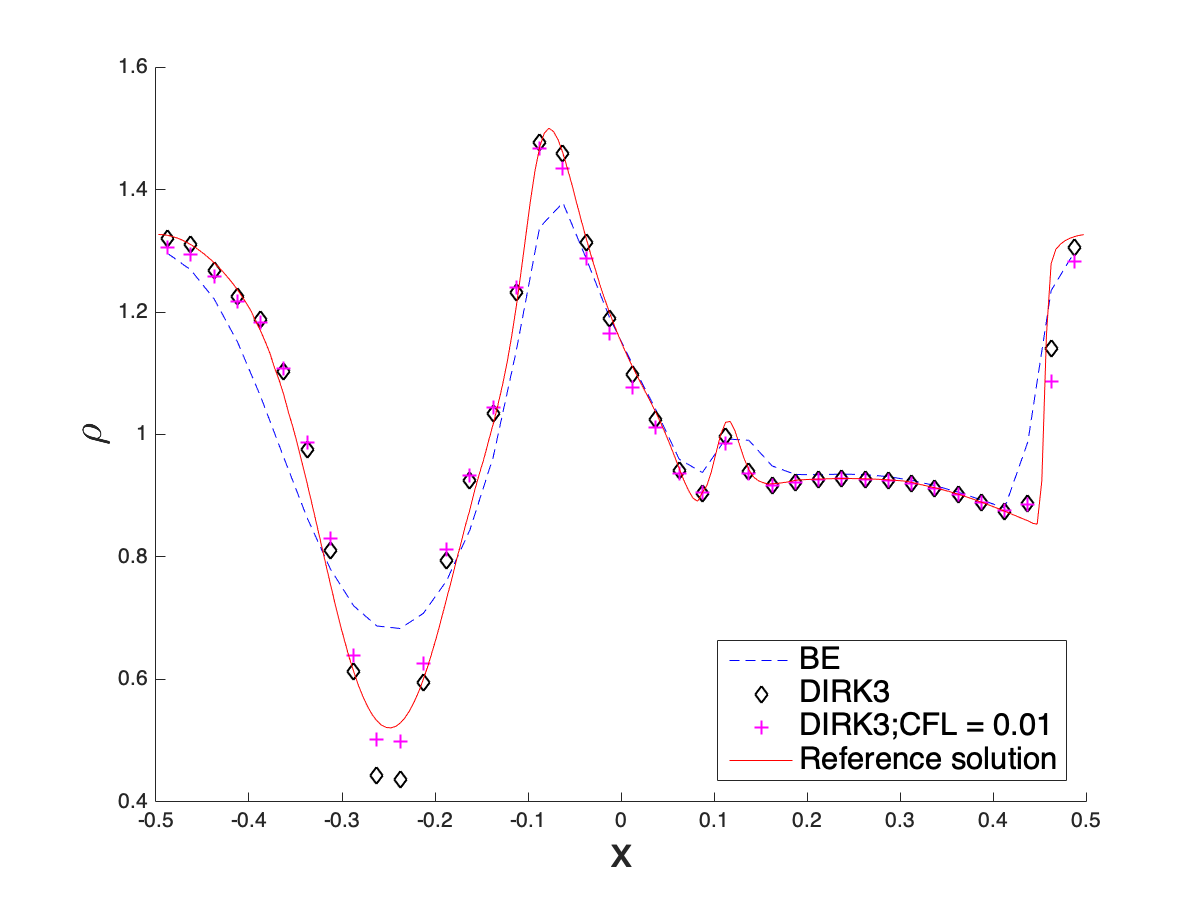}
\includegraphics[width=2.1in]{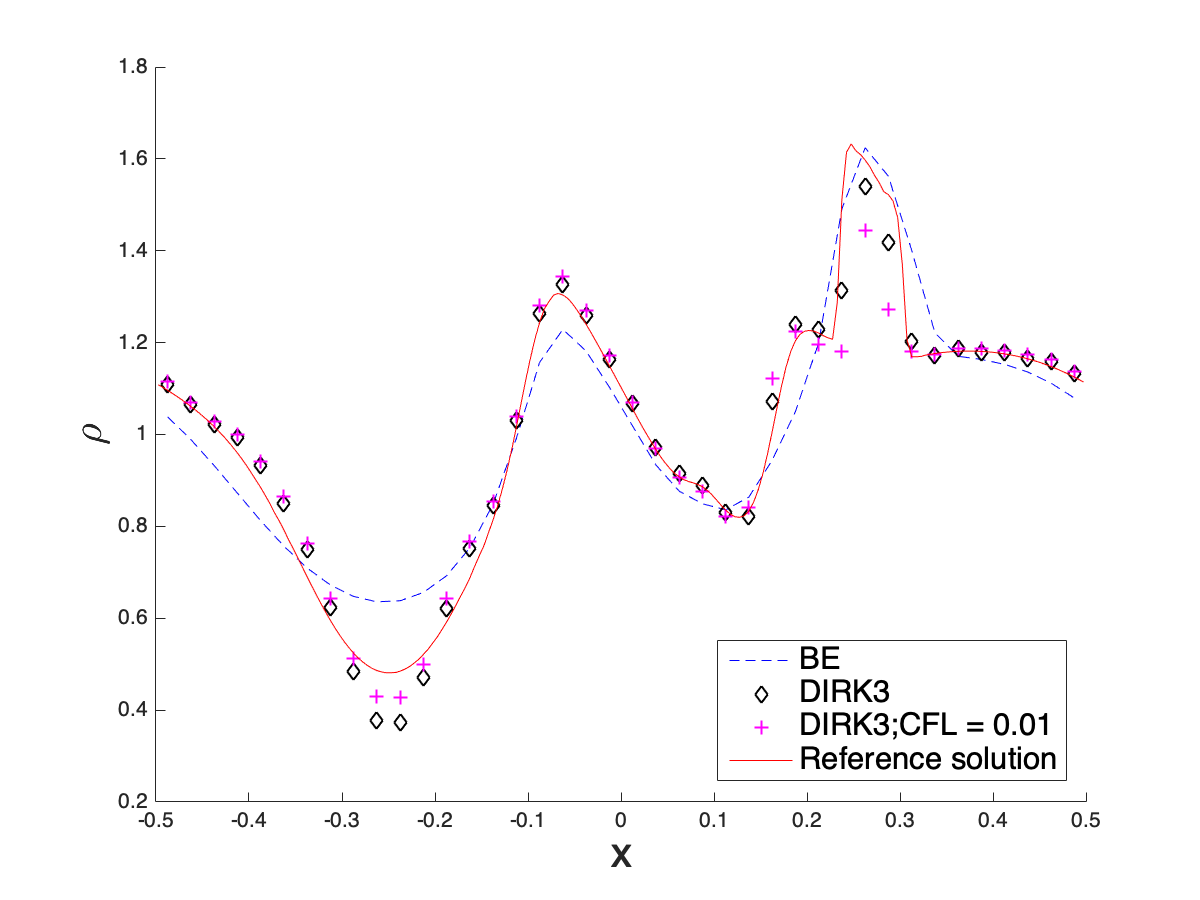}

\vspace{0.5cm}
\includegraphics[width=2.1in]{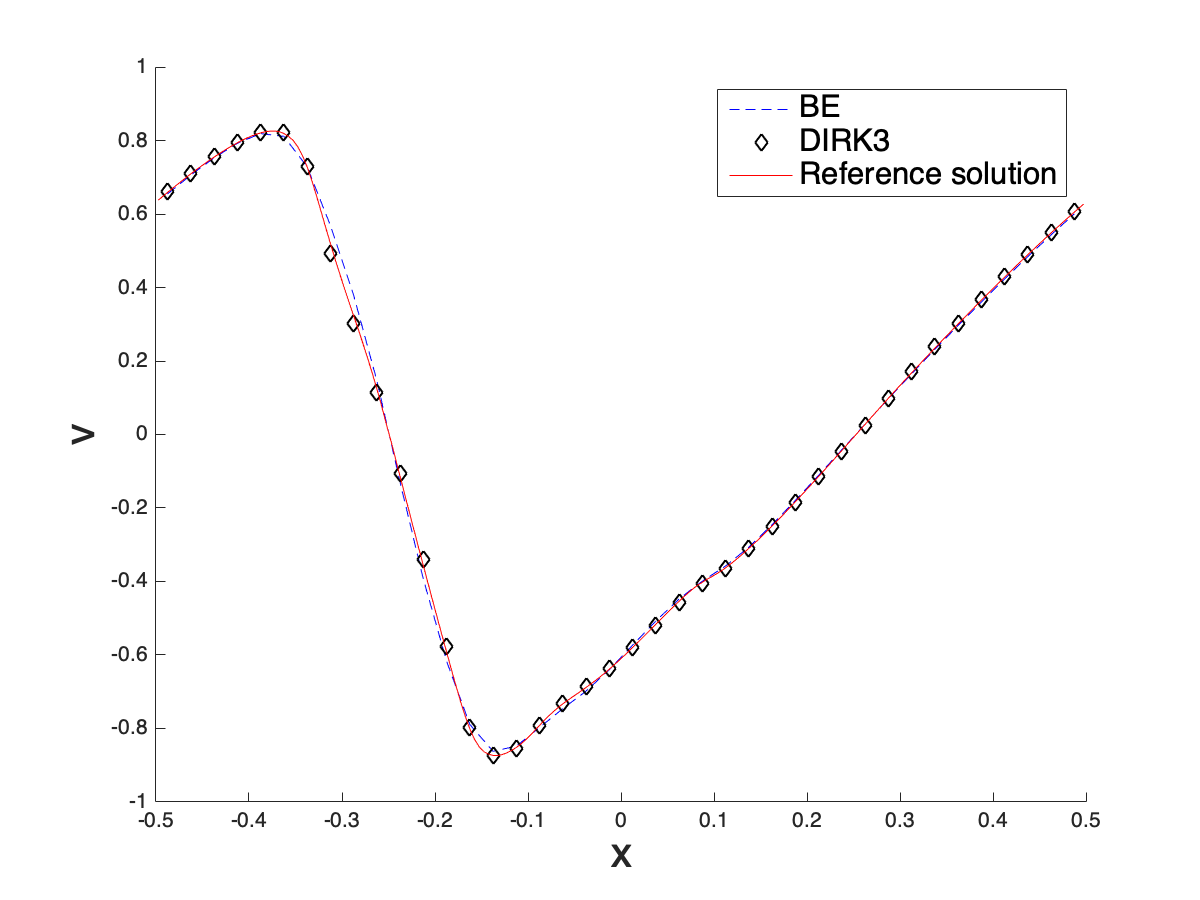}
\includegraphics[width=2.1in]{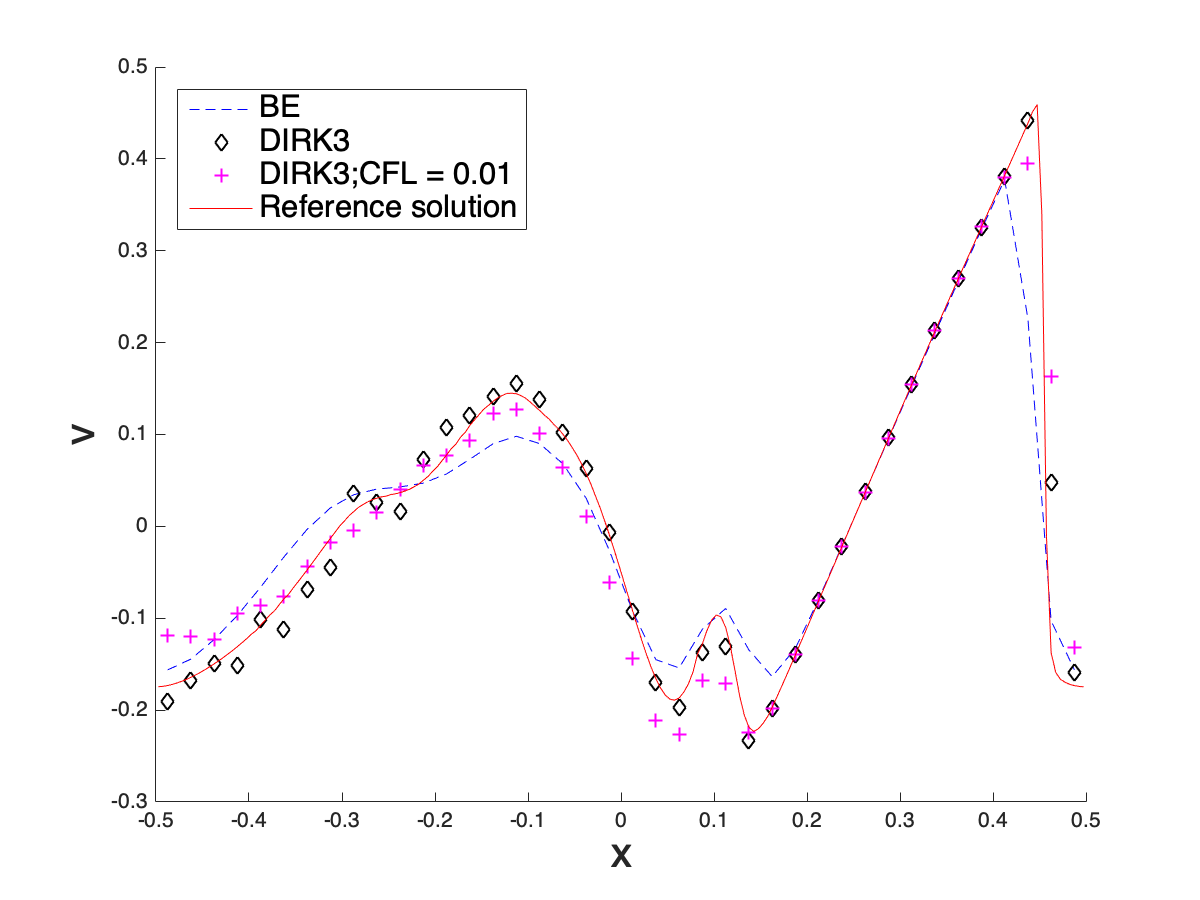}
\includegraphics[width=2.1in]{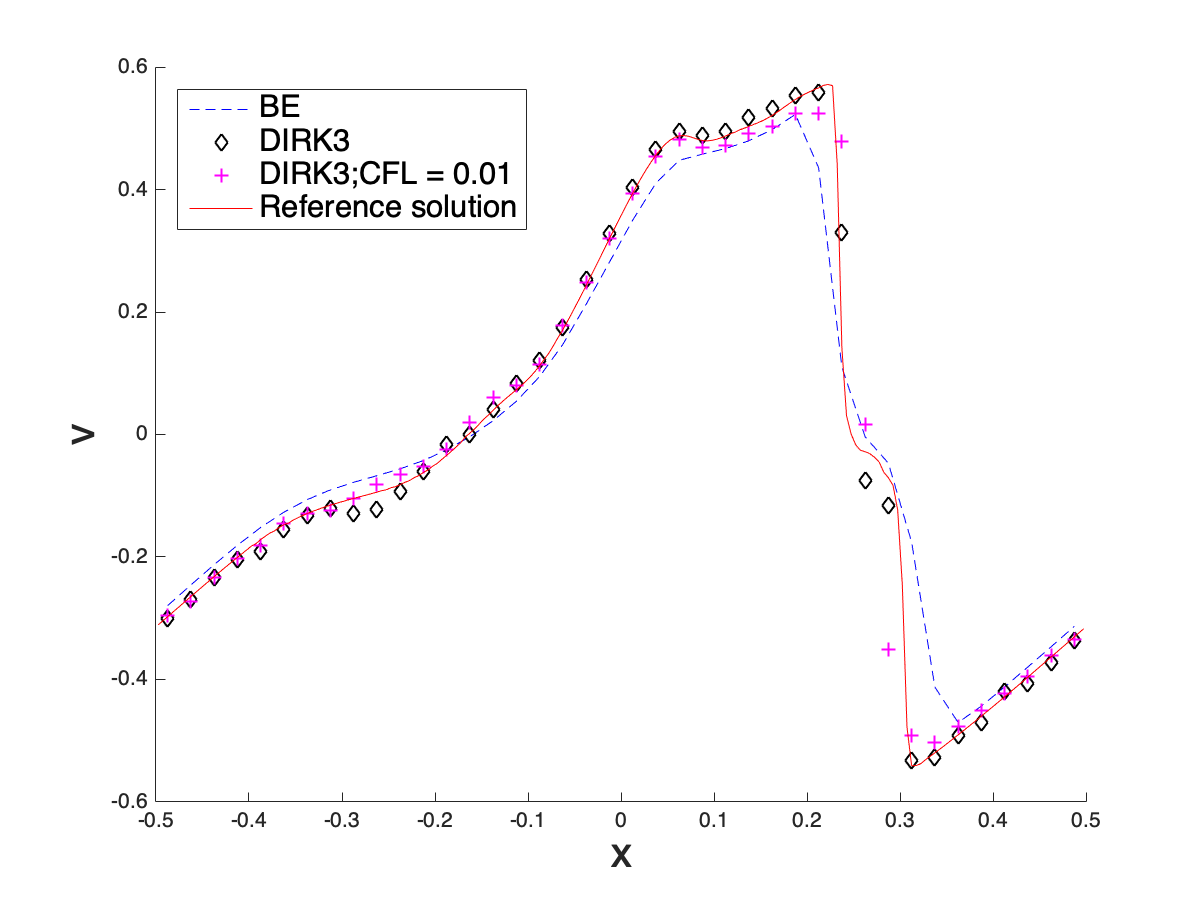}

\vspace{0.5cm}
\includegraphics[width=2.1in]{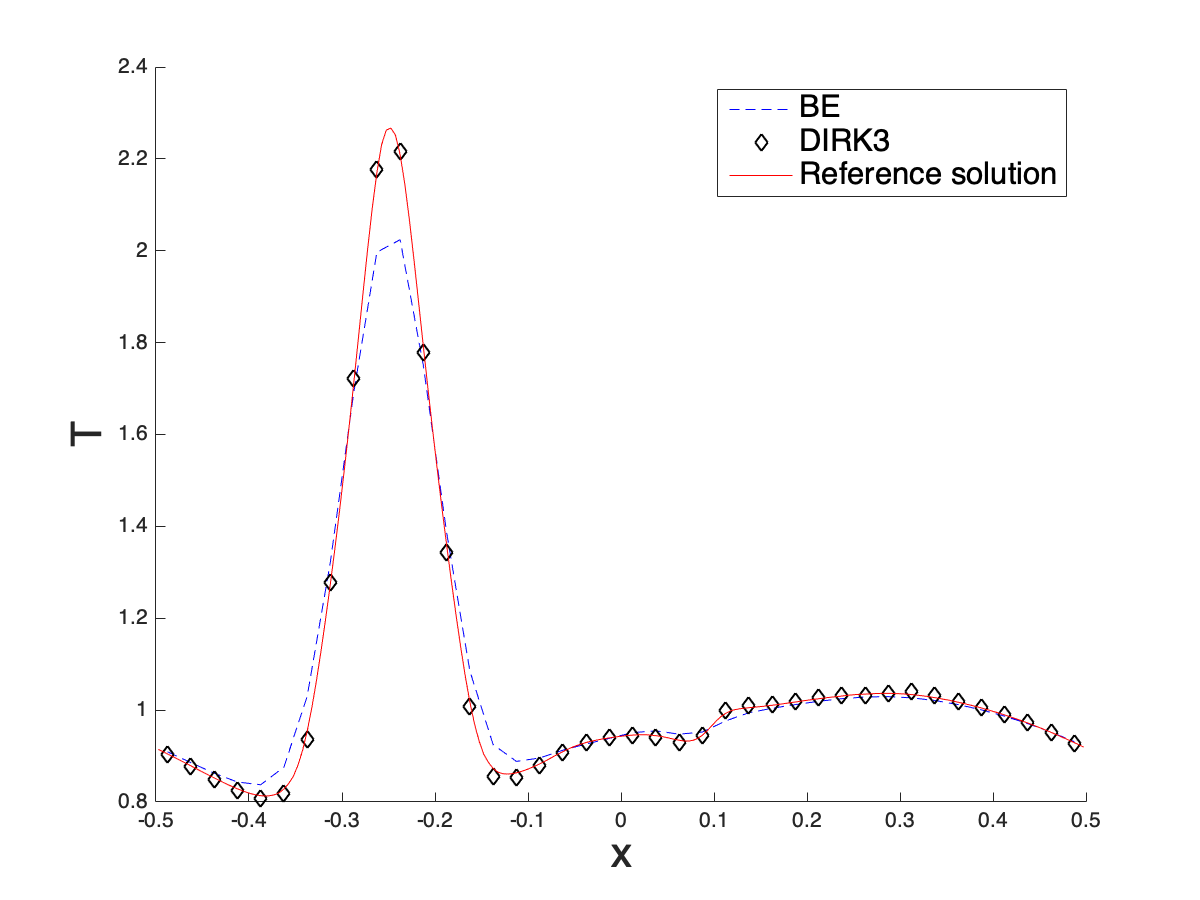}
\includegraphics[width=2.1in]{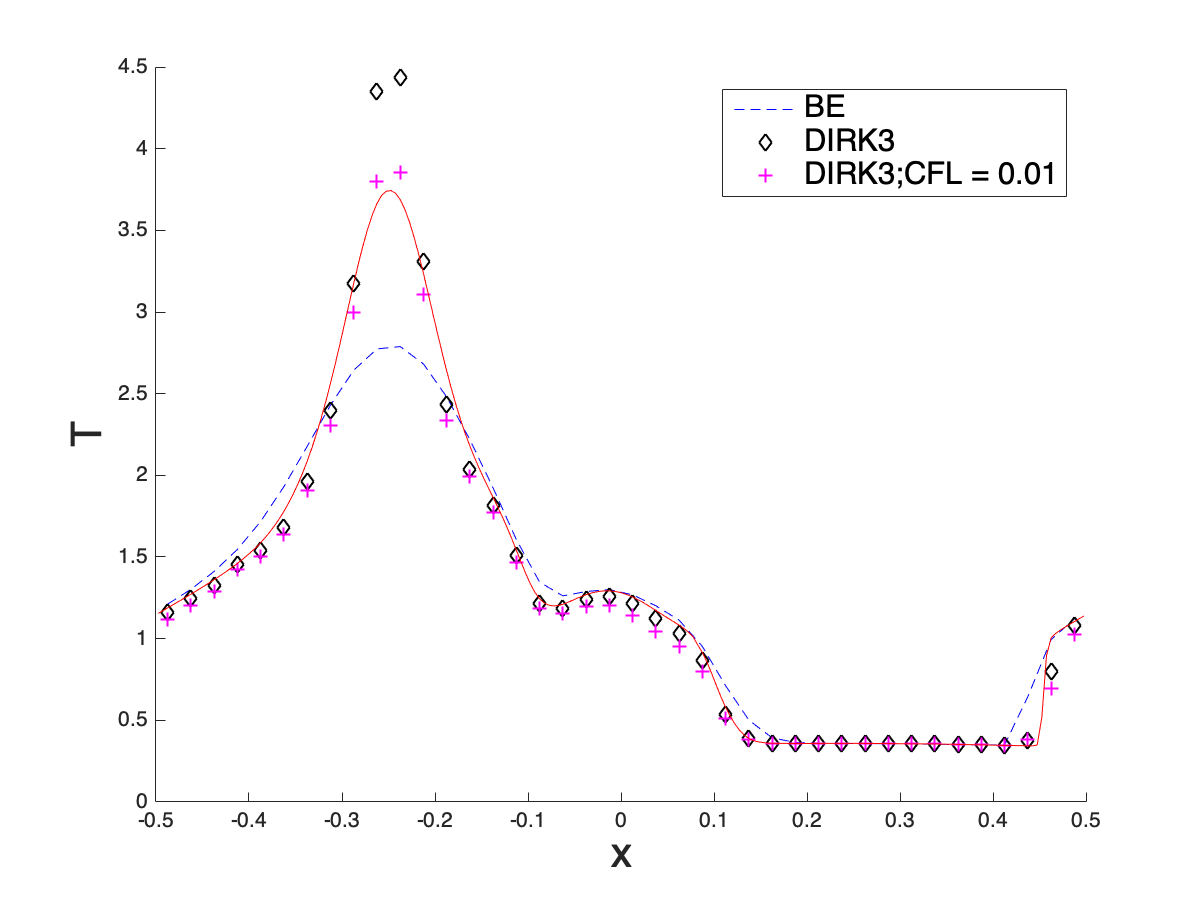}
\includegraphics[width=2.1in]{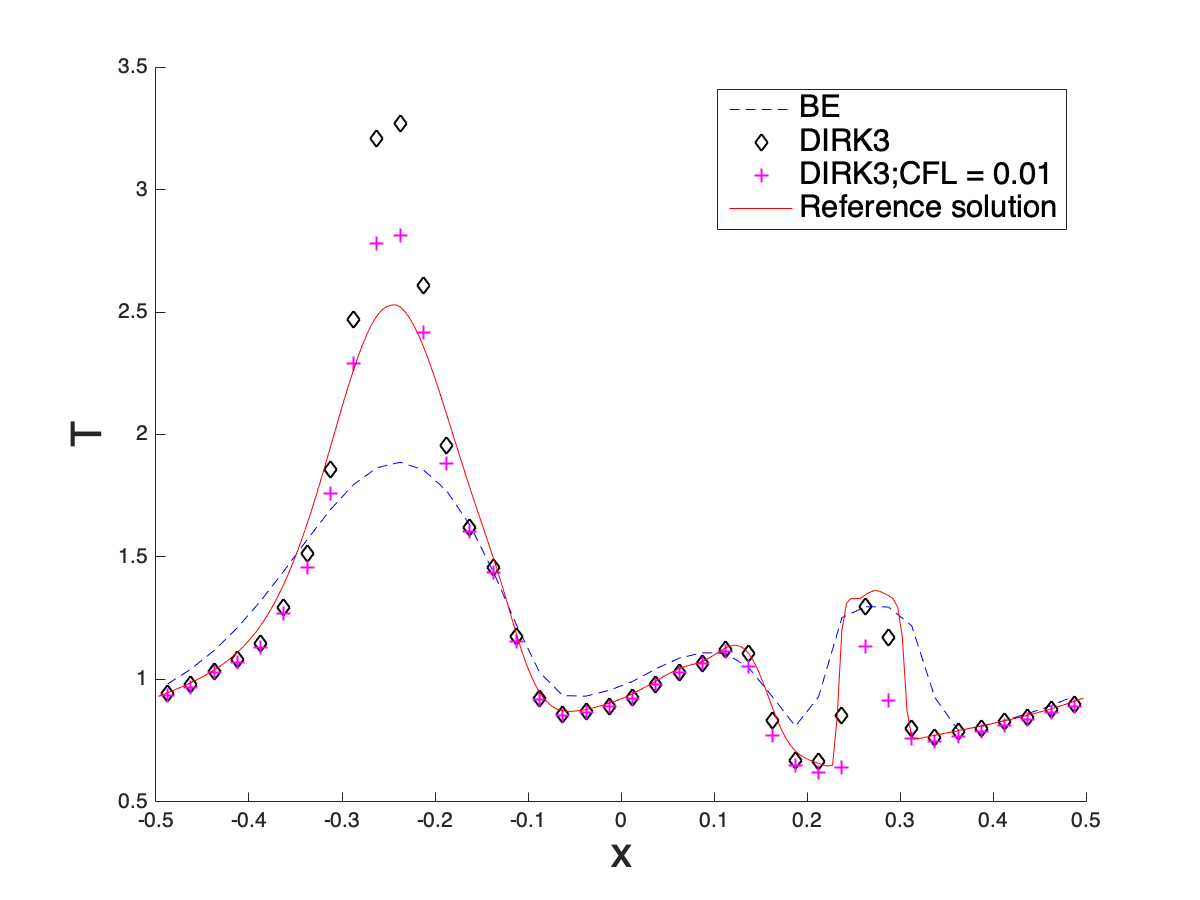}
\caption{Mixed regime problem with $\ep(x)$ in \eqref{eq:var_ep} with $a_0 = 40$. $P^2$ SL NDG method with LMPP limiter~\eqref{eq:mpp_limiter} is applied on $N_x = 40$ using $CFL = 4.$. Solid red line: reference solution computed by the hierarchical NDG3-IMEX scheme with $N_x = 200$ and $N_v = 200$. From left to right: simulation time $t = 0.1, 0.3, 0.45$. From top to bottom: the density $\rho$, mean velocity $v$ and temperature $T$.}
\label{fig:a40}
\end{figure}

\end{exa}

\section{Conclusions}	
\label{sec6}
\setcounter{equation}{0}

In this paper, we developed a semi-Lagrangian (SL) nodal discontinuous Galerkin (NDG) scheme for solving the BGK model. In the proposed method, the nodal DG solution of the linear transport term is evolved along the characteristics using an efficient SL NDG solver combined with a local maximum principle preserving (LMPP) limiter; while the BGK relaxation operator is treated with diagonally implicit Runge-kutta (DIRK) methods proposed in \cite{ding2021semi} along characteristics. The high spatial and temporal order of accuracy, the conservation of macroscopic fields and the AP property are verified via numerical experiments. 
So far, we only consider the 1D1V BGK model with periodic or free-flow boundary conditions. The planned future work includes the extension to high dimensional model with more general boundary conditions. 

\section*{Acknowledgement}

The authors would like to thank Dr. Tao Xiong from School of Mathematical Sciences, Xiamen University, for providing the reference solutions by the NDG-IMEX scheme in \cite{xiong2017hierarchical} so that we are able to verify the behavior of our method for test~\ref{exa:variable_ep}.

\section*{Appendix: Butcher Tableaus of DIRK methods}

\setcounter{table}{0}
\renewcommand{\thetable}{A\arabic{table}}

\noindent 
{\bf {Classical 2-stage DIRK2 and 3-stage DIRK3 methods:}}

\begin{table}[!ht]	
\centering
\begin{tabular}{c	|		c	c   c}
$\nu$	&			&	$\nu$ 	&	0	\\
1		&			&	$1-\nu$ 	& $\nu$	\\
\hline
		&			&	$1-\nu$ 	& $\nu$
\end{tabular}
, \qquad $\nu = 1-\sqrt{2}/2$.
\caption{DIRK2.}
\label{tab_dirk2}
\end{table}

\noindent
{\bf {$4$-stage DIRK3 method in \cite{ding2021semi}}}

\begin{table}[!htbp] \small
\renewcommand{\arraystretch}{1.3}
\centering
\begin{tabular}{c|c c c c}
$\f{1}{4}$			&	$\f{1}{4}$			&				&				&				\\
$\f{11}{28}$		&	$\f{1}{7}$			&	$\f{1}{4}$		&				&				\\
$\f{1}{3}$			&	$\f{61}{144}$		&	$-\f{49}{144}$	&	$\f{1}{4}$		&				\\
1				&	0				&	0			&	$\f{3}{4}$		&	$\f{1}{4}$		\\
\hline
				&	0				&	0			&	$\f{3}{4}$		&	$\f{1}{4}$		\\
\end{tabular}
\caption{$4$-stage DIRK3}
\label{tab:rw_9}
\end{table}

\bibliographystyle{siam}
\bibliography{refer}

\end{document}